\documentclass[a4paper,11pt]{article}
\usepackage[english]{babel}
\usepackage[T1]{fontenc}
\usepackage{lmodern}
\usepackage[utf8]{inputenc}
\usepackage[all]{xy}
\usepackage{amsmath}
\usepackage{amsfonts}
\usepackage{amssymb}
\usepackage{enumerate}
\usepackage{amsthm}
\usepackage{multirow}
\usepackage{graphicx}
\usepackage{epsfig}
\usepackage{fancybox}
\usepackage{moreverb}
\usepackage{textcomp}
\usepackage{pifont}
\usepackage{float}

\usepackage{hyperref}
\usepackage[top=2cm,bottom=3cm,left=3cm,right=3cm]{geometry}

\title{Realizing  metrics of curvature $\leq -1$ on closed surfaces in Fuchsian anti-de Sitter manifolds}
 
\author{Hicham Labeni}

\begin{document}

\setcounter{secnumdepth}{2}
\setcounter{tocdepth}{3}

\newtheorem{definition}{Definition}[section]
\newtheorem{lemma}[definition]{Lemma}
\newtheorem{sublemma}[definition]{Sublemma}
\newtheorem{corollary}[definition]{Corollary}
\newtheorem{proposition}[definition]{Proposition}
\newtheorem{theorem}[definition]{Theorem}
\newtheorem{fact}[definition]{Fact}
\newtheorem{question}[definition]{Question}
\newtheorem{remark}[definition]{Remark}
\newtheorem{example}[definition]{Example}
\newtheorem{assumption}[definition]{Assumption}

\newcommand{\cov}{\mathrm{covol}}
\def \tr{{\mathrm{tr}}}
\def \det{{\mathrm{det}\;}}
\def\co{\colon\tanhinspace}
\def\I{{\mathcal I}}
\def\N{{\mathbb N}}
\def\R{{\mathbb R}}
\def\Z{{\mathbb Z}}
\def\Sph{{\mathbb S}}
\def\Tor{{\mathbb T}}
\def\Disk{{\mathbb D}}
\def\Hess{\mathrm{Hess}}
\def\rad{\mathbf{v}}

\def\H{{\mathbb H}}
\def\RP{{\mathbb R}{\mathrm{P}}}
\def\dS{{\mathrm d}{\mathbb{S}}}
\def\Isom{\mathrm{Isom}}

\def\pr{\mathrm{pr}}

\def\sh{\mathrm{sinh}\,}
\def\ch{\mathrm{cosh}\,}
\newcommand{\arccosh}{\mathop{\mathrm{arccosh}}\nolimits}
\newcommand{\oh}{\overline{h}}

\newcommand{\mf}{\mathfrak}
\newcommand{\mb}{\mathbb}
\newcommand{\ol}{\overline}
\newcommand{\la}{\langle}
\newcommand{\ra}{\rangle}
\newcommand{\hess}{\mathrm{Hess}\;}
\newcommand{\grad}{\mathrm{grad}}
\newcommand{\M}{\mathrm{MA}}
\newcommand{\II}{\textsc{I\hspace{-0.05 cm}I}}
\renewcommand{\d}{\mathrm{d}}
\newcommand{\A}{\mathrm{A}}
\renewcommand{\L}{\mathcal{L}}
\newcommand{\FB}[1]{{\color{red}#1}}
\newcommand{\note}[1]{{\color{blue}{\small #1}}}
\newcommand{\Area}{\mathrm{Area}}

\setlength{\abovedisplayshortskip}{1pt}
\setlength{\belowdisplayshortskip}{3pt}
\setlength{\abovedisplayskip}{3pt}
\setlength{\belowdisplayskip}{3pt}

\newtheorem{exemple}{\textbf{Exemple}}[section]

\newtheorem{lemme}{\textbf{Lemma}}[section]

\newtheorem{corollaire}{\textbf{Corollaire}}[section]

\maketitle

\begin{abstract}We  prove that any metric with curvature $\leq -1$  (in the sense of A. D. Alexandrov) on a closed surface of genus $>1$ is isometric to the induced intrinsic metric on a space-like convex surface in a Lorentzian manifold of dimension $(2+1)$ with sectional curvature $-1$.
The proof is done by approximation, using a result about isometric immersion of smooth metrics by Labourie--Schlenker.
 \end{abstract}







\section{Introduction} 

In the following, $S$ is a closed connected oriented surface. 
When we speak about a metric with curvature $\leq k$ or $\geq k$, this means that $S$ is endowed with a distance $d$ satisfying 
a curvature bound in the sense of A.D. Alexandrov, see e.g. \cite{BurBur} or Section~\ref{sec:approx smooth}.
This metric notion of curvature bound was initially introduced in the $40'$s to characterize the induced metric on the boundary of convex bodies of the Euclidean space \cite{AL06}. (In the present article, the word metric is used for distance, and \emph{induced metric} means the induced intrinsic distance.) While introducing this seminal notion, Alexandrov proved the following statement.

\begin{theorem}\label{1.1}
Let $d$ be a metric of curvature $\geq 0$ on the sphere $S$. Then there exists  a convex surface in the Euclidean space whose induced  metric is isometric to $(S,d)$.
\end{theorem}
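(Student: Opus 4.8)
\medskip
\noindent The plan is to prove Theorem~\ref{1.1} by approximation, along a scheme that also serves as a blueprint for the body of the paper: replace the metric $d$ by ``tame'' metrics of curvature $\geq 0$ for which a realization is already available, realize each of them in $\mathbb{R}^3$, and extract a limiting convex surface. For the tame class I would take, say, the polyhedral metrics of curvature $\geq 0$ (flat away from finitely many cone points, each of cone angle $\leq 2\pi$); one fixes a fine geodesic triangulation of $(S,d)$ and replaces every geodesic triangle by the Euclidean triangle with the same three side lengths, glued along the same combinatorics. Since $(S,d)$ has curvature $\geq 0$, comparison angles never exceed the true angles, and the total angle of a surface of curvature $\geq 0$ at any point is $\leq 2\pi$; hence every cone angle of the resulting polyhedral metric $d_n$ is $\leq 2\pi$, so $d_n$ again has curvature $\geq 0$, and as the mesh shrinks the $d_n$ converge uniformly to $d$. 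Each $d_n$ is then realized as the induced metric of the boundary of a (possibly degenerate) convex polytope $K_n\subset\mathbb{R}^3$ by Alexandrov's polyhedral realization theorem --- itself proved by a continuity / invariance-of-domain argument resting on the infinitesimal rigidity of convex polytopes. (One could equally take the $d_n$ to be smooth metrics of Gauss curvature $>0$ approximating $d$, via Alexandrov's smoothing theorem, and realize them through the solution of Weyl's problem by Nirenberg and Pogorelov; this variant parallels the use of Labourie--Schlenker made later in the paper.)

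To pass to the limit: the extrinsic diameter of $K_n$ is at most its intrinsic diameter $\operatorname{diam}(S,d_n)$, which converges to $\operatorname{diam}(S,d)<\infty$; translating so that a fixed point lies at the origin, all $K_n$ lie in a common ball, and by Blaschke's selection theorem a subsequence converges in Hausdorff distance to a nonempty compact convex set $K$. Since $d$ is a genuine surface metric on $S$, $K$ is not a point or a segment: it is either a convex body with nonempty interior or the ``double'' of a planar convex region, and in both cases $\partial K$ is a convex surface. Parametrizing all $\partial K_n$ and $\partial K$ over $S$ --- for instance by radial projection from an interior point common to $K$ and to all $K_n$ with $n$ large --- one shows that Hausdorff convergence $K_n\to K$ forces convergence of the induced intrinsic metrics; hence the induced metric of $\partial K$ equals $\lim_n d_n=d$, which proves the statement.

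The main obstacle is this last step: the continuity of the induced intrinsic metric under Hausdorff convergence of convex bodies, together with the control of the possible degeneration of $K_n$. The lower bound --- that the limiting intrinsic metric dominates $d$ --- is elementary, but the matching upper bound rests on the nontrivial fact that shortest paths on nearby convex surfaces stay uniformly close, so that no length is lost in the limit. This stability of the intrinsic geometry of convex surfaces under Hausdorff approximation is exactly the phenomenon whose anti-de Sitter counterpart forms the technical heart of the present article.
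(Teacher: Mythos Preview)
The paper does not prove Theorem~\ref{1.1}: it is quoted in the introduction as Alexandrov's classical result, with a reference to \cite{AL06}, and serves only as historical motivation for Theorem~\ref{1.4}. So there is no ``paper's own proof'' to compare your proposal against.

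That said, your sketch is a faithful outline of Alexandrov's original argument in \cite{AL06}: approximate $d$ by polyhedral metrics of nonnegative curvature via fine geodesic triangulations, realize each as a convex polytope by the polyhedral realization theorem, and pass to a Hausdorff limit using Blaschke selection together with the stability of intrinsic metrics on convex surfaces. You are also right that this scheme is the Euclidean prototype for the strategy of the present paper, and your parenthetical remark about the smooth variant (Weyl problem via Nirenberg--Pogorelov) is the exact analogue of the use of Theorem~\ref{ScH}. The one point worth flagging is that the hard step you isolate --- continuity of the induced intrinsic metric under Hausdorff convergence --- is itself a substantial theorem of Alexandrov, not a lemma one can expect to supply on the spot; in \cite{AL06} it occupies a full chapter, so in an actual write-up you would cite it rather than attempt to reprove it.
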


Theorem~\ref{1.1} was generalized in many ways. Some of them are contained in the following statement, see the introduction of \cite{fgi}
for details.

 \begin{theorem}\label{gen riem}
Let $k\in \mathbb{R}$ and let $d$ be a metric of curvature $\geq k$ on a closed surface $S$. Then there exists a  Riemannian manifold $R$ homeomorphic to $S\times\mathbb{R}$ of constant sectional curvature $k$ which contains a convex surface whose induced  metric is isometric to $(S,d)$
\end{theorem}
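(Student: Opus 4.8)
The plan is to reduce the statement to the case of \emph{smooth} Riemannian metrics, apply the appropriate Weyl-type isometric embedding theorem in each case, and then recover the general case by a limiting argument; throughout, the topology of $S$ is constrained by $k$ via Gauss--Bonnet. Indeed, since $\int_S K\,dA=2\pi\chi(S)$, a metric of curvature $\geq k$ forces $S\cong\Sph^2$ when $k>0$, forces $S\cong\Sph^2$ or $\Tor^2$ when $k=0$, and allows any genus when $k<0$. Moreover, $d$ is a uniform — hence Gromov--Hausdorff — limit of smooth Riemannian metrics $g_n$ on $S$ with Gauss curvature $K_{g_n}>k_n$ for some sequence $k_n\nearrow k$: such $g_n$ are produced by smoothing $d$ (standard smoothing preserves a lower curvature bound up to an arbitrarily small loss, which is absorbed into $k-k_n$), or, following Alexandrov, by approximating $d$ by convex polyhedral metrics and smoothing those. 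It therefore suffices to realize each $g_n$ — with uniform geometric control — and to pass to the limit.

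\emph{Realization of the smooth metrics.} For $k_n\geq 0$ and $S=\Sph^2$, the solution of the Weyl problem and its spherical counterpart (Weyl, Lewy, Nirenberg, Alexandrov, Pogorelov) yields a smooth strictly convex embedding of $(\Sph^2,g_n)$ into the simply connected space form $M^3_{k_n}$; for $k_n<0$ and $S=\Sph^2$, the Alexandrov--Pogorelov theorem gives the same in $\H^3_{k_n}$. For $k_n<0$ and genus $>1$, the equivariant (``Fuchsian'') version of the Weyl problem (Gromov; Labourie; Schlenker), which is the Riemannian analogue of the embedding result quoted in the abstract, gives a $\pi_1(S)$-equivariant convex embedding of the universal cover into $\H^3_{k_n}$, that is, a convex surface in a hyperbolic manifold of curvature $k_n$ homeomorphic to $S\times\R$; the torus cases ($k_n=0$, realized totally geodesically in a flat $\Tor^2\times\R$; $k_n<0$, realized equivariantly in a cusped hyperbolic $\Tor^2\times\R$) are covered by the corresponding known results. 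By the Gauss equation the determinant of the second fundamental form of the smooth realization $\Sigma_n$ equals $K_{g_n}-k_n>0$, so $\Sigma_n$ is non-degenerate; in the simply connected cases it bounds a convex body with nonempty interior, and deleting one point from that interior — and, only when the ambient space is $\Sph^3$, also one point from the other complementary component — produces a manifold $R_n$ of constant curvature $k_n$, homeomorphic to $S\times\R$, still containing $\Sigma_n$. In every case we obtain a constant-curvature-$k_n$ manifold $R_n\cong S\times\R$ and a convex surface $\Sigma_n\subset R_n$ whose induced intrinsic metric is isometric to $g_n$.

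\emph{Passage to the limit, and the main obstacle.} From $K_{g_n}>k_n$, $\mathrm{diam}(S,g_n)\to\mathrm{diam}(S,d)<\infty$ and $0<\Area(S,g_n)\to\Area(S,d)<\infty$ one derives, via Gauss--Bonnet and Toponogov comparison, uniform upper bounds on the diameters of the $\Sigma_n$ (and, in the genus $>1$ case, two-sided bounds on the Fuchsian holonomies modulo conjugacy, preventing the quotients from collapsing or escaping to infinity). Normalizing the model space forms to a single one and invoking a Blaschke-type selection theorem for convex sets, a subsequence of the $\Sigma_n$ converges in the Hausdorff sense to the boundary $\Sigma_\infty$ of a convex body — possibly lower-dimensional, in which case $\Sigma_\infty$ is understood as the associated doubled region — inside a constant-curvature-$k$ manifold $R_\infty\cong S\times\R$. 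For convex surfaces, Hausdorff convergence upgrades to uniform convergence of the induced length metrics (collapse to zero area being excluded by the area lower bound), so the induced metric on $\Sigma_\infty$ is isometric to $\lim_n g_n=d$, and $(R_\infty,\Sigma_\infty)$ is the desired realization. The delicate points — where I expect most of the work to concentrate — are precisely these: the a priori diameter and area estimates and the ensuing compactness, including the non-degeneracy of $R_\infty$ in the non-simply-connected cases; the coherent choice of the deleted points (equivalently, of the ambient $R_n$) ensuring that $R_\infty\cong S\times\R$; and the stability statement that the Alexandrov curvature bound and the identification of the induced metric pass to the Hausdorff limit, rather than producing a metric that merely dominates, or is dominated by, $d$.
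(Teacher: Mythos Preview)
The paper does not prove Theorem~\ref{gen riem}. It is stated as background---a known generalization of Alexandrov's theorem---and the reader is referred to the introduction of \cite{fgi} for details and attributions. There is therefore no ``paper's own proof'' to compare your attempt to.

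That said, your sketch follows the standard architecture by which such results are obtained (and, in fact, mirrors the strategy the present paper uses for its own Theorem~\ref{1.4} in the Lorentzian $\leq -1$ setting): approximate the Alexandrov metric by smooth metrics satisfying a strict curvature inequality, realize each smooth metric using the appropriate Weyl/Pogorelov/Labourie--Schlenker--type embedding theorem, and extract a convergent subsequence. A few cautions. First, the smoothing step ``standard smoothing preserves a lower curvature bound up to an arbitrarily small loss'' is precisely the nontrivial input here; it is a theorem, not a remark, and the two-dimensional case goes back to Alexandrov's theory of surfaces of bounded integral curvature---you should cite it rather than assert it. Second, your handling of the torus is slightly off: when $k=0$ a metric of curvature $\geq 0$ on $\Tor^2$ is forced by Gauss--Bonnet to be \emph{flat}, so no approximation or limit is needed there; and when $k<0$ with $S=\Tor^2$ the ``cusped hyperbolic $\Tor^2\times\R$'' you invoke is a parabolic (cusp-type) quotient, and the relevant equivariant Weyl problem in that setting is not the Fuchsian one you cite---this case has its own literature and should be treated separately. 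Third, in the genus $>1$ case the compactness of the sequence of Fuchsian representations is exactly the place where a Mumford-type argument is required (as in the present paper's Section~\ref{sec:fuchsian}); your ``two-sided bounds on the Fuchsian holonomies'' is the right target but you have not indicated how the lower bound on translation lengths is obtained from the intrinsic data. These are genuine gaps in the write-up, though not in the overall strategy.
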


In 2017, F. Fillastre and  D. Slutsky proved an analogous results for metrics with curvature bounded from above \cite{FFDS}. 
\begin{theorem}\label{1.2}
Let  $d$ be  a  metric  of   curvature $\leq 0$  on a closed surface $S$ of genus $>1$. Then there exists a flat   Lorentzian manifold  $L$ homeomorphic to $S\times\mathbb{R}$ which  contains  a  space-like  convex  surface whose  induced metric is isometric to $(S,d)$.
\end{theorem}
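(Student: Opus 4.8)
The plan is to prove Theorem~\ref{1.2} by approximation, exactly in the spirit of the anti-de Sitter case treated in this paper. The input from smooth geometry is the smooth analogue of the statement: \emph{a smooth Riemannian metric of strictly negative curvature on a closed surface $S$ of genus $>1$ is isometric to the induced metric on a strictly convex space-like surface in a flat, globally hyperbolic, maximal Lorentzian $3$-manifold homeomorphic to $S\times\R$}, the ambient manifold being essentially unique. This is the Minkowski counterpart of the Labourie--Schlenker isometric immersion theorem, and it can also be read off from Mess's classification of flat GHM spacetimes together with the solution of the associated Minkowski-type problem. The task is then to pass from this smooth statement to a general Alexandrov metric $d$ of curvature $\leq 0$.

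\textbf{Smoothing.} First I would approximate $d$ by a sequence $(h_n)_{n\in\N}$ of smooth Riemannian metrics on $S$ with Gaussian curvature $<0$ whose length metrics $d_{h_n}$ converge uniformly (equivalently, in the Gromov--Hausdorff sense) to $d$. A metric of curvature $\leq 0$ in the sense of Alexandrov is a limit of smooth metrics of curvature $\leq 0$ (mollification in local charts, or a curvature-flow smoothing of Alexandrov surfaces), and a further small perturbation localized near the zero locus of the curvature turns the bound into $<0$; since the genus of $S$ is $>1$ there is no obstruction to doing this. What matters for the sequel is that the $h_n$ are smooth, are close to $d$, and have a \emph{definite} curvature sign, so that the smooth realization theorem applies to each of them.

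\textbf{Smooth realization and set-up.} For each $n$, the smooth theorem provides a flat GHM Lorentzian manifold $L_n\cong S\times\R$ and a convex space-like surface $\Sigma_n\subset L_n$ with induced metric $h_n$. Fixing base points and lifts, $L_n$ is described \`a la Mess by a holonomy $\rho_n\colon\pi_1(S)\to\Isom(\R^{2,1})=\mathrm{SO}_0(2,1)\ltimes\R^{3}$ whose linear part $\ol{\rho}_n$ is a Fuchsian representation and whose translation part is a cocycle $\tau_n$, and $\Sigma_n$ lifts to a $\rho_n$-equivariant convex space-like embedding $\widetilde\Sigma_n\colon\widetilde S\to\R^{2,1}$.

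\textbf{Compactness and passage to the limit.} The crux of the argument, and the step I expect to be the main obstacle, is to show that after extracting a subsequence $\ol{\rho}_n\to\ol{\rho}$ (still Fuchsian), $\tau_n\to\tau$, and $\widetilde\Sigma_n\to\widetilde\Sigma$. Since $d_{h_n}\to d$, the surfaces $(S,h_n)$ have uniformly bounded diameter, area and total curvature and systole bounded away from $0$; comparing the intrinsic geometry of a convex Cauchy surface with the linear holonomy of the ambient flat spacetime then prevents $\ol{\rho}_n$ from degenerating in Teichm\"uller space. For the cocycles $\tau_n$ and the embeddings one needs genuine a priori estimates: a uniform two-sided bound on the principal curvatures of $\Sigma_n$, equivalently a uniform bound on how far $\Sigma_n$ sits from the initial singularity of $L_n$, so that the $\widetilde\Sigma_n$ are uniformly Lipschitz, uniformly space-like, and lie in a fixed compact set modulo the $\rho_n$-action. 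These are exactly the estimates underlying the smooth realization theorem, and they become uniform precisely because the intrinsic metrics converge to the \emph{fixed} limit $d$. Granting this, one gets a flat GHM holonomy $\rho=(\ol{\rho},\tau)$, hence an ambient manifold $L\cong S\times\R$, and a $\rho$-equivariant limit embedding $\widetilde\Sigma$; convexity passes to the limit and uniform space-likeness keeps the limit space-like rather than light-like, so $\widetilde\Sigma$ descends to a convex space-like surface $\Sigma\subset L$. Finally the induced metric on $\widetilde\Sigma$ is the $C^{0}$-limit of the $h_n$, and since $(S,d_{h_n})\to(S,d)$ as metric spaces the induced metric of $\Sigma$ is isometric to $(S,d)$; a short extra argument, using that convex space-like surfaces in flat Lorentzian $3$-manifolds carry metrics of curvature $\leq 0$, rules out any collapse of distances in the limit. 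This completes the proof.
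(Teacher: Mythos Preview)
The paper does not prove Theorem~\ref{1.2}; it is quoted from Fillastre--Slutskiy \cite{FFDS}, and the present paper adapts their method to the anti-de Sitter setting (Theorem~\ref{1.4}). So there is no in-paper proof to compare against directly, but since the structure of \cite{FFDS} is mirrored in Sections~\ref{sec ads}--\ref{sec:approx smooth} here, one can compare your outline to that template.

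Your scheme differs from \cite{FFDS} at two points, and both create genuine gaps. First, \cite{FFDS} works in the \emph{Fuchsian} Minkowski setting: the invariance group is a linear Fuchsian group $\Gamma_n<\mathrm{SO}_0(2,1)$ with no translation cocycle, and the convex surface is the graph of a $\Gamma_n$-invariant convex function over the hyperboloid. Compactness then reduces to compactness of bounded convex functions plus Mumford compactness for $\Gamma_n$ --- exactly the mechanism of Section~\ref{sec:fuchsian} here. In your general GHM set-up you must also control the cocycles $\tau_n$, and the argument you offer does not do this: uniform bounds on the principal curvatures of $\Sigma_n$ do \emph{not} follow from convergence of the intrinsic metrics alone, and bounding the distance of $\Sigma_n$ to the initial singularity is essentially equivalent to bounding $\|\tau_n\|$, which is what you are trying to prove. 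The Fuchsian choice avoids this issue entirely, and the smooth input (the Minkowski case of Labourie--Schlenker, cf.\ Theorem~\ref{ScH}) already lands in that setting.

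Second, your smoothing step is not justified: naive mollification in charts does not preserve an Alexandrov upper curvature bound, and ``curvature-flow smoothing of Alexandrov surfaces'' would need a precise reference. The route in \cite{FFDS}, reproduced in Section~\ref{sec:approx smooth} here, is two-step and elementary: approximate the Alexandrov metric by a flat (resp.\ hyperbolic) cone metric with cone angles $>2\pi$ via the Alexandrov--Zalgaller triangulation argument, then smooth each cone point by a local convex cap in the model space (Minkowski, resp.\ $AdS^3$), which gives strictly negative curvature by the Gauss equation.
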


A natural question is to know if an analog of Theorem~\ref{gen riem} holds for metrics 
 with curvature bounded from above. The case $k=0$ is given by Theorem~\ref{1.2}. In the present article, we solve the case when $k$ is negative. Up to a homothety, this reduces to the case $=-1$. 
So the main result of the present paper is the following theorem. 
 
\begin{theorem}\label{1.4}
Let  $d$ be  a  metric  with  curvature $\leq -1$  on a closed surface $S$ of genus $>1$. Then there exists a Lorentzian manifold  $L$ of sectional curvature $-1$ homeomorphic to $S\times\mathbb{R}$ which  contains  a  space-like  convex  surface whose  induced metric is isometric to $(S,d)$.
\end{theorem}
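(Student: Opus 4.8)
The plan is to prove Theorem~\ref{1.4} by approximation, following the scheme used by Fillastre--Slutsky in the flat case (Theorem~\ref{1.2}) but with the target being Fuchsian anti-de Sitter manifolds rather than flat Lorentzian ones. The starting point is a smoothing statement: given a metric $d$ of curvature $\leq -1$ on $S$, one produces a sequence of smooth Riemannian metrics $g_n$ on $S$, each with (Gaussian) curvature $< -1$ strictly, converging uniformly to $d$. This is the content of Section~\ref{sec:approx smooth}; the mechanism is the standard one for metrics with an upper curvature bound — locally such a metric is a uniform limit of smooth metrics with curvature bounded above, and a partition-of-unity/gluing argument (using that the bound is strict) patches these together globally while keeping curvature $< -1$. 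One must be a little careful that the approximation can be taken $C^0$-close with a genuine strict inequality $K_{g_n} < -1$, since this strictness is what feeds into the smooth realization result.

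\medskip

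\noindent\textbf{Second, apply the Labourie--Schlenker isometric immersion theorem} to each $g_n$. Since $g_n$ is a smooth metric on a surface of genus $>1$ with curvature $<-1$, this result provides an isometric embedding of $(S,g_n)$ as a space-like convex Cauchy surface in a (unique) Fuchsian AdS$_{2+1}$ manifold $L_n$; equivalently, one obtains a Fuchsian representation $\rho_n\co\pi_1(S)\to \Isom(\mathrm{AdS}^3)$ and a $\rho_n$-equivariant convex space-like embedding of the universal cover $\widetilde S$ into $\mathrm{AdS}^3$. The Fuchsian condition means the representation preserves a totally geodesic space-like plane, and its linear part is a Fuchsian representation into $\mathrm{PSL}(2,\R)$; the induced hyperbolic structure $\oh_n$ on $S$ (from that plane) is the relevant modulus. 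So at this stage I have, for each $n$, a hyperbolic surface $(S,\oh_n)$ together with a convex surface in the associated Fuchsian AdS manifold inducing $g_n$.

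\medskip

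\noindent\textbf{Third, pass to the limit.} The key compactness input is that the hyperbolic structures $\oh_n$ stay in a compact part of Teichm\"uller space. This should follow from curvature/area comparison: the surface $(S,g_n)$ has curvature $\leq$ that of $(S,\oh_n)$ after rescaling, so that $\mathrm{Area}(g_n)$ is controlled, and the support function of the convex surface relative to the totally geodesic plane is uniformly bounded; combined with a diameter bound for $d$ (hence for $g_n$ for large $n$) this prevents the $\oh_n$ from degenerating (no short geodesics, no cusps forming). Once $\oh_n \to \oh_\infty$ in Teichm\"uller space, the Fuchsian AdS manifolds $L_n$ converge geometrically to a Fuchsian AdS manifold $L_\infty$, and the equivariant embeddings — being uniformly convex with uniformly bounded support functions and uniformly bounded second fundamental forms from below by the convexity and from the curvature bound — subconverge (e.g.\ in the $C^{1,1}$ sense, or as convex bodies in the sense of Hausdorff convergence of graphs over the plane) to a convex space-like embedding of $\widetilde S$ into $\mathrm{AdS}^3$, equivariant under $\rho_\infty$. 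The induced intrinsic metrics converge, so the limit surface induces exactly $d$.

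\medskip

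\noindent\textbf{The main obstacle I expect is the compactness step} — specifically, proving that the sequence of Fuchsian representations $\rho_n$ (equivalently the hyperbolic metrics $\oh_n$) does not degenerate, and that the convex surfaces do not escape to infinity inside the AdS manifolds or become degenerate (e.g.\ lightlike in the limit). Controlling the support functions requires an \emph{a priori} bound relating the ``width'' of the convex surface in the Fuchsian AdS manifold to intrinsic data of $g_n$ (curvature bound, diameter, area), and one must ensure the limiting surface remains space-like and strictly convex in the appropriate weak sense. A secondary technical point is handling the a priori bound on the second fundamental forms of the approximating surfaces so that the limit is a genuine (possibly only $C^{1,1}$) convex space-like surface rather than something degenerate; here the curvature bound $<-1$ on $g_n$ and the Gauss equation in AdS, which forces the extrinsic curvature to be bounded away from the degenerate regime, are the essential tools.
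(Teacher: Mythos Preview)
Your overall strategy --- approximate by smooth metrics of curvature $<-1$, realize each via Labourie--Schlenker as a Fuchsian convex surface in AdS, then extract a limit --- is exactly the paper's. The discrepancies are in the two technical steps, and the first one is a genuine gap.

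\textbf{The approximation step does not work as you describe it.} A partition-of-unity/gluing mechanism does not preserve upper curvature bounds: a convex combination of two Riemannian metrics with curvature $<-1$ need not have curvature $<-1$, and there is no strictness in the hypothesis to exploit (the assumption is curvature $\leq -1$, not $<-1$). The paper explicitly remarks that this approximation, though natural-looking, has no reference in the literature, and devotes all of Section~\ref{sec:approx smooth} to it. The method is quite different from yours: first triangulate $(S,d)$ by simple geodesic triangles of small diameter (using the Alexandrov--Zalgaller theory of surfaces of bounded integral curvature, which applies since CAT$(-1)$ implies BIC), replace each triangle by its hyperbolic comparison triangle to obtain a hyperbolic cone metric with cone angles $\geq 2\pi$, and show via explicit length estimates that these cone metrics converge uniformly to $d$ as the mesh goes to zero; then smooth each cone point by realizing its neighbourhood as a space-like circular cone in AdS and rounding the apex with smooth convex caps.

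\textbf{The compactness step is simpler than you make it.} The paper uses no area comparison, no support-function estimates, and no second-fundamental-form bounds. The single inequality it needs is $d_{u_n}\leq d_{\mathbb{H}^2}$, which is immediate from the warped-product form $\cos^2(t)\,g_{\mathbb{H}^2}-dt^2$ of the AdS metric (since $\cos\leq 1$). This says the hyperbolic translation length of any $\sigma\in\Gamma_n$ is bounded below by its translation length for $d_{u_n}$, which for large $n$ is close to the systole of $d>0$; Mumford compactness then gives subconvergence of $\Gamma_n$. The height functions $u_n$ are uniformly bounded away from $\pi/2$ by an elementary argument (otherwise the induced metrics would collapse). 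Finally, the convex surfaces subconverge simply because bounded convex functions on the disc $\mathbb{D}$ vanishing on $\partial\mathbb{D}$ subconverge uniformly on compacts; no $C^{1,1}$ theory is invoked, and the limit is automatically space-like because any such convex graph in the affine AdS cylinder has only space-like support planes (Lemma~\ref{convex then spacelike}). Your concern about the limit becoming lightlike is thus handled by pure convex geometry, not by extrinsic curvature control.
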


The proof of Theorem~\ref{1.4} will be given by a classical approximation procedure, following the main lines of \cite{FFDS}. The proof relies on the smooth analogue of Theorem~\ref{1.4} proved by F. Labourie and J.-M. Schlenker, see Theorem~\ref{ScH}. We will prove Theorem~\ref{1.4} showing that the universal cover of $(S,d)$ is isometric 
to a convex surface in anti-de Sitter space (see Section~\ref{sec ads}), invariant under the action of a discrete group of isometries leaving invariant a totally geodesic hyperbolic surface. Such groups are usually called \emph{Fuchsian}, and the quotient of a suitable part of anti-de Sitter space by such a group may be called a Fuchsian anti-de Sitter manifold.  The main issues in our case, comparing to \cite{FFDS}, is that we lost the vector space structure given by the Minkowski space ---it is the Lorentzian analogue of the problem to go from Euclidean space to hyperbolic space. Also, the analogue of an approximation result that is straightforward in the flat case occupies the whole Section~\ref{sec:approx smooth} here.

Let us describe more precisely the content of the present article. In  Section~\ref{sec ads},  we  recall   some definitions related to anti-de Sitter space, and define the induced metric on convex surfaces in this space. 
In Section~\ref{sec:fuchsian} we look at surfaces invariant under the action of Fuchsian groups, and prove several compactness results.
  In section~\ref{sec:approx smooth}, we check that any  metric with curvature $\leq -1$ on $S$
 can be approximated  by a sequence of distances given by Riemannian metrics with sectional  curvature $<-1$.
 In Section~\ref{sec proof},  all the elements are put together to provide a proof of Theorem \ref{1.4}.

The case with a positive $k$ is still missing to obtain a Lorentzian analogue of Theorem~\ref{gen riem}. An issue is that it is not clear if the approximation results used in Section~\ref{sec:approx smooth} can be applied in the $\leq 1$ curvature  case.

\paragraph{Acknowledgement} This work is a part of author’s doctoral thesis completed under
the supervision of F. Fillastre
 and the author is invaluable grateful to him for his constant
attention and advice. The author is also invaluable grateful to an anonymous referee, whose comments 	undoubtedly helped
to greatly improve the presentation. 

\section{Convex surfaces in anti-de Sitter space}\label{sec ads}

\subsection{Anti-de Sitter space}

 In the following we will  describe  a geometric model of anti-de Sitter space (of dimension $3$)  we are most interested in, and illustrate some of its features. Good references for this material are \cite{Mess07}, \cite{BONSSCH}, \cite{BONSSCH10} and \cite{O'Neill83}.

 



 Let us consider the symmetric  bilinear form 
    \begin{center}
$b(x,y)=-x_0y_0-x_1y_1+x_2y_2+x_3y_3.$
\end{center}
of signature $(2,2)$ on    $\mathbb{R}^4$.

\begin{definition}\label{quadric}  
We define   $\widehat{AdS^3}$ as 
\begin{center}
$\widehat{AdS^3}=\{(x_0,x_1,x_2,x_3)\in\mathbb{R}^4|b(x,x)=-1\},$
\end{center}
endowed with the pseudo-Riemannian metric induced by  the restriction of the bilinear form $b$ to its tangent spaces.
\end{definition}

Hence $\widehat{AdS^3}$ is a Lorentzian manifold, and it can be checked  that its sectional curvature is $-1$.

$$
\text{A tangent vector $v$ to $\widehat{AdS^3}$ at a point  $x$ is called:  }
\left \{
    \begin{array}{ll}
         \text{space-like}& if \text{     }  b(v,v)>0. \\ 
        \text{time-like}& if \text{     } b(v,v)<0. \\
        \text{light-like}& if  \text{     } b(v,v)=0. 
    \end{array}
\right.
$$

Now let $x,y\in \mathbb{R}^4$.  We say that $x\sim y$ if and only if  there exists  $\lambda\in\mathbb{R}^*$  such that  $x=\lambda y$. 
\begin{definition}  We define   the anti-de Sitter space  of dimension $3$ as follows: $$AdS^3=\widehat{AdS^3}/\sim$$ 
endowed with the quotient metric.
\end{definition}
It is easy to see that $\widehat{AdS^3}$ is a double cover of $AdS^3$. The pseudo-Riemannian metric induced on $\widehat{AdS^3}$ goes down to the quotient. 

By definition $AdS^3$ is a subset of the projective space. 
In order to better visualize  it, we look at its intersection with an affine chart and see its image in $\mathbb{R}^3$. Let   $\varphi_0:\mathbb{RP}^3\setminus\{x_0=0\}\rightarrow\mathbb{R}^3$ be an affine chart  of $\mathbb{RP}^3$ defined by:
\begin{equation}
\varphi_0([x_0,x_1,x_2,x_3])=\left(\frac{x_1}{x_0},\frac{x_2}{x_0},\frac{x_3}{x_0}\right)=(\bar{x}_1,\bar{x}_2,\bar{x}_3). \label{cord}
\end{equation}

Then  $\varphi_0(AdS^3\setminus\{x_0=0\})$ gives,

$$-x_0^2-x_1^2+x_2^2+x_3^2<0 \Rightarrow-1-\left({\frac{x_1}{x_0}}\right)^2+\left({\frac{x_2}{x_0}}\right)^2+\left({\frac{x_3}{x_0}}\right)^2<0$$ so in this  affine chart $AdS^3$ fills the domain $$-\bar{x}_1^2+\bar{x}_2^2+\bar{x}_3^2<1,$$ which is the interior of a one-sheeted hyperboloid. Notice that $AdS^3$ is not contained in a single affine chart. In the affine  chart  $\varphi_0$  we are missing a totally geodesic plane at infinity, corresponding to $\{x_0=0\}$.

In all the article, we will denote by $\mathbb{D}$, the disc  
$ 
\left \{
    \begin{array}{ll}
     \bar{x}_2^2+\bar{x}_3^2  <1 \\ 
\bar{x}_1=0 \\     \end{array}
\right.
$ in the affine chart $\varphi_0$ (see Figure \ref{adsfig4}).

\begin{figure}[H]
  \includegraphics[scale=0.15]{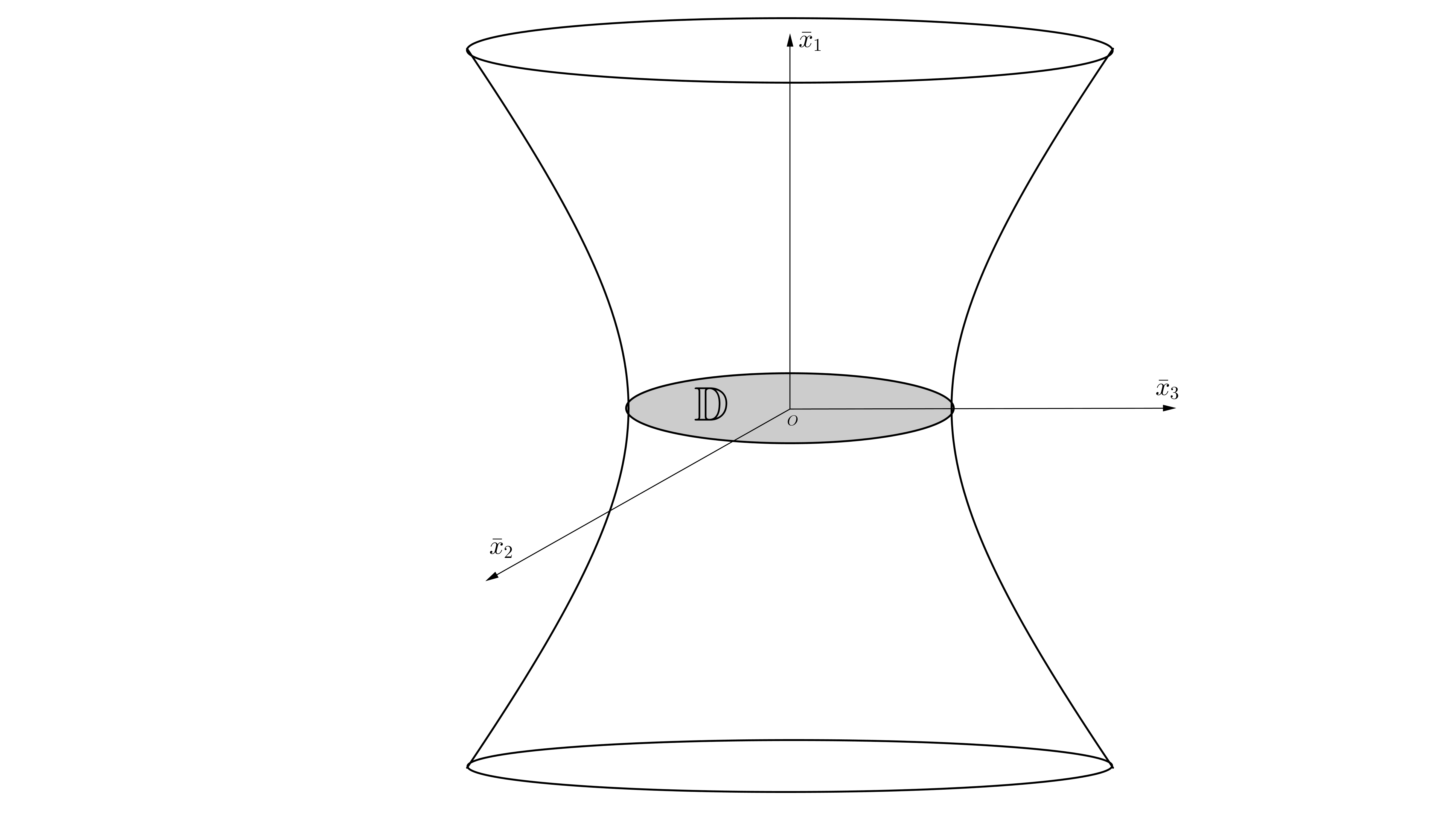}
 \begin{center}  

   \caption{Image of $AdS^3$ in the affine chart $\varphi_0$.}\label{adsfig4}
   \end{center}
\end{figure}

It is clear from the construction that in the affine chart $\varphi_0$, geodesics (resp. totally geodesic planes) are given by the intersection between affine lines (resp. affine planes) in $\mathbb{R}^3$ with the interior of the one sheeted hyperboloid described above.  
 A plane $P$ is space-like if the restriction of the induced metric on $P$ is positive-definite. A \emph{convex  space-like surface}  in anti-de Sitter space is a surface which is  convex in an affine chart and   which has only space-like planes as support planes. The \emph{boundary at infinity} of   $AdS^3$  is given by \begin{center}
 $\{[x]\in\mathbb{RP}^3:b(x,x)=0\}/\sim$
 \end{center}
 and we will denote it by $\partial_\infty AdS^3$,  (and by  $\varphi_0(\partial_\infty AdS^3)$ the boundary in the affine chart  $\varphi_0$).
 We can distinguish the type of geodesics in the image of anti-de Sitter space in the affine chart as follows (see  Figure~\ref{geodesicss}): \begin{itemize}
 \item A geodesic in  $AdS^3$ is space-like if it  meets  $\partial_\infty AdS^3$ in two different points. 
  \item A geodesic in $AdS^3$ is light-like if it meets   $\partial_\infty AdS^3$ in only one point. 
    \item A geodesic in $AdS^3$ is time-like if it is strictly contained  in the hyperboloid. 
  \end{itemize}
   \begin{center}
 \begin{figure}[H]
  \includegraphics[scale=0.15]{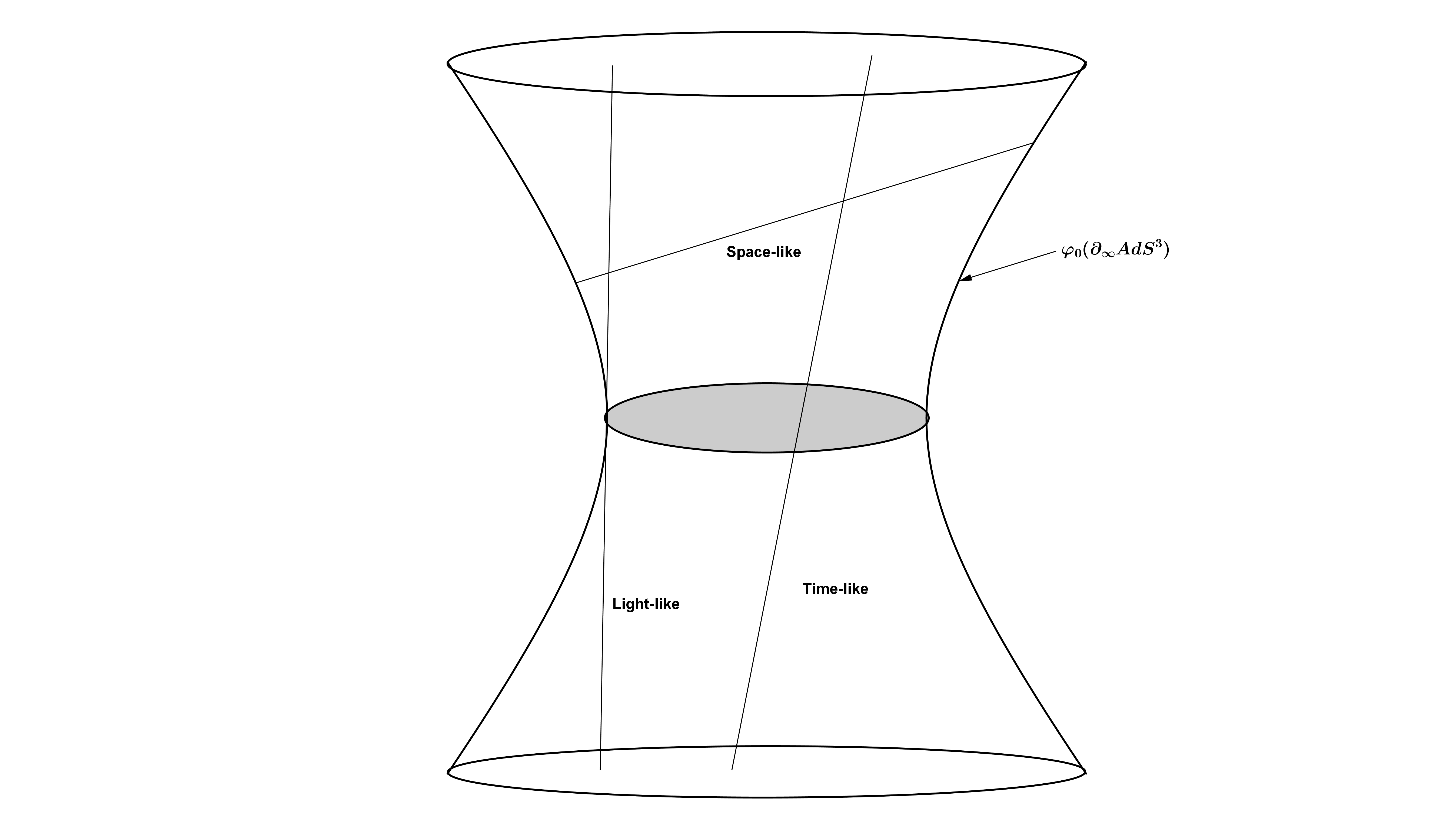}
  \caption{Geodesics in an affine model of $AdS^3$.}\label{geodesicss}
 \end{figure}
  \end{center}

  Note  that $AdS^3\cap \{x\in \R^4 \vert x_1=0 \}=:H_0$ is isometric to the hyperbolic plane. We use this fact to define the following map.
Let $\tilde{\Psi}:\mathbb{H}^2\times\mathbb{R}\longrightarrow AdS^3$ be the map  defined by $\tilde{\Psi}(x,t)=\exp_x(tV)$ where 
\begin{itemize}
\item  $\tilde{\Psi}(\mathbb{H}^2,0)=H_0$, and $x\mapsto \tilde{\Psi}(x,0)$ is an isometry,
\item $V$ is a choice of a unit vector field orthogonal to $H_0$, for the anti-de Sitter metric.
\end{itemize}

Indeed, we have $\tilde{\Psi}(x,t)=\cos(t)x+\sin(t)V$ with $V=(0,-1,0,0)$.
For a given $x$, $t\mapsto  \tilde{\Psi}(x,t)$ is a time-like geodesic loop with time-length $2\pi$.
We will call   \emph{AdS cylinder} the cylinder $\mathbb{H}^2\times [0,\pi/2[$ endowed with the Lorentzian 
metric $g_{AdS}$, which is the pull back of the anti-de Sitter metric by $\tilde{\Psi}$. 
Let us denote  $AdS^3\cap \{x\in \R^4 \vert x_1=r \}=:H_r$. The induced metric onto $H_r$ is homothetic to the hyperbolic metric with  factor $(1-r^2)$, and clearly $\tilde{\Psi}(\mathbb{H}^2,t)=H_{\sin(t)}$. In turn, 
$$g_{AdS}(x,t)=\cos^2(t)g_{\mathbb{H}^2}(x)-dt^2$$
where $g_{\mathbb{H}^2}$ is the metric on the hyperbolic plane.

It will be suitable to work with the image of $\tilde{\Psi}$ in the affine chart considered above. Let us denote $\Psi=\varphi_0\circ \tilde{\Psi}$. The set $\Psi(\mathbb{H}^2\times [0,\pi/2[)$ 
 is indeed a Euclidean half-cylinder in $\mathbb{R}^3$ (see Figure~\ref{AdS-cylinder11}). We have $\Psi(\mathbb{H}^2,0)=\mathbb{D}$ and for $x\in \mathbb{H}^2$,
$t\mapsto  \tilde{\Psi}(x,t)$ is a vertical half line from $\mathbb{D}$. We will call \emph{affine AdS cylinder} the image of $\mathbb{H}^2\times[0,\pi/2[$ by $\Psi$. For convexity reasons, we will need only to consider a half cylinder.
   \begin{center}
\begin{figure}[H]
   \includegraphics[scale=0.15]{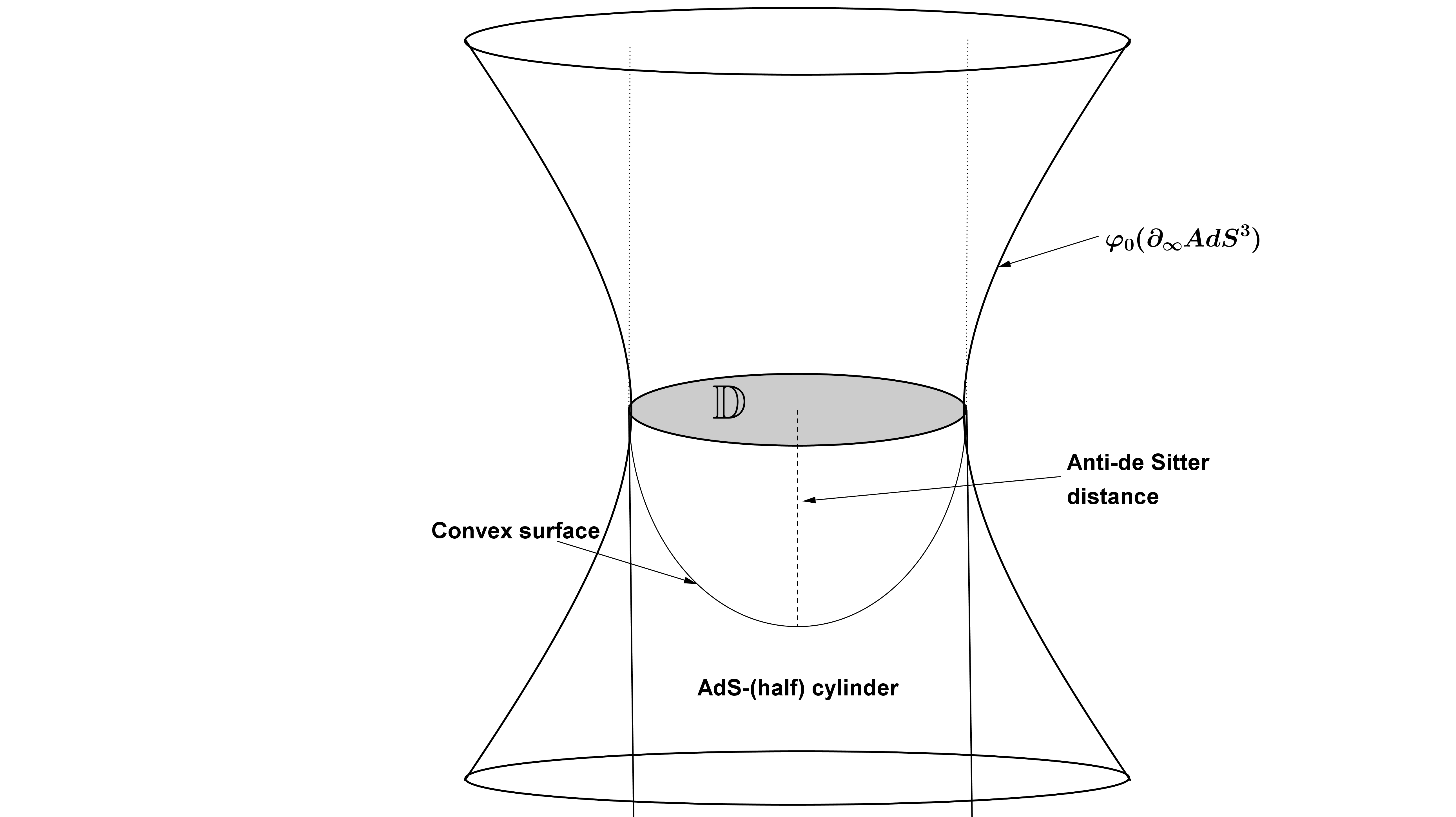}
 \caption{The AdS cylinder and a  convex surface inside.}\label{AdS-cylinder11}
 \end{figure}
   \end{center}

\subsection{Convex functions}

For a function   $u:\mathbb{H}^2\to[0,\pi/2[$, we denote 
$$S_u=\{(x,u(x))\vert x\in\mathbb{H}^2\}~.$$
For every  $x\in\mathbb{H}^2$ we denote by  $\bar{x}=\Psi(x,0)$ the  corresponding point on the disc $\mathbb{D}$, where $ \Psi$ it the map  introduced in the previous section.
The image of $S_u$ in the affine AdS cylinder  is the graph of a function over $\mathbb{D}$, that we will denote by 
 $\bar{u}$. We will denote by $S_{\bar{u}}$ the image of $S_u$.  Hence, $\bar{u}:\mathbb{D}\to\mathbb{R}$ and 
 $$(\bar{x},\bar{u}(\bar{x}))=\Psi(x,u(x))~.$$
 
For a point    $\bar{x}\in\mathbb{D}$ we use the notation  $\bar{x}=(\bar{x}_2,\bar{x}_3)$  for its  Euclidean coordinates, and its Euclidean norm is $\parallel\bar{x}\parallel=\sqrt{\bar{x}_2^2+\bar{x}_3^2}$. By the considerations of the preceding section, we immediately obtain the following relation.

\begin{lemma} \label{relation distances} With the notations above
$\bar{u}(\bar{x})=-\tan(u(x))\sqrt{1-\parallel \bar{x}\parallel^2}.$ 
\end{lemma}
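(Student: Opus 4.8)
The statement to prove is Lemma~\ref{relation distances}: that $\bar u(\bar x) = -\tan(u(x))\sqrt{1-\|\bar x\|^2}$. Since $\bar u$ is defined by $(\bar x, \bar u(\bar x)) = \Psi(x, u(x)) = \varphi_0(\tilde\Psi(x,u(x)))$, the whole lemma is just a matter of unwinding the two maps $\tilde\Psi$ and $\varphi_0$ that have already been made explicit in the text, and reading off the relevant coordinate. So the proof is a direct computation with no genuine obstacle — the only thing to be careful about is matching up the coordinate conventions.

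Let me think about the conventions. The disc $\mathbb D$ sits in the chart $\varphi_0$ as $\{\bar x_1 = 0,\ \bar x_2^2 + \bar x_3^2 < 1\}$, and is the $\varphi_0$-image of $H_0 = AdS^3 \cap \{x_1 = 0\}$, which is (a copy of) $\mathbb H^2$. So a point $x \in \mathbb H^2$ corresponds to $\hat x = (x_0, 0, x_2, x_3) \in \widehat{AdS^3}$ with $-x_0^2 + x_2^2 + x_3^2 = -1$, and $\bar x = \varphi_0([\hat x]) = (0, x_2/x_0, x_3/x_0)$, i.e. $\bar x_2 = x_2/x_0$, $\bar x_3 = x_3/x_0$. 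Hence $\|\bar x\|^2 = (x_2^2 + x_3^2)/x_0^2 = (x_0^2 - 1)/x_0^2 = 1 - 1/x_0^2$, which gives $\sqrt{1 - \|\bar x\|^2} = 1/x_0$ (taking $x_0 > 0$ on the relevant sheet).

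Now I apply $\tilde\Psi$: the text says $\tilde\Psi(x, t) = \cos(t)\,x + \sin(t)\,V$ with $V = (0, -1, 0, 0)$. So $\tilde\Psi(x, u(x)) = (\cos(u(x))\,x_0,\ -\sin(u(x)),\ \cos(u(x))\,x_2,\ \cos(u(x))\,x_3)$. Applying the affine chart $\varphi_0$ (divide by the zeroth coordinate $\cos(u(x))\,x_0$), I get
\[
\varphi_0(\tilde\Psi(x,u(x))) = \left(\frac{-\sin(u(x))}{\cos(u(x))\,x_0},\ \frac{x_2}{x_0},\ \frac{x_3}{x_0}\right) = \left(\frac{-\tan(u(x))}{x_0},\ \bar x_2,\ \bar x_3\right).
\]
The second and third coordinates are exactly $\bar x = (\bar x_2,\bar x_3)$, confirming that $(\bar x, \bar u(\bar x)) = \Psi(x,u(x))$ is consistent, and the first coordinate is $\bar u(\bar x) = -\tan(u(x))/x_0 = -\tan(u(x))\sqrt{1-\|\bar x\|^2}$, using the computation of $1/x_0$ above. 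That is the claimed identity.

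So the proof I would write is: (1) parametrize $x \in \mathbb H^2 \cong H_0$ by $(x_0, 0, x_2, x_3)$ with $b(x,x) = -1$ and $x_0 > 0$, and note $\sqrt{1 - \|\bar x\|^2} = 1/x_0$ from the constraint; (2) substitute into $\tilde\Psi(x, u(x)) = \cos(u(x))x + \sin(u(x))V$; (3) apply $\varphi_0$ by dividing through by the first coordinate and read off the last coordinate. There is no hard part — if anything, the only place to take care is to justify the sign (choosing $x_0 > 0$, consistent with $u$ taking values in $[0, \pi/2[$ so that $\cos(u(x)) > 0$ and the point stays in the chart $\{x_0 \neq 0\}$), and to make sure the indexing of the affine chart $\varphi_0$ in \eqref{cord} is being applied to the vector $\cos(u(x))x + \sin(u(x))V$ in the right order.
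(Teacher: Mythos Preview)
Your proof is correct and is exactly the computation the paper has in mind: the paper gives no separate proof, stating only that the relation follows ``by the considerations of the preceding section,'' and your unwinding of $\varphi_0\circ\tilde\Psi$ is precisely that. One tiny slip in your summary paragraph: in step~(3) you write ``read off the last coordinate,'' but as your own computation shows, $\bar u(\bar x)$ is the \emph{first} affine coordinate $\bar x_1$ (the disc $\mathbb D$ lies in $\{\bar x_1=0\}$); the body of your argument has this right.
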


\begin{definition}
Let $u:\mathbb{H}^2\rightarrow \mathbb{R}$ be a function. We say that $u$ is \emph{C-convex} if 
\begin{itemize}
\item $u\geq 0$ and there is $R<\pi/2$ such that $u\leq R < \pi/2$;
\item the corresponding function $\bar{u}$ is convex. 
\end{itemize}
 \end{definition}

  It is worth noting that for $R\geq 0$, if $u=R$, then  the graph of the map defined by
 $\bar{u}(\bar{x})=-\tan(R)\sqrt{1-\parallel \bar{x}\parallel^2}$  is a half ellipsoid. Also,
$|\bar{u}(\bar{x})|\leq\tan(R)\sqrt{1-\parallel \bar{x}\parallel^2}$.
 It follows that if $u$ is C-convex, then  $\bar{u}$ is bounded and  satisfies $\bar{u}|_{\partial\mathbb{D}}=0$.

  It is also clear   that   a bounded convex function $\bar{u}:\mathbb{D}\to \mathbb{R}$ vanishes everywhere if it vanishes in a point of the open disc $\mathbb{D}$.  So we have $\bar{u}\leq 0$ by definition, and $\bar{u}<0$ or $\bar{u}=0$.

Let us note the following.

\begin{lemma} \label{passe par le disc} In the image of $AdS^3$ by $\varphi_0$,
\begin{enumerate}
\item Every time-like  line passes through the disc $\mathbb{D}$. 
\item Every light-like  line which doesn't pass through the boundary $\partial_\infty\mathbb{D}$ must pass through the disc $\mathbb{D}$.
\item A cone with basis the disc $\mathbb{D}$ and with apex in the affine cylinder is a convex  space-like surface. 
\end{enumerate}
\end{lemma}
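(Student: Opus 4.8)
The plan is to work entirely in the affine chart $\varphi_0$ and to attach to each affine line a quadratic polynomial. If $\ell=\{\bar p+t\bar v:t\in\mathbb{R}\}\subset\mathbb{R}^3$ with $\bar v=(v_1,v_2,v_3)\neq 0$, set $q_\ell(t)=-(\bar p_1+tv_1)^2+(\bar p_2+tv_2)^2+(\bar p_3+tv_3)^2-1$, a polynomial of degree $\le 2$ with leading coefficient $-v_1^2+v_2^2+v_3^2$. By the description of $\varphi_0(AdS^3)$ recalled in Section~\ref{sec ads}, a point of $\ell$ lies in $AdS^3$ precisely where $q_\ell<0$, and on the finite part $\varphi_0(\partial_\infty AdS^3)$ of the boundary at infinity precisely where $q_\ell=0$; in particular $\mathbb{D}=\{\bar x_1=0\}\cap\{q_\ell<0\}$ and $\partial_\infty\mathbb{D}=\{\bar x_1=0\}\cap\{q_\ell=0\}$.

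For (1), a time-like line is an affine line $\ell$ with $q_\ell(t)<0$ for every $t\in\mathbb{R}$ (a time-like geodesic is a closed loop disjoint from $\partial_\infty AdS^3$, and in the chart $\varphi_0$ it is the whole line). Then the leading coefficient of $q_\ell$ is $\le 0$, which already forces $v_1\neq 0$ (otherwise $v_2=v_3=0$ and $\bar v=0$); hence $\ell$ meets the plane $\{\bar x_1=0\}$ in a point $P$, and $q_\ell(P)<0$, i.e.\ $P\in\mathbb{D}$. For (2), a light-like line is an affine line $\ell$ with $q_\ell\le 0$ everywhere, vanishing at most at the single finite point where $\ell$ meets $\varphi_0(\partial_\infty AdS^3)$. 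As before $-v_1^2+v_2^2+v_3^2\le 0$, so $v_1\neq 0$, and $\ell$ meets $\{\bar x_1=0\}$ in a point $P$ with $q_\ell(P)\le 0$. If $q_\ell(P)=0$ then $P\in\{\bar x_1=0\}\cap\{q_\ell=0\}=\partial_\infty\mathbb{D}$, contrary to hypothesis; hence $q_\ell(P)<0$ and $P\in\mathbb{D}$.

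For (3), let $C$ be the boundary of the convex body $K=\mathrm{conv}(\{A\}\cup\overline{\mathbb{D}})$ with its open base disc removed, i.e.\ the cone in question. An elementary estimate on convex combinations shows $K$ is contained in the closed affine cylinder, so $C$ lies in $AdS^3$ away from its boundary circle $\partial_\infty\mathbb{D}$, and $C$ is convex in the chart by construction. It remains to show that every support plane of $C$ is a space-like plane; since every tangent direction of a totally geodesic plane is the velocity of some line contained in it, this amounts to showing that every affine line $\ell$ lying in a support plane and meeting $AdS^3$ is a space-like geodesic. By the structure of $K$, a support plane either meets the open base disc, in which case it is the plane $\{\bar x_1=0\}$, i.e.\ $H_0\cong\mathbb{H}^2$ (whose lines meeting $AdS^3$ are chords of $\mathbb{D}$, hence space-like), or it is disjoint from the open disc $\mathbb{D}$. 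For a support plane $P$ of the second kind and a line $\ell\subset P$ meeting $AdS^3$: if $\ell$ were time-like, (1) would give $\ell\cap\mathbb{D}\neq\emptyset$, impossible; if $\ell$ were light-like, (2) would force $\ell$ through a point $(0,w_0)\in\partial_\infty\mathbb{D}$, and then one checks with $q_\ell$ that a light-like line through $(0,w_0)$ entering $AdS^3$ has a very constrained direction (its horizontal part is a nonzero multiple of the tangent to $\partial_\infty\mathbb{D}$ at $(0,w_0)$, with nonzero first coordinate) which, because the apex projects into the \emph{open} unit disc, is not a direction of $P$ — the second generator of $P$, along a ruling of $C$ or along the rim, is never tangent to $\partial_\infty\mathbb{D}$. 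Thus $\ell$ is space-like and $P$ is space-like.

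The only substantial point is (3): one has to identify the support planes of $C$ — the plane $\{\bar x_1=0\}$, the planes through the apex tangent to $C$ along a ruling, and the planes supporting $C$ along the rim circle — and carry out the $q_\ell$-computation excluding light-like lines inside them; this is the single place where the hypothesis that the apex lies in the affine cylinder (so that it projects strictly below $\mathbb{D}$ and into the open unit disc) is used. Parts (1) and (2) are routine once the correspondence between lines and the polynomials $q_\ell$ is in place.
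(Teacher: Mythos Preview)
Your treatment of parts (1) and (2) via the quadratic $q_\ell$ is correct and makes explicit what the paper leaves as ``almost immediate''; this is essentially the same argument.

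For part (3), however, your route diverges from the paper's and has a genuine gap. The paper's dichotomy is: either the support plane misses the \emph{closed} disc $\bar{\mathbb{D}}$, in which case (1) and (2) immediately exclude time-like and light-like lines; or it contains a ruling of the cone, in which case one checks that this specific tangent plane is space-like. Your dichotomy instead separates ``meets the open disc $\mathbb{D}$'' from ``disjoint from $\mathbb{D}$'', which forces almost all the work into the second case, where the plane may still meet $\partial_\infty\mathbb{D}$.

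The gap is in your exclusion of light-like lines there. First, your description of light-like directions through a point $(0,w_0)\in\partial_\infty\mathbb{D}$ is incomplete: you say the horizontal part is a \emph{nonzero} multiple of the tangent to the circle, but the purely vertical line through $(0,w_0)$ is also light-like (compute $q_\ell(t)=-t^2$). Second, and more seriously, arguing that two chosen ``generators'' of $P$ (a ruling direction and a rim direction) are individually not tangent to $\partial_\infty\mathbb{D}$ does not prevent some linear combination of them from having horizontal part tangent to the circle --- a $2$-plane spanned by two space-like vectors can perfectly well contain a null direction. So the final step ``is not a direction of $P$'' is unjustified.

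The paper's organization avoids this difficulty: once the support plane is known to contain a ruling, it is the unique tangent plane to the cone along that ruling, and one can verify directly (for instance by computing that its $b$-normal in $\mathbb{R}^4$ is time-like) that this plane is space-like. Reworking your argument along the paper's dichotomy, and replacing the ``generators'' reasoning by a direct check of the tangent plane, would close the gap.
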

\begin{proof} 
The proofs of the two first points are almost immediate. For the third point, either a support plane of the cone does not meet 
the closure of $\mathbb{D}$, hence it is space-like, or  a support plane of the cone contains a half-line of the cone, then it meets the boundary of the disc, but by assumption this half-line is not vertical, hence not light-like, so the plane is space-like.
\end{proof} 
\begin{lemma}\label{convex then spacelike}
Let $u:\mathbb{H}^2\to[0,\pi/2[ $ be a C-convex  function.  Then the surface $S_u$ is space-like. 
\end{lemma}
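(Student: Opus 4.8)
The statement is equivalent to saying that the graph $S_{\bar u}$ of the convex function $\bar u$ in the chart $\varphi_0$ is a convex space-like surface; it is automatically convex since $\bar u$ is, so the task is to show that every support plane of $S_{\bar u}$ is space-like. The plan is to work entirely in the affine chart and reduce everything to the third point of Lemma~\ref{passe par le disc}.

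First I would dispose of the degenerate case $u\equiv 0$: then $S_{\bar u}=\mathbb{D}$ is an open piece of the totally geodesic space-like plane $H_0\cong\mathbb{H}^2$, and there is nothing to prove. So assume $u\not\equiv 0$; by the remark preceding the statement, $\bar u<0$ on the whole open disc $\mathbb{D}$, $\bar u$ is bounded, and $\bar u$ extends continuously to $\overline{\mathbb{D}}$ with $\bar u|_{\partial\mathbb{D}}=0$.

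Now fix $\bar x_0\in\mathbb{D}$ and let $P$ be a support plane of $S_{\bar u}$ at $p=(\bar x_0,\bar u(\bar x_0))$ (such a $P$ exists because $\bar u$ is convex and $\bar x_0$ is an interior point). Then one of the two closed half-spaces $\mathcal{H}$ bounded by $P$ contains $S_{\bar u}$, hence also $\overline{S_{\bar u}}$, which includes the circle $\partial\mathbb{D}$ because $\bar u|_{\partial\mathbb{D}}=0$; so $\{p\}\cup\partial\mathbb{D}\subset\mathcal{H}$. Since $\bar u(\bar x_0)<0$, the apex $p$ lies in the interior of the affine AdS cylinder, so the cone $C$ with apex $p$ and basis the disc $\mathbb{D}$ is precisely a cone as in the third point of Lemma~\ref{passe par le disc}; hence $C$ is a convex space-like surface and all of its support planes are space-like. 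But $C$ is contained in the convex hull of $\{p\}\cup\partial\mathbb{D}$, and the latter lies in the convex set $\mathcal{H}$, so $C\subset\mathcal{H}$; since $p\in C\cap P$, the plane $P$ supports $C$ at $p$ and is therefore space-like. As $\bar x_0$ and $P$ were arbitrary, every support plane of $S_{\bar u}$ is space-like, and carrying this back through $\Psi$ shows that $S_u$ is space-like.

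There is no real obstacle here, since the anti-de Sitter content is entirely packaged in Lemma~\ref{passe par le disc}; two small points need care. One must separate off the case $u\equiv 0$ so that the apex $p$ genuinely lies in the interior of the affine cylinder and not on its disc face --- this is where the remark that a bounded convex function vanishing at an interior point vanishes identically is used. And one must justify the closure step $\partial\mathbb{D}\subset\overline{S_{\bar u}}$ from the boundary behavior of $\bar u$ before concluding that the whole cone $C$, including its flat face, lies on one side of $P$.
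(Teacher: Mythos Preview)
Your proof is correct and follows essentially the same route as the paper: reduce to showing that a support plane of $S_{\bar u}$ at $p$ is also a support plane of the cone with apex $p$ over $\mathbb{D}$, and then invoke the third item of Lemma~\ref{passe par le disc}. The paper's argument is the one-line version of yours (``by convexity, a support plane to the surface at $p$ is a support plane of the cone''); you simply spell out the boundary behaviour $\bar u|_{\partial\mathbb{D}}=0$ and the degenerate case $u\equiv 0$ that the paper leaves implicit.
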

\begin{proof} 
Let $p$ be a point on the image of $S_u$ in the affine half  cylinder, and let $C_p$ be the cone with basis the disc $\mathbb{D}$ and apex $p$. By definition, this cone is contained in the affine half  cylinder. By convexity, a support plane to the surface at $p$ is a support plane of the cone, so by  
 Lemma  \ref{passe par le disc} it must be space-like.
\end{proof}

We say that a sequence $(u_n)_n$ of C-convex functions is \emph{uniformly bounded} if there is $R<\pi/2$
such that for any $n$, $u_n<R$.
 \begin{lemma}\label{lem:subseq cv}
 Let $(u_n)_n$ be a sequence of uniformly bounded C-convex functions. Up to extracting  a subsequence, $(u_n)_n$ converges to a C-convex function $u$, uniformly on  compact sets. 
 \end{lemma}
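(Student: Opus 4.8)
The plan is to reduce everything to the convergence of the associated convex functions $\bar{u}_n:\mathbb{D}\to\mathbb{R}$ and then transport the conclusion back through the explicit formula of Lemma~\ref{relation distances}. First I would record that uniform boundedness $u_n<R<\pi/2$ together with $u_n\geq 0$ gives, via the bound $|\bar{u}_n(\bar{x})|\leq\tan(R)\sqrt{1-\|\bar{x}\|^2}\leq\tan(R)$, that the sequence $(\bar{u}_n)_n$ is uniformly bounded on $\mathbb{D}$ by the constant $\tan(R)$, and that each $\bar{u}_n$ is convex with $\bar{u}_n\leq 0$.

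Next I would invoke the standard compactness theory for convex functions: a pointwise bounded sequence of convex functions on an open convex set is locally uniformly Lipschitz (the Lipschitz constant on a compact set $K\Subset\mathbb{D}$ is controlled by the oscillation of the functions on a slightly larger compact set, hence by $\tan(R)$ and the geometry of $K$), and therefore by Arzel\`a--Ascoli, after passing to a subsequence, $(\bar{u}_n)_n$ converges uniformly on compact subsets of $\mathbb{D}$ to some function $\bar{u}:\mathbb{D}\to\mathbb{R}$. The limit $\bar{u}$ is automatically convex (a pointwise limit of convex functions is convex) and satisfies $\bar{u}\leq 0$ and $-\tan(R)\leq\bar{u}\leq 0$, so $\bar{u}$ is a bounded convex function on $\mathbb{D}$.

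Then I would convert $\bar{u}$ back into a function $u:\mathbb{H}^2\to[0,\pi/2[$ by inverting the relation of Lemma~\ref{relation distances}: for $\bar{x}\in\mathbb{D}$ corresponding to $x\in\mathbb{H}^2$, set
\[
u(x)=\arctan\!\left(\frac{-\bar{u}(\bar{x})}{\sqrt{1-\|\bar{x}\|^2}}\right).
\]
Since $\Psi(\cdot,0)$ identifies $\mathbb{H}^2$ with $\mathbb{D}$ and $\bar{u}$ is continuous with $-\tan(R)\leq\bar{u}\leq 0$, this $u$ is well-defined, continuous, satisfies $0\leq u\leq R<\pi/2$, and its associated affine graph is exactly $\bar{u}$, so $u$ is C-convex. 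Finally I would check that $u_n\to u$ uniformly on compact subsets of $\mathbb{H}^2$: a compact set of $\mathbb{H}^2$ maps under $\Psi(\cdot,0)$ to a compact subset of the open disc $\mathbb{D}$, on which $\sqrt{1-\|\bar{x}\|^2}$ is bounded below by a positive constant, so the map $\bar{u}\mapsto u$ given by the displayed formula is uniformly continuous there; combined with $\bar{u}_n\to\bar{u}$ uniformly on that compact set, this yields $u_n\to u$ uniformly on the original compact set.

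The only genuinely non-routine point is the passage from pointwise/uniform boundedness of the convex functions $\bar{u}_n$ to local equi-Lipschitzness, i.e.\ the application of the classical fact that bounded convex functions on an open convex set have gradients bounded on compact subsets in terms of the sup-norm; everything else is bookkeeping with the explicit diffeomorphism $\Psi$. One subtlety I would be careful about is that the convergence and the bounds are only controlled on compact subsets of the \emph{open} disc $\mathbb{D}$, which is exactly why the statement only claims uniform convergence on compact sets and not up to the boundary; but since we already know each C-convex function extends continuously by $0$ to $\partial\mathbb{D}$, no boundary issue arises for the conclusion as stated.
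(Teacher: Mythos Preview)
Your argument is correct and follows essentially the same route as the paper: reduce to the associated convex functions $\bar{u}_n$ on $\mathbb{D}$, apply the classical compactness theorem for pointwise bounded sequences of convex functions (the paper cites \cite[Theorem 10.9]{Roc97}), and then transport the limit back via the explicit relation of Lemma~\ref{relation distances}. The paper compresses all of this into a one-line reference to Rockafellar, whereas you spell out the equi-Lipschitz/Arzel\`a--Ascoli mechanism and the inversion step, but the content is the same.
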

 \begin{proof}
 This is a classical  property of the corresponding convex functions $\bar{u}_n$,   \cite[Theorem 10.9]{Roc97}, in the special case when the surfaces vanish on the boundary of the disc $\mathbb{D}$.
 \end{proof}

 Let $u_n$, $n>1$, be uniformly bounded C-convex  functions converging to a C-convex function $u=u_0$. 
 Let  $c:I\rightarrow\mathbb{H}^2$ be a Lipschitz curve and  $\bar{c}:I\rightarrow\mathbb{D}$ be its image by $\Psi$. 
  Then $\bar{u}\circ \bar{c}$, $\bar{u}_n\circ \bar{c}$ are Lipschitz ---the Lipschitz nature of $\bar{c}$ is independent of a choice of a Riemannian metric on the disc. By Rademacher  Theorem,  there exists a set $I_0$ of Lebesgue measure $0$ in $I$ such that for all $n\in\mathbb{N}$,  $\bar{u}_n$  is  differentiable  on $I\setminus I_0$ .

 \begin{lemma}\label{deriva} 
Let $u_n:\mathbb{H}^2\to \mathbb{R}$ be uniformly bounded  C-convex functions converging to a C-convex function $u$, and let $c:I\rightarrow\mathbb{H}^2$ be a Lipschitz curve.  Up to extracting a subsequence, for almost all $t$, 
 $$(u_n\circ c)'(t)\rightarrow (u\circ c)'(t)~.$$ 
 \end{lemma}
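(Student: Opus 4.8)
The plan is to reduce the statement to a one-dimensional fact about convex functions restricted to a Lipschitz curve. First I would set $f_n = \bar u_n \circ \bar c$ and $f = \bar u \circ \bar c$, where $\bar c$ is the image of $c$ in $\mathbb D$; these are Lipschitz functions on $I$ with a uniform Lipschitz constant, because the $\bar u_n$ are convex and uniformly bounded (hence uniformly Lipschitz on compact subsets of $\mathbb D$ — note that since the $u_n$ are uniformly bounded by some $R<\pi/2$, the curve $\bar c$ together with all the graphs stays in a fixed compact region, so the slopes are controlled), and $\bar c$ is Lipschitz. By Lemma~\ref{relation distances} the relation between $u_n$ and $\bar u_n$ is given by a fixed smooth function, so it suffices to prove the convergence of derivatives for $f_n \to f$ and then transfer it back; alternatively one works directly with $u_n\circ c$, which is again Lipschitz with uniform constant on the compact curve.

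The key step is the following: a uniformly bounded, uniformly Lipschitz sequence of convex functions $f_n$ on $I$ converging pointwise to $f$ has the property that $f_n' \to f'$ almost everywhere, at least after passing to a subsequence. Here I would use two inputs. (1) Since the $f_n$ are uniformly Lipschitz, their derivatives $f_n'$ are uniformly bounded in $L^\infty(I)$, hence by weak-$*$ compactness (or by the Helly-type selection available for monotone-like data coming from convexity) there is a subsequence along which $f_n'$ converges weakly-$*$ to some $g\in L^\infty$; testing against indicator functions of intervals and using $f_n\to f$ uniformly (which follows from the uniform Lipschitz bound plus pointwise convergence, via Arzelà–Ascoli) forces $g = f'$ a.e. (2) To upgrade weak-$*$ convergence to a.e. convergence one exploits convexity: where $c$ moves, the restriction $f_n$ inherits one-sided second-difference inequalities, so one can control oscillation; more cleanly, one invokes the classical fact that if convex functions converge pointwise then their derivatives converge at every point of differentiability of the limit (the monotonicity of difference quotients squeezes the one-sided derivatives). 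The composition with the Lipschitz curve $\bar c$ does not destroy this because at almost every $t$ the curve is differentiable with $\bar c'(t)\neq 0$ on the portion where it matters, and on the exceptional set of measure zero there is nothing to prove; one must be a little careful to handle subintervals where $\bar c$ is locally constant, but there $f_n$ and $f$ are locally constant too and the derivatives are all zero.

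The main obstacle I expect is precisely point (2): passing from the "clean" statement for convex functions of one real variable to the composition $\bar u_n\circ \bar c$ with a merely Lipschitz curve in the plane. A convex function of one variable has monotone derivative and its pointwise convergence automatically gives convergence of derivatives at differentiability points of the limit, but $f_n = \bar u_n\circ\bar c$ need not be convex when $\bar c$ is not affine. The way around this is not to ask for convexity of $f_n$ but to use that $\bar u_n$ is convex in $\mathbb D$: for fixed $t$ where $\bar c'(t)$ exists, $(\bar u_n\circ\bar c)'(t)$ equals the directional derivative $D_{\bar c'(t)}\bar u_n(\bar c(t))$ wherever $\bar u_n$ is differentiable at $\bar c(t)$, and directional derivatives of convex functions converge at points of differentiability of the limit whenever the functions converge pointwise. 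So the real content is: for a.e.\ $t$, $\bar u$ is differentiable at $\bar c(t)$ (this is where the subsequence extraction and a Fubini/Rademacher-type argument along the curve enter, using that the non-differentiability set of the convex limit is a countable union of rectifiable curves and hence meets the Lipschitz curve $\bar c$ in a set of measure zero in the parameter — this is the delicate measure-theoretic point), and then $\bar c'(t)$ exists, and then one applies the directional-derivative convergence together with the chain rule $(\bar u_n\circ\bar c)'(t) = \langle \nabla\bar u_n(\bar c(t)), \bar c'(t)\rangle$. Chasing through the change of variables of Lemma~\ref{relation distances} to return to $u_n\circ c$ is then routine, since $u\mapsto \bar u$ and its inverse are smooth and bounded away from the degenerate regime by uniform boundedness.
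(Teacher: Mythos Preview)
Your reduction to $\bar u_n\circ\bar c$ via Lemma~\ref{relation distances}, and the use of the uniform local Lipschitz bound on the $\bar u_n$ coming from convexity plus uniform boundedness, are exactly what the paper does. The gap is in the step you yourself flag as delicate.

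The claim that the non-differentiability set of the convex limit $\bar u$ ``meets the Lipschitz curve $\bar c$ in a set of measure zero in the parameter'' is false. That set is indeed contained in countably many rectifiable curves, but the image of $\bar c$ is itself a rectifiable curve and may sit entirely inside one of them. Take $\bar u(x,y)=|x|$ and $\bar c(t)=(0,t)$: then $\bar u$ is differentiable at \emph{no} point $\bar c(t)$, yet $(\bar u\circ\bar c)'\equiv 0$. So the route ``full differentiability of $\bar u$ at $\bar c(t)$ for a.e.\ $t$, hence gradient convergence, hence chain rule'' does not close.

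The fix is to drop the full gradient and work only with the directional derivative along $\bar c'(t)$. At any $t$ where $\bar c'(t)$ and $(\bar u\circ\bar c)'(t)$ both exist and $\bar c'(t)\neq 0$, set $g_n(s)=\bar u_n(\bar c(t)+s\,\bar c'(t))$ and $g(s)=\bar u(\bar c(t)+s\,\bar c'(t))$. These are convex in $s$, $g_n\to g$ pointwise, and (using that $\bar u,\bar u_n$ are Lipschitz to absorb the $o(h)$ in $\bar c(t+h)=\bar c(t)+h\bar c'(t)+o(h)$) one gets $g'(0)=(\bar u\circ\bar c)'(t)$ and $g_n'(0)=(\bar u_n\circ\bar c)'(t)$. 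Now your one-variable fact --- derivatives of convex functions converge at differentiability points of the limit --- applies cleanly and gives $g_n'(0)\to g'(0)$. When $\bar c'(t)=0$ both sides vanish. This is precisely the paper's argument, phrased geometrically: it slices the convex graphs $S_{\bar u_n}$ by the vertical $2$-plane $P$ over the line $\bar c(t)+\mathbb R\,\bar c'(t)$, takes normals $N_n$ to the convex planar curves $S_{\bar u_n}\cap P$, and passes the supporting-hyperplane inequality to the limit; the resulting limit normal is forced to be the (unique, under the chosen normalization) normal to $S_{\bar u}\cap P$, which encodes exactly $g_n'(0)\to g'(0)$.
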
  
 \begin{proof}
 The following proof is a straightforward  adaptation of \cite[Lemma 3.6]{FFDS}. We first  prove the Lemma   for the corresponding functions $\bar{u}_n$ and $\bar{u}$, then we deduce   the proof for $u_n$ and $u$  using continuity and Lemma~\ref{relation distances}. We consider that $\bar{c}$ is parameterized by arc-length.
 
Let   $\langle \cdot ,\cdot\rangle$  be the  the Euclidean metric on the affine cylinder, and we use the notation $\begin{pmatrix}
 {a} \\
 {b} \\
 \end{pmatrix}$, with  $a\in\mathbb{D}$ and $b\in\mathbb{R}$. Let $t$ be such that the derivatives exist.
Let $X$ be the unit  vector  $
 \begin{pmatrix}
 {0} \\
 {1} \\
 \end{pmatrix}
 $
 and  $Y$ the unit vector $ \begin{pmatrix}
  {\bar{c}'(t)} \\
  {0} \\
  \end{pmatrix}
$, we have $ \langle X,Y \rangle $=0. The tangent vector to the curve $ \begin{pmatrix}
  {\bar{c}} \\
  {\bar{u}_n\circ \bar{c}} \\
  \end{pmatrix}
$     at every point  $ \begin{pmatrix}
  {\bar{c}(t)} \\
  {(\bar{u}_n\circ \bar{c})(t)} \\
  \end{pmatrix}
$ 
 is given by  $$V_n=(\bar{u}_n\circ \bar{c}(t))'X+Y$$ and in the plane $P$ spanned by $X$ and $Y$, the vector 
 $$N_n=(\bar{u}_n\circ \bar{c}(t))'Y-X$$ 
 is orthogonal to $V_n$ for $\langle\cdot,\cdot\rangle$. Now because  $\bar{u}_n$ and $\bar{u}$ are equi-Lipschitz on any compact set of $\mathbb{D}$   (see  \cite[Theorem 10.6]{Roc97})  
 then  there exists $k$ such that $|(\bar{u}_n\circ \bar{c})'(t)|\leq k$ 
 for all $n\in\mathbb{N}$, then  
  $$\parallel N_n\parallel\leq |(\bar{u}_n\circ \bar{c})'(t)|\parallel Y\parallel+\parallel X\parallel \leq |(\bar{u}_n\circ \bar{c})'(t)|+1\leq k+1 $$
 so $\parallel N_n\parallel$ are    uniformly bounded. Hence, up to extracting a subsequence $(N_n)_n$ converges to a vector $N$. Note that $N$ is not the zero vector, otherwise $\langle N_n,X\rangle$ would converge to $0$, that is impossible because $\langle N_n,X\rangle=-1$.
 
 Let $T_n$ be the intersection of the convex surface $S_{\bar{u}_n}$ defined  by $\bar{u}_n$ and the plane $P$.  The set $T_n$ is a convex set in $P$, and $V_n$ is a tangent vector, hence by convexity for any $\bar{y}\in\mathbb{D}\cap P$,
 $$\langle N_n, \begin{pmatrix}
   {\bar{c}(t)} \\
   {\bar{u}_n\circ \bar{c}(t)} \\
   \end{pmatrix} - \begin{pmatrix}
     {\bar{y}} \\
     {\bar{u}_n(\bar{y})} \\
     \end{pmatrix}\rangle\geq 0,$$
and passing to the limit we get 
$$\langle N, \begin{pmatrix}
   {\bar{c}(t)} \\
   {\bar{u}\circ \bar{c}(t)} \\
   \end{pmatrix} - \begin{pmatrix}
     {\bar{y}} \\
     {\bar{u}(\bar{y})} \\
     \end{pmatrix}\rangle\geq 0,$$
     this says that $N$ is a normal vector to $T$ (the intersection of $S_{\bar{u}}$ with $P$), hence  
     $$\langle N, (\bar{u}\circ \bar{c})'(t)\begin{pmatrix}
        {0} \\
        {1} \\
        \end{pmatrix} + \begin{pmatrix}
          {\bar{c}'(t)} \\
          {0} \\
          \end{pmatrix}\rangle=0.$$
          So there exists $\lambda$ such that 
          $$ (\bar{u}\circ \bar{c})'(t)\begin{pmatrix}
                  {0} \\
                  {1} \\
                  \end{pmatrix} + \begin{pmatrix}
                    {\bar{c}'(t)} \\
                    {0} \\
                    \end{pmatrix} =\lambda\underset{n\rightarrow\infty}{\lim}(\bar{u}_n\circ \bar{c})'(t)\begin{pmatrix}
                            {0} \\
                            {1} \\
                            \end{pmatrix} + \begin{pmatrix}
                              {\lambda \bar{c}'(t)} \\
                              {0} \\
                              \end{pmatrix}.$$

 By identification it follows  that $\lambda=1$, hence $(\bar{u}_n\circ \bar{c})'(t)$ must converge to $(\bar{u}\circ \bar{c})'(t)$.
  The functions $\bar{u}_n\circ \bar{c}$ and $u_n\circ c$ are defined from $I\subset \mathbb{R}$ to $\mathbb{R}$, by Lemma~ \ref{relation distances}

$$u_n\circ c(t)=\arctan\left(\frac{\bar{u}_n\circ \bar{c}(t)}{h(t)}\right)$$

   
 where $h(t)=-\sqrt{1-\parallel \bar{c}(t)\parallel^2}$, hence  $u_n\circ c$ is clearly differentiable almost everywhere  for all $n\in\mathbb{N}$ and   

    \begin{equation}
     (u_n\circ c)'(t)=\frac{(\bar{u}_n\circ\bar{c})'(t)h(t)-(\bar{u}_n\circ \bar{c})(t)h'(t)}{h^2(t)+(\bar{u}_n\circ\bar{c})^2(t)}    \label{relderiv}
   \end{equation}

    also we have (by hypothesis) for almost all $t$, that 
    
    $$(u_n\circ c)(t)\underset{n\rightarrow\infty}{\longrightarrow} (u\circ c)(t)$$

       hence by  continuity  (in the relation given by Lemma \ref{relation distances}) it is clear that, 
    $$(\bar{u}_n\circ\bar{c})(t)\underset{n\rightarrow\infty}{\longrightarrow} (\bar{u}\circ\bar{c})(t)$$
    
    then by the preceding arguments and  by continuity again in \eqref{relderiv} and passing to the limit, it follows that $(u_n\circ c)'(t) $ converge to  $(u\circ c)'(t)$.
   \end{proof}


Let $u:\mathbb{H}^2\to \mathbb{R}$ be a C-convex function. 
 For $c:[0,1]\to \mathbb{H}^2$ a Lipschitz curve, 
 $(c,u \circ c)$ is a curve on $S_u$, and its length for the anti-de Sitter metric is 
 \begin{equation}
\mathcal{L}_u (c)=\displaystyle{\int_{0}^{1}\sqrt{\cos^2(u\circ c(t))\| c'(t)\|_{\mathbb{H}^2}^2-(u\circ {c})'^2(t)}}dt~.       \label{lengthstructure}
 \end{equation}

  By  Lemma \ref{deriva} above and using the dominated convergence Theorem, we get the following proposition.
 
  \begin{proposition}\label{convstruc}
Let $u_n:\mathbb{H}^2\to \mathbb{R}$ be uniformly bounded  C-convex functions converging to a C-convex function $u$, and let $c:I\rightarrow\mathbb{H}^2$ be a Lipschitz curve. Up to extracting a subsequence, $\mathcal{L}_{u_n}(c)\rightarrow \mathcal{L}_u(c).$ 
   \end{proposition}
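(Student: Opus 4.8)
The plan is to apply the dominated convergence theorem directly to the integral expression \eqref{lengthstructure} for $\mathcal{L}_{u_n}(c)$, using Lemma~\ref{deriva} to control the integrand pointwise almost everywhere. First I would fix the Lipschitz curve $c\colon I\to\mathbb{H}^2$; since $I$ is compact and $c$ is Lipschitz, its image is contained in a compact subset $K\subset\mathbb{H}^2$, and $\|c'(t)\|_{\mathbb{H}^2}$ is bounded by the Lipschitz constant of $c$ for almost every $t$. By Lemma~\ref{deriva}, after passing to a subsequence we have $(u_n\circ c)'(t)\to(u\circ c)'(t)$ for almost every $t\in I$; moreover $(u_n\circ c)(t)\to(u\circ c)(t)$ pointwise (indeed uniformly, by Lemma~\ref{lem:subseq cv}), so $\cos^2(u_n\circ c(t))\to\cos^2(u\circ c(t))$ as well. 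Hence the integrand
\[
f_n(t)=\sqrt{\cos^2(u_n\circ c(t))\,\|c'(t)\|_{\mathbb{H}^2}^2-(u_n\circ c)'^2(t)}
\]
converges to the corresponding integrand $f(t)$ for almost every $t\in I$. (Note that $f_n(t)$ is well-defined and real: the quantity under the square root is nonnegative precisely because $S_{u_n}$ is space-like by Lemma~\ref{convex then spacelike}, so the induced length element is real.)

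Next I would produce an integrable dominating function. Since the functions $u_n$ are uniformly bounded with $u_n\leq R<\pi/2$, we have $0\le\cos^2(u_n\circ c(t))\le 1$, so
\[
0\le f_n(t)^2\le \cos^2(u_n\circ c(t))\,\|c'(t)\|_{\mathbb{H}^2}^2\le \|c'(t)\|_{\mathbb{H}^2}^2\le L^2,
\]
where $L$ is the (essential) Lipschitz bound of $c$ in $\mathbb{H}^2$ on the compact set $K$. Thus $f_n(t)\le L$ for almost every $t$, and the constant $L$ is integrable on the compact interval $I$. The dominated convergence theorem then yields
\[
\mathcal{L}_{u_n}(c)=\int_I f_n(t)\,dt\ \longrightarrow\ \int_I f(t)\,dt=\mathcal{L}_u(c)
\]
along the chosen subsequence, which is the assertion.

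The only genuine subtlety — and the step I would be most careful about — is the justification that the subsequence from Lemma~\ref{deriva} can be taken so that \emph{both} the pointwise convergence of derivatives and the domination hold simultaneously; but this is automatic, since Lemma~\ref{deriva} already furnishes a single subsequence along which $(u_n\circ c)'\to(u\circ c)'$ a.e., and the uniform bound $f_n\le L$ holds for the whole sequence, not just a subsequence. One should also note that the nonnegativity of the radicand for the limit function $u$ is guaranteed again by Lemma~\ref{convex then spacelike}, so $f$ is a bona fide (real, nonnegative, bounded) integrand and $\mathcal{L}_u(c)$ is well-defined. No further estimates are needed.
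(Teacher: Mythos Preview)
Your argument is correct and is exactly the approach the paper indicates: it derives Proposition~\ref{convstruc} from Lemma~\ref{deriva} together with the dominated convergence theorem, using $\cos^2\le 1$ and the Lipschitz bound on $c'$ to obtain an integrable majorant. There is nothing to add.
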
 
  
 The induced (intrinsic) metric $d_{S_u}$ on $S_u$ is the  pseudo-distance    induced by $\mathcal{L}_u$: 
for $x,y\in S_u$, $d_{S_u}(x,y)$  is the  infimum of the lengths of Lipschitz curves between $x$ and $y$ contained in $S_u$. Note that as the AdS cylinder has a Lorentzian metric, the induced distance between two distinct points on $S_u$ may be equal to $0$, that is a major difference with the case of induced metrics on surfaces in a Riemannian space.

 \begin{definition} \label{d_u}
We denote  by $d_u$ the pull-back of $d_{S_u}$ on $\mathbb{H}^2$, so that for every point $x,y\in\mathbb{H}^2$
$$d_u(x,y)=d_{S_u}((x,u(x)),(y,u(y))).$$        
 \end{definition}

 From \eqref{lengthstructure}, as $\cos\leq 1$, we clearly have the following.
  
 \begin{lemma} \label{major}
 With the notations above, for $x,y\in \mathbb{H}^2$,
 $d_u(x,y)\leq d_{\mathbb{H}^2}(x,y).$
 \end{lemma}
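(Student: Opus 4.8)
The plan is to argue directly from the explicit length formula \eqref{lengthstructure}, using nothing more than the elementary bound $\cos\le 1$.

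First I would unwind the definitions. By Definition~\ref{d_u}, $d_u(x,y)=d_{S_u}\big((x,u(x)),(y,u(y))\big)$, and $d_{S_u}$ is the infimum of $\mathcal{L}_u(c)$ over Lipschitz curves $c\colon[0,1]\to\mathbb{H}^2$ with $c(0)=x$ and $c(1)=y$, where $(c,u\circ c)$ is the associated curve on $S_u$ and $\mathcal{L}_u(c)$ its anti-de Sitter length as in \eqref{lengthstructure}. Since $S_u$ is space-like by Lemma~\ref{convex then spacelike}, and since $u\circ c$ is differentiable almost everywhere by Rademacher's theorem (as recorded just before Lemma~\ref{deriva}), the integrand in \eqref{lengthstructure} is a well-defined nonnegative real number for almost every $t$.

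Next, for any such curve $c$ and for almost every $t$, since $\cos^2\le 1$ and $(u\circ c)'^2(t)\ge 0$,
$$\cos^2(u\circ c(t))\,\|c'(t)\|_{\mathbb{H}^2}^2-(u\circ c)'^2(t)\ \le\ \|c'(t)\|_{\mathbb{H}^2}^2~.$$
Taking square roots and integrating over $[0,1]$ gives $\mathcal{L}_u(c)\le \int_0^1\|c'(t)\|_{\mathbb{H}^2}\,dt=\mathrm{length}_{\mathbb{H}^2}(c)$. Finally I would take the infimum over all admissible $c$: choosing $c$ to be a Lipschitz parametrization of the minimizing hyperbolic geodesic from $x$ to $y$ yields $d_u(x,y)\le \mathcal{L}_u(c)\le \mathrm{length}_{\mathbb{H}^2}(c)=d_{\mathbb{H}^2}(x,y)$, as claimed.

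There is essentially no genuine obstacle here: the inequality is a pointwise comparison of integrands. The only points to be careful about are the almost-everywhere differentiability of $u\circ c$ (so that \eqref{lengthstructure} makes sense) and the fact, guaranteed by Lemma~\ref{convex then spacelike}, that $S_u$ is space-like so that the quantity under the square root is nonnegative and $\mathcal{L}_u(c)$ is a nonnegative real number.
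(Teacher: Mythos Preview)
Your argument is correct and is exactly the one-line observation the paper makes: the inequality follows directly from \eqref{lengthstructure} using $\cos\le 1$ (and dropping the nonpositive term $-(u\circ c)'^2$). You have simply spelled out the details.
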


\section{Fuchsian invariance}\label{sec:fuchsian}

\subsection{Convergence of surfaces implies convergence of metrics}
The aim of this section is to state Proposition~\ref{5.10}. The arguments are quite general and close to the ones of \cite{FFDS}. The main point is Lemma~\ref{minor} below, that is the AdS analogue of Corollary 3.11 in 
\cite{FFDS}.

 Recall that a Fuchsian group is a discrete group of orientation preserving isometries acting  on the hyperbolic plane.
 In the present article, we will restrict this definition to the groups acting moreover freely and  cocompactly. 
 \begin{definition} 
  A \emph{Fuchsian C-convex   function} is a couple $(u,\Gamma)$, where  $u$ is a C-convex function and $\Gamma$ is a Fuchsian  group such that for all $\sigma\in\Gamma$ we have 
  $u\circ\sigma=u$.
  \end{definition}
  
  We will often abuse terminology, speaking about Fuchsian for a single function $u$, so that the Fuchsian group will remain implicit.
  
\begin{definition}\label{3.9}
Let $(\Gamma_n)_n$ be a sequence of discrete groups.  $(\Gamma_n)_n$ converges to a group $\Gamma$  if  there exist isomorphisms $\tau_n:\Gamma\rightarrow\Gamma_n$ such that for all $\sigma\in\Gamma,\tau_n(\sigma)$ converge to $\sigma$. 
\end{definition} 
 
 \begin{definition}
We say that a sequence of Fuchsian C-convex functions $(u_n,\Gamma_n)_n$ converges to a pair $(u,\Gamma)$, if $u$ is a C-convex function,  $\Gamma$ is a Fuchsian group such that  $(u_n)_n$ converges to $u$ and $(\Gamma_n)_n$ converges to $\Gamma$.
 \end{definition} 
 
It is easy to see that if $(u_n,\Gamma_n)$ is  a  sequence  of  Fuchsian  C-convex  functions  that converges to a pair  $(u,\Gamma)$, then $(u,\Gamma)$ is a Fuchsian C-convex function, see e.g. \cite[Lemma 3.17]{FFDS}.
Recall the definition of the distance $d_u$ from Definition~\ref{d_u}. Recall also that a C-convex function is differentiable almost everywhere. At a point where $u$ is differentiable, we denote by $\parallel \cdot \parallel_u$ the norm induced 
by the ambient anti-de Sitter metric on the tangent of $S_u$ at this point.

\begin{lemma}\label{minor}\label{converg}
Let $u$ be a C-convex function. Let  $K:=\inf(\parallel v \parallel_u/\parallel v\parallel_{\mathbb{H}^2})$, and let $d_{\mathbb{H}^2}$ be the distance given by the hyperbolic metric (for instance, $d_{\mathbb{H}^2}=d_u$ for $u=0$). Then $d_u(x,y)\geq  K d_{\mathbb{H}^2}(x,y)$.

Moreover, if $u$ is Fuchsian, then $K>0$.
\end{lemma}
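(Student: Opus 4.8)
The plan is to establish the two assertions separately: first the pointwise inequality $d_u \geq K\, d_{\mathbb{H}^2}$ for an arbitrary C-convex $u$, and then the positivity $K>0$ under the extra Fuchsian hypothesis.

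For the first part, I would argue directly from the definition of the induced length structure \eqref{lengthstructure}. Fix a Lipschitz curve $c\colon[0,1]\to\mathbb{H}^2$; then $(c,u\circ c)$ is a curve on $S_u$ with velocity lying in the tangent space of $S_u$, so by the definition of $K$ its anti-de Sitter speed is at least $K$ times its hyperbolic speed at almost every $t$ where $u\circ c$ is differentiable. Integrating, $\mathcal{L}_u(c) \geq K\,\mathcal{L}_{\mathbb{H}^2}(c) \geq K\, d_{\mathbb{H}^2}(c(0),c(1))$. One subtlety to address is that a curve realizing (or nearly realizing) $d_{S_u}$ between two points of $S_u$ is a priori a Lipschitz curve in the ambient AdS cylinder, not necessarily of the graph form $(c,u\circ c)$; but since $S_u$ is a graph over $\mathbb{H}^2$ via the first projection, any Lipschitz curve on $S_u$ does project to a Lipschitz curve $c$ in $\mathbb{H}^2$ and equals $(c,u\circ c)$, so this is harmless. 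Taking the infimum over curves between $(x,u(x))$ and $(y,u(y))$ yields $d_u(x,y)\geq K\,d_{\mathbb{H}^2}(x,y)$.

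For the second part, the point is to show $K=\inf(\parallel v\parallel_u/\parallel v\parallel_{\mathbb{H}^2})>0$ when $u$ is $\Gamma$-invariant for a cocompact Fuchsian $\Gamma$. The ratio $\parallel v\parallel_u/\parallel v\parallel_{\mathbb{H}^2}$ at a point $x$ where $u$ is differentiable depends, by the formula $g_{AdS}(x,t)=\cos^2(t)g_{\mathbb{H}^2}(x)-dt^2$, only on $u(x)$ and the differential $du_x$: explicitly, for a unit hyperbolic tangent vector $v$ one gets $\parallel v\parallel_u^2 = \cos^2(u(x)) - (du_x(v))^2$. So positivity of the infimum amounts to a uniform upper bound of the form $\cos^2(u(x)) - (du_x(v))^2 \geq \varepsilon > 0$, i.e. a uniform bound on $u$ away from $\pi/2$ (which holds since $u$ is C-convex, $u\leq R<\pi/2$) together with a uniform bound on $\parallel du_x\parallel_{\mathbb{H}^2}$ strictly below $\cos(u(x))$. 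The Fuchsian invariance enters here: $u$ descends to a function on the closed surface $S=\mathbb{H}^2/\Gamma$, and a convex (hence locally Lipschitz) function on a compact manifold has a uniformly bounded gradient; moreover $S_u$ is space-like by Lemma~\ref{convex then spacelike}, so at every point of differentiability the tangent vector has positive AdS-norm, i.e. the ratio is pointwise positive, and a positive continuous (or upper/lower semicontinuous) function on the compact quotient attains a positive minimum. The honest way to run this is: by $\Gamma$-invariance it suffices to take the infimum over $x$ ranging in a compact fundamental domain $D$ and $v$ ranging over the (compact) hyperbolic unit tangent bundle over $D$; on this compact set the function $(x,v)\mapsto \parallel v\parallel_u/\parallel v\parallel_{\mathbb{H}^2}$ is positive wherever $u$ is differentiable, and one upgrades this to a uniform positive lower bound using the equi-Lipschitz/convexity control on $u$ exactly as in the companion argument \cite[Corollary 3.11]{FFDS}.

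I expect the main obstacle to be precisely this last upgrade — getting an \emph{honest uniform} positive lower bound despite $u$ being only almost-everywhere differentiable, so that the naive "continuous positive function on a compact set" argument does not literally apply to $\parallel v\parallel_u/\parallel v\parallel_{\mathbb{H}^2}$. The resolution is to work instead with the finite quantity $\sup_{x\in D}\parallel du_x\parallel_{\mathbb{H}^2}$, which is finite because a convex function on $\mathbb{D}$ restricted to a compact set is Lipschitz (Rademacher plus the local Lipschitz bound \cite[Theorem 10.6]{Roc97}), and to feed this single bound, together with the uniform bound $u\leq R<\pi/2$, into the closed-form expression $\cos^2(u(x)) - (du_x(v))^2$. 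The one thing that must be checked is that $\sup_{x\in D}\parallel du_x\parallel_{\mathbb{H}^2} < \cos(R)$ — or rather that at each point of differentiability $\parallel du_x\parallel_{\mathbb{H}^2} < \cos(u(x))$, which is exactly the space-like condition of Lemma~\ref{convex then spacelike}, and that this strict inequality survives passage to the compact quotient; this is where C-convexity (not just convexity of $\bar u$) is used, via the fact that the cone construction of Lemma~\ref{passe par le disc}(3) keeps the support planes uniformly space-like over a compact region.
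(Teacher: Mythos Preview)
Your first part is exactly the paper's argument. For the second part you take a genuinely different route. The paper argues by contradiction: assuming $K=0$, it picks points $x_n$ and unit vectors $v_n$ with $\|v_n\|_u\to 0$, translates by hyperbolic isometries $\sigma_n$ so that $(x_n,v_n)$ is sent to a fixed $(x,v)$, sets $u_n=u\circ\sigma_n$, uses the Fuchsian bound $u\le\beta<\pi/2$ to apply Lemma~\ref{lem:subseq cv} and extract a C-convex limit $u_0$, and concludes that $S_{u_0}$ has a light-like support plane at $(x,v)$, contradicting Lemma~\ref{convex then spacelike}. Your route is instead direct: reduce to a compact fundamental domain $D$, use local Lipschitzness of convex functions to bound $\sup_D\|du\|$, and then assert that the pointwise strict inequality $\|du_x\|<\cos(u(x))$ is \emph{uniformly} strict over $D$ because the cone construction of Lemma~\ref{passe par le disc}(3) keeps support planes ``uniformly space-like over a compact region''. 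This last claim is true, but you only assert it: the honest justification is that as $p=(\bar x,\bar u(\bar x))$ ranges over the compact set above $\bar D$, the support planes of $S_{\bar u}$ at $p$ lie among the support planes of the cones $C_p$, and these form a compact family of space-like planes (a continuous image of $\bar D\times\partial\mathbb{D}$), hence are uniformly bounded away from the light-cone. The paper's contradiction argument is cleaner precisely because it avoids having to prove this separate uniform estimate --- it recycles the already-available compactness of C-convex functions (Lemma~\ref{lem:subseq cv}) and the qualitative statement of Lemma~\ref{convex then spacelike} to do the work. Your approach works once that uniform-space-likeness step is filled in, and has the virtue of being constructive rather than by contradiction.
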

\begin{proof}
Let $c$ be a Lipschitz curve between two points $x,y \in \mathbb{H}^2$. Let 
$v$ be the tangent vector field of $(c,u\circ c)$ whenever it exists.
We have 
$$\mathcal{L}_u(c)=\int_a^b \parallel v\parallel_u \geq K\int_a^b \parallel v\parallel_{\mathbb{H}^2} \geq Kd_{\mathbb{H}^2}(x,y)$$
and the first result follows as by definition $d_u(x,y)$ is an infimum of lengths.

Now let us suppose that $u$ is Fuchsian. 
Let us suppose that $K=0$, i.e. there is a sequence $(x_n)_n$ such that $u$ is differentiable at each $x_n$, and 
 $v_n\neq 0$ in  $T_{x_n}\mathbb{H}^2$ such that $\parallel v_n \parallel_u/\parallel v_n\parallel_{\mathbb{H}^2}\to 0$. Without loss of generality, let us consider that  $\parallel v_n\parallel_{\mathbb{H}^2}=1$. 
Let $\sigma_n$ be isometries of $\mathbb{H}^2$ that send $(x_n,v_n)$ to a given pair $(x,v)$, and let $u_n:=u\circ \sigma_n$. 
As $u$ is Fuchsian, there exists $\beta <\pi/2$ such that $u\leq \beta$, and in turn 
$u_n\leq \beta$. By Lemma~\ref{lem:subseq cv}, up to consider a subsequence,
$(u_n)_n$ converges to a C-convex function $u_0$.
As we supposed that $\parallel v_n\parallel_u\to 0$, then $S_{u_0}$ must have a light-like support plane, that contradicts Lemma~\ref{convex then spacelike}.
\end{proof}

Note that Lemma~\ref{minor} indicates that in the Fuchsian case, $d_u$ is a distance and not only a pseudo-distance.

Let us recall the following classical result, see e.g. Lemma 3.14 in \cite{FFDS}. The homeomorphisms
in the statement below could also be constructed by hand, for example using canonical polygons as fundamental domains for the Fuchsian groups, see Section~6.7 in \cite{buser}.
 
 \begin{lemma} \label{tildephi}
 Let $(\Gamma_n)_n$ be a sequence of Fuchsian groups converging to a group $\Gamma$ and  $\tau_n$  the isomorphisms  given in Definition \ref{3.9}. There exist homeomorphisms $\phi_n:\mathbb{H}^2/\Gamma\longrightarrow\mathbb{H}^2/\Gamma_n$  whose lifts $\tilde{\phi}_n$ satisfy for any $\sigma\in\Gamma$, 
 \begin{equation*}
 \tilde{\phi}_n\circ\sigma=\tau_n(\sigma)\circ\tilde{\phi}_n   \label{**}
 \end{equation*}
 and such that  $(\tilde{\phi}_n)_n$ converges to the identity map uniformly on  compact sets i.e \begin{equation*} 
 \forall x\in\mathbb{H}^2, \tilde{\phi}_n(x)\underset{n\rightarrow\infty}{\longrightarrow}x  \label{*}
 \end{equation*} 
 \end{lemma}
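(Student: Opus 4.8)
The plan is to construct the homeomorphisms $\phi_n$ together with their lifts $\tilde\phi_n$ by interpolating between fundamental domains, and then to verify the equivariance and convergence properties directly. First I would fix a point $x_0\in\mathbb{H}^2$ whose $\Gamma$-orbit is free, and take a compact fundamental polygon $D\subset\mathbb{H}^2$ for $\Gamma$ centered at $x_0$ — for instance a Dirichlet domain — with its side-pairing transformations $\sigma_1,\dots,\sigma_{2g}\in\Gamma$ generating $\Gamma$ subject to the single surface relation. Since $\Gamma_n\to\Gamma$ in the sense of Definition~\ref{3.9}, the generators $\tau_n(\sigma_i)$ converge to $\sigma_i$; standard facts about Fuchsian groups (or an appeal to the compactness of moduli space, as in \cite[Section 6.7]{buser}) give, for $n$ large, a compact fundamental polygon $D_n$ for $\Gamma_n$ with side-pairings $\tau_n(\sigma_i)$ and with vertices depending continuously on the $\tau_n(\sigma_i)$, so that $D_n\to D$ in the Hausdorff sense and the combinatorics of the side identifications stabilize.

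Next I would build a homeomorphism $\psi_n:D\to D_n$ that respects the side-pairing pattern: it should send the side of $D$ glued by $\sigma_i$ to the side of $D_n$ glued by $\tau_n(\sigma_i)$, compatibly with $\sigma_i$ versus $\tau_n(\sigma_i)$ on those sides, and it should restrict to an appropriate simplicial or piecewise-projective map on $\partial D$. Because $D_n\to D$, one can take $\psi_n$ to be, say, the barycentric/coning extension of the boundary map (triangulating $D$ from $x_0$), which then converges uniformly to the identity on $D$. I would then \emph{tile}: define $\tilde\phi_n$ on $\mathbb{H}^2=\bigcup_{\sigma\in\Gamma}\sigma D$ by setting $\tilde\phi_n(\sigma x)=\tau_n(\sigma)\,\psi_n(x)$ for $x\in D$. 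The side-pairing compatibility of $\psi_n$ is exactly what makes this well-defined across the common boundaries of adjacent tiles, and it yields a homeomorphism $\mathbb{H}^2\to\mathbb{H}^2$ that by construction satisfies $\tilde\phi_n\circ\sigma=\tau_n(\sigma)\circ\tilde\phi_n$ for every $\sigma\in\Gamma$. It therefore descends to a homeomorphism $\phi_n:\mathbb{H}^2/\Gamma\to\mathbb{H}^2/\Gamma_n$.

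It remains to prove $\tilde\phi_n(x)\to x$ uniformly on compact sets. Fix a compact $K\subset\mathbb{H}^2$; it meets only finitely many tiles $\sigma D$, say for $\sigma$ in a finite set $F\subset\Gamma$. On the tile $\sigma D$ we have $\tilde\phi_n(\sigma x)=\tau_n(\sigma)\psi_n(x)$, while the identity is $\sigma x$; since $\tau_n(\sigma)\to\sigma$ in $\mathrm{Isom}(\mathbb{H}^2)$ for each of the finitely many $\sigma\in F$, and $\psi_n\to\mathrm{id}_D$ uniformly on the compact $D$, the composition $\tau_n(\sigma)\circ\psi_n$ converges uniformly on $D$ to $\sigma$. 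Taking the maximum over the finite set $F$ gives uniform convergence of $\tilde\phi_n$ to the identity on $K$.

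The main obstacle I anticipate is the first half: producing, for $n$ large, fundamental polygons $D_n$ for $\Gamma_n$ with the \emph{same} side-pairing combinatorics as $D$ and converging to $D$ — i.e.\ making precise that a small perturbation of the generators of a cocompact Fuchsian group yields a small perturbation of a canonical fundamental polygon, with no collapsing of sides or change in the identification pattern. This is where one either invokes the cited classical results (\cite[Lemma 3.14]{FFDS}, \cite[Section 6.7]{buser}) or argues by continuity of Dirichlet domains under the Chabauty-type convergence of Definition~\ref{3.9}; once that is in hand, the equivariance is formal and the uniform convergence is the elementary finiteness argument above.
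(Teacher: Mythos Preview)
Your approach is correct and is precisely the ``by hand'' construction the paper alludes to: the paper does not give its own proof of this lemma but records it as a classical result, citing \cite[Lemma~3.14]{FFDS} and noting that the homeomorphisms can be built from canonical fundamental polygons as in \cite[Section~6.7]{buser}. Your outline---perturbing a fixed fundamental polygon, matching side-pairings, extending equivariantly by tiling, and checking convergence via the finitely many tiles meeting a compact set---is exactly that construction, and you have correctly isolated the one nontrivial step (stability of the polygon combinatorics under small perturbations of the generators) as the place where the cited references are needed.
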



 Now, let $u$ be a $C$-convex function and $S_u$ the surface described by $u$.   The length structure $\mathcal{L}_u$ given by $\eqref{lengthstructure}$ induces a (pseudo-)distance $d_{S_u}$.  In turn,   $d_{S_u}$  induces  a  length  structure  denoted  by $L_{d_{S_u}}$ defined in the following way:  the length of a curve $(c,u\circ c):[0,1]\rightarrow S_u$ is defined as 

\begin{align*}
L_{d_{S_u}}(c,u\circ c)&=\underset{\delta}{\sup}\sum\limits_{i=1}^{n}d_{S_u}((c(t_i),u\circ c(t_i)),(c(t_{i+1}), u\circ c(t_{i+1}))),\\
&=\underset{\delta}{\sup}\sum\limits_{i=1}^{n}d_u(c(t_i),c(t_{i+1}))=L_{d_u}(c) ~~~~~~(\text{see definition \ref{d_u} })
\end{align*}

 where the sup is taken over all the decompositions $$\delta=\{(t_1\ldots t_n)|t_1=0\leq t_2\leq...\leq t_n =1\},$$
    

  We have the following proposition

 \begin{proposition}\label{converg}
    Let $(u_n)_n$ be a sequence of convex functions such that:\begin{itemize}
    \item $d_{u_n}$ is a complete distance  with Lipschitz shortest paths,
    \item $\mathcal{L}_{u_n}=L_{d_{u_n}}$ on the set of Lipschitz curves,
     \item There exists $0< R<\pi/2$ with $0\leq u_n<R $,
        \end{itemize}
     Then, up to extracting a subsequence, $(u_n)_n$ converges  to a convex function $u$ and $(d_{u_n})_n$ converges to $d_u$ uniformly on compact sets.\\

   \end{proposition}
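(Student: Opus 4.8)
The plan is to combine the compactness from Lemma~\ref{lem:subseq cv} (convergence of the convex functions themselves) with the convergence of the length structures from Proposition~\ref{convstruc}, and then promote convergence of lengths to convergence of the induced distances uniformly on compact sets. First I would apply Lemma~\ref{lem:subseq cv}: since the $u_n$ are uniformly bounded C-convex functions (the third hypothesis gives a common bound $R<\pi/2$), up to a subsequence $(u_n)_n$ converges uniformly on compact sets to a C-convex function $u$. It then remains to show $d_{u_n}\to d_u$ uniformly on compact sets of $\mathbb{H}^2\times\mathbb{H}^2$.

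For the upper bound on the limit, fix $x,y\in\mathbb{H}^2$ and a Lipschitz curve $c$ from $x$ to $y$ in $\mathbb{H}^2$; by Proposition~\ref{convstruc}, up to a further subsequence $\mathcal{L}_{u_n}(c)\to\mathcal{L}_u(c)$, and since $d_{u_n}(x,y)\le\mathcal{L}_{u_n}(c)$ we get $\limsup_n d_{u_n}(x,y)\le\mathcal{L}_u(c)$; taking the infimum over $c$ gives $\limsup_n d_{u_n}(x,y)\le d_u(x,y)$. The subtle point is the diagonal argument: a priori the subsequence depends on $c$, but since $d_u(x,y)$ is an infimum one can first pick a minimizing (or near-minimizing) sequence of curves and extract once; alternatively, because by the second hypothesis $\mathcal{L}_{u_n}=L_{d_{u_n}}$ and $d_{u_n}$ has Lipschitz shortest paths, one can run the argument on the shortest paths of $d_{u_n}$ themselves. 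I would use the latter: let $c_n$ be a Lipschitz shortest path of $d_{u_n}$ from $x$ to $y$; the upper bound $d_{u_n}\le d_{\mathbb{H}^2}$ (Lemma~\ref{major}) bounds their hyperbolic lengths, so after reparametrizing by hyperbolic arc-length the $c_n$ are equi-Lipschitz, hence by Arzelà--Ascoli converge uniformly (subsequence) to a Lipschitz curve $c_\infty$ from $x$ to $y$; then a semicontinuity argument for $\mathcal{L}$ under the joint convergence $u_n\to u$, $c_n\to c_\infty$ (this is where Lemma~\ref{deriva}, applied to the converging curves, does the work) yields $\mathcal{L}_u(c_\infty)\le\liminf_n\mathcal{L}_{u_n}(c_n)=\liminf_n d_{u_n}(x,y)$, giving the lower bound $d_u(x,y)\le\liminf_n d_{u_n}(x,y)$. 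Together with the upper bound this is pointwise convergence $d_{u_n}(x,y)\to d_u(x,y)$.

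Finally, to upgrade pointwise convergence to uniform convergence on compact sets, I would invoke equicontinuity: each $d_{u_n}$ is $1$-Lipschitz with respect to $d_{\mathbb{H}^2}\times d_{\mathbb{H}^2}$ by Lemma~\ref{major} (since $|d_{u_n}(x,y)-d_{u_n}(x',y')|\le d_{u_n}(x,x')+d_{u_n}(y,y')\le d_{\mathbb{H}^2}(x,x')+d_{\mathbb{H}^2}(y,y')$), so the family $(d_{u_n})_n$ is uniformly equicontinuous on compact subsets of $\mathbb{H}^2\times\mathbb{H}^2$; combined with pointwise convergence this gives uniform convergence on compact sets by the standard Arzelà--Ascoli-type argument. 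The main obstacle I anticipate is the lower-semicontinuity step for the Lorentzian length functional $\mathcal{L}$ along the jointly converging pair $(u_n,c_n)$: unlike a Riemannian length, the integrand $\sqrt{\cos^2(u\circ c)\,\|c'\|_{\mathbb{H}^2}^2-(u\circ c)'^2}$ is only defined where the expression under the root is nonnegative (space-likeness) and is not convex in the velocities, so one must use the space-likeness guaranteed by Lemma~\ref{convex then spacelike} together with the derivative convergence of Lemma~\ref{deriva} and dominated convergence, carefully handling the set where shortest paths could become light-like in the limit — which cannot happen precisely because $u$ is C-convex, hence $S_u$ is space-like.
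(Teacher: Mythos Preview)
Your overall strategy is the right one and matches what the paper sketches: extract a convergent subsequence of the $u_n$ via Lemma~\ref{lem:subseq cv}, use Proposition~\ref{convstruc} for the upper bound $\limsup d_{u_n}\le d_u$, run Arzel\`a--Ascoli on shortest paths for the lower bound, and conclude uniform convergence from equicontinuity via Lemma~\ref{major}. The paper's own proof is just a pointer to \cite[Proposition~3.12]{FFDS}, saying the argument goes through using Proposition~\ref{convstruc} together with Lemmas~\ref{major} \emph{and}~\ref{minor}.

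There is, however, a genuine gap in your lower-bound step. You write that Lemma~\ref{major} ($d_{u_n}\le d_{\mathbb{H}^2}$) bounds the \emph{hyperbolic} lengths of the shortest paths $c_n$. It does not: it only gives $\mathcal{L}_{u_n}(c_n)=d_{u_n}(x,y)\le d_{\mathbb{H}^2}(x,y)$, which bounds the \emph{AdS} length of $c_n$. Since the integrand in \eqref{lengthstructure} satisfies $\mathcal{L}_{u_n}(c)\le L_{\mathbb{H}^2}(c)$, this inequality goes the wrong way for controlling $L_{\mathbb{H}^2}(c_n)$. What you actually need is Lemma~\ref{minor}: a constant $K>0$ with $\|v\|_{u_n}\ge K\|v\|_{\mathbb{H}^2}$, which yields $L_{\mathbb{H}^2}(c_n)\le K^{-1}\mathcal{L}_{u_n}(c_n)\le K^{-1}d_{\mathbb{H}^2}(x,y)$ and hence both the Arzel\`a--Ascoli compactness and the confinement of the $c_n$ to a fixed compact set. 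This is exactly why the paper names Lemma~\ref{minor} alongside Lemma~\ref{major}. Note also that you need $K$ uniform in $n$; Lemma~\ref{minor} as stated gives $K>0$ only under a Fuchsian hypothesis, so in the generality of the proposition one argues locally: on each compact subset of $\mathbb{D}$ the equi-Lipschitz convex functions $\bar u_n$ have uniformly bounded gradients, so the support planes of $S_{u_n}$ are uniformly bounded away from light-like, giving a common $K>0$ there. Once this is in place, your semicontinuity worry dissolves as well, since on the fixed compact set the integrands in \eqref{lengthstructure} are uniformly bounded and Lemma~\ref{deriva} plus dominated convergence (as in Proposition~\ref{convstruc}) handle the limit.
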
 
  \begin{proof}
  The proof of this proposition is similar as the one done in \cite[Proposition 3.12]{FFDS}. The proof was done using proposition \ref{convstruc}, the only difference is to use   Lemma \ref{major} and \ref{minor} instead of  \cite[corollary 3.11]{FFDS}.
  
  \end{proof}


We recall that in this paper  we are using approximation by smooth surfaces. 
We note also that by Lemma \ref{minor} and Lemma \ref{major},  $d_{u_n}$ are complete distances on $\mathbb{H}^2$, also   we have    $\mathcal{L}_{u_n}=L_{d_{u_n}}$ (because of smoothness, see \cite{Bur15} for more details),  
 we deduce the following



 \begin{lemma}\label{3.18}  
Let $(u_n,\Gamma_n)$ be  Fuchsian C-convex functions such that:
\begin{itemize}
 \item $(u_n,\Gamma_n)_n$ converges to a pair $(u,\Gamma)$,
\item There exist $0<R<\pi/2$ with $0\leq u_n<R$,
 \item $d_{u_n}$ are distances with Lipschitz shortest paths,
 \item $d_{u_n}$ converge to $d_u$, uniformly on compact sets.
 \end{itemize}
 Then on any compact set of $\mathbb{H}^2$, $d_{u_n}(\tilde{\phi}_n(.),\tilde{\phi}_n(.))$ uniformly converge to $d_u$, where $\tilde{\phi}_n$ is given by Lemma \ref{tildephi}.
 \end{lemma}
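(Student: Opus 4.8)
### Proof proposal for Lemma 3.18

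The plan is to combine the convergence $d_{u_n}\to d_u$ (hypothesis) with the convergence $\tilde\phi_n\to\mathrm{id}$ on compact sets (Lemma~\ref{tildephi}), using a triangle inequality together with a uniform (equi-)Lipschitz control on the distances $d_{u_n}$. Fix a compact set $A\subset\mathbb{H}^2$. First I would enlarge $A$ to a compact set $A'$ such that $\tilde\phi_n(A)\subset A'$ for all $n$ large: this is possible because $(\tilde\phi_n)_n$ converges to the identity uniformly on compact sets, so the images $\tilde\phi_n(A)$ stay in a bounded neighbourhood of $A$. All estimates below will take place inside $A'$.

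Next I would record the key uniform Lipschitz estimate: by Lemma~\ref{major}, for every $n$ and every $x,y$ one has $d_{u_n}(x,y)\leq d_{\mathbb{H}^2}(x,y)$, so the family $\{d_{u_n}\}$ is uniformly $1$-Lipschitz with respect to $d_{\mathbb{H}^2}\times d_{\mathbb{H}^2}$ (by the usual quadrilateral inequality $|d_{u_n}(x,y)-d_{u_n}(x',y')|\leq d_{u_n}(x,x')+d_{u_n}(y,y')\leq d_{\mathbb{H}^2}(x,x')+d_{\mathbb{H}^2}(y,y')$). The same bound holds for $d_u$. Then for $x,y\in A$ I would split
$$
|d_{u_n}(\tilde\phi_n(x),\tilde\phi_n(y))-d_u(x,y)|
\leq |d_{u_n}(\tilde\phi_n(x),\tilde\phi_n(y))-d_{u_n}(x,y)|
+|d_{u_n}(x,y)-d_u(x,y)|.
$$
The second term tends to $0$ uniformly on $A\times A$ by hypothesis. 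For the first term, the uniform Lipschitz estimate gives
$$
|d_{u_n}(\tilde\phi_n(x),\tilde\phi_n(y))-d_{u_n}(x,y)|
\leq d_{\mathbb{H}^2}(\tilde\phi_n(x),x)+d_{\mathbb{H}^2}(\tilde\phi_n(y),y),
$$
and the right-hand side tends to $0$ uniformly for $x,y\in A$ because $\tilde\phi_n\to\mathrm{id}$ uniformly on the compact set $A$. Combining the two estimates yields uniform convergence on $A\times A$, which is the claim.

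The only subtlety — and the point I would expect to need a word of care — is the uniform Lipschitz bound: one must be sure that Lemma~\ref{major} (hence the $1$-Lipschitz control with respect to $d_{\mathbb{H}^2}$) applies to all the $u_n$ uniformly, which it does, since the bound $d_{u}\leq d_{\mathbb{H}^2}$ is independent of the particular C-convex function. Note that it is precisely this upper Lipschitz bound, rather than the lower bound of Lemma~\ref{minor}, that does the work here; Lemma~\ref{minor} is what guarantees the $d_{u_n}$ are genuine distances, which is needed only so that the statement makes sense. I would therefore present the argument as the two-line triangle-inequality estimate above, citing Lemma~\ref{major} for equi-Lipschitzness and Lemma~\ref{tildephi} for $\tilde\phi_n\to\mathrm{id}$, exactly as the analogous step is handled in \cite{FFDS}.
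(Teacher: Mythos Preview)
Your proposal is correct and follows essentially the same approach as the paper: both split via the triangle inequality into the term $|d_{u_n}(\tilde\phi_n(x),\tilde\phi_n(y))-d_{u_n}(x,y)|$ (controlled by $d_{u_n}(\tilde\phi_n(x),x)+d_{u_n}(\tilde\phi_n(y),y)$, which tends to $0$ using Lemma~\ref{major} and $\tilde\phi_n\to\mathrm{id}$) and the term $|d_{u_n}(x,y)-d_u(x,y)|$ (handled by hypothesis). Your presentation is in fact a bit cleaner: the paper invokes both Lemmas~\ref{minor} and~\ref{major} to justify $d_{u_n}(\tilde\phi_n(x),x)\to 0$, whereas you correctly isolate Lemma~\ref{major} as the only ingredient actually needed for that step.
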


\begin{proof}

By Lemma~\ref{minor} and Lemma~\ref{major}, the topology induced by $d_u$ onto $\mathbb{H}^2$ is the topology for the hyperbolic metric. It follows that for the maps $\tilde{\phi}_n$ of Lemma~\ref{tildephi}, we have that on compact sets, the maps $x\mapsto d_{u_n}(\tilde{\phi}_n(x),x)$ uniformly converge to $0$.  
By the triangle inequality we have,

$$ d_{u_n}(\tilde{\phi}_n(x),\tilde{\phi}_n(y))-d_u(x,y)\leq d_{u_n}(\tilde{\phi}_n(x),x)+d_{u_n}(\tilde{\phi}_n(y),y)+d_{u_n}(x,y)-d_u(x,y)$$

   by the preceding arguments and proposition \ref{converg},  for $n$ sufficiently large  the right-hand side is uniformly less than any $\epsilon>0$. On the other  hand, by triangle inequality again we have,



 \begin{align*}
   d_u(x,y)-d_{u_n}(\tilde{\phi}_n(x),\tilde{\phi}_n(y))& = &d_u(x,y)-d_{u_n}(x,y)+d_{u_n}(x,y)-d_{u_n}(\tilde{\phi}_n(x),\tilde{\phi}_n(y))    \\
\         & \leq & d_u(x,y)-d_{u_n}(x,y)+ d_{u_n}(x,\tilde{\phi}_n(x))+d_{u_n}(y,\tilde{\phi}_n(y))\\
&&+~~~~d_{u_n}(\tilde{\phi}_n(x),\tilde{\phi}_n(y)) -d_{u_n}(\tilde{\phi}_n(x),\tilde{\phi}_n(y))
 \end{align*}

which  is uniformly less than any $\epsilon>0$ for $n$ sufficiently large (by the same arguments). 

\end{proof}

$ $ \\

By definition, if $(u,\Gamma)$ is a Fuchsian C-convex function, then $\Gamma$ acts by isometries on $d_u$. In turn, $d_u$ defines a distance on the  compact surface $\mathbb{H}^2/\Gamma$.

\begin{definition}\label{def bard}
For a Fuchsian C-convex function $(u,\Gamma)$, we denote by $\bar{d}_u$ the distance defined by $d_u$ on $\mathbb{H}^2/\Gamma$.
\end{definition}

The reason to introduce the maps $ \tilde{\phi}_n$ from Lemma~\ref{tildephi} is the following Corollary of Lemma~\ref{3.18}. Its proof is formally the same as the one of Proposition~3.19 in \cite{FFDS}. (The definition of uniform convergence of metric spaces is recalled in Definition~\ref{def unif conv}.)

\begin{proposition}\label{5.10}
 Let $(u_n,\Gamma_n)$ be  Fuchsian C-convex functions converging to a pair $(u,\Gamma)$. Up to extracting a subsequence, $(\mathbb{H}^2/\Gamma_n,\bar{d}_{u_n})_n$ uniformly converges to $(\mathbb{H}^2/\Gamma,\bar{d}_{u})$.
 \end{proposition}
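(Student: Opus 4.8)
\textbf{Proof plan for Proposition~\ref{5.10}.}
The plan is to deduce the uniform convergence of the quotient metric spaces from the convergence of the lifted distances established in Lemma~\ref{3.18}, using the homeomorphisms $\phi_n\colon\mathbb{H}^2/\Gamma\to\mathbb{H}^2/\Gamma_n$ from Lemma~\ref{tildephi} as the maps realizing the uniform convergence. First I would check that the hypotheses of Lemma~\ref{3.18} are satisfied along a suitable subsequence: since $(u_n,\Gamma_n)$ converges to $(u,\Gamma)$, uniform boundedness $0\le u_n<R$ holds (this is part of the definition of convergence of Fuchsian C-convex functions, together with the remark following Definition~\ref{3.9}), the $d_{u_n}$ are genuine distances with Lipschitz shortest paths by Lemma~\ref{minor} (using that the $u_n$ are Fuchsian) together with the smoothness remark preceding Lemma~\ref{3.18}, and $d_{u_n}\to d_u$ uniformly on compact sets by Proposition~\ref{converg}, after passing to a subsequence. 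So Lemma~\ref{3.18} applies and gives that on every compact subset of $\mathbb{H}^2$, the functions $(x,y)\mapsto d_{u_n}(\tilde{\phi}_n(x),\tilde{\phi}_n(y))$ converge uniformly to $d_u(x,y)$.

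Next I would pass to the quotient. Fix a compact fundamental domain $F\subset\mathbb{H}^2$ for $\Gamma$ (which exists since $\Gamma$ is cocompact), and let $F'$ be a slightly larger compact set, say the union of $F$ with its images under the finitely many $\sigma\in\Gamma$ needed so that any two points of $\mathbb{H}^2/\Gamma$ have representatives $x,y\in F$ with $d_u(x,y)=\bar{d}_u(\pi(x),\pi(y))$ realized, up to $\varepsilon$, by a path staying in the $\Gamma$-orbit of $F$; more concretely, since the $d_u$-diameter of $\mathbb{H}^2/\Gamma$ is finite, there is a compact $F'\supset F$ such that for all $x,y\in F$ one has $\bar d_u(\pi(x),\pi(y))=\inf_{\sigma\in\Gamma}d_u(x,\sigma y)$ with the infimum attained for some $\sigma$ with $\sigma y\in F'$. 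The equivariance $\tilde\phi_n\circ\sigma=\tau_n(\sigma)\circ\tilde\phi_n$ from Lemma~\ref{tildephi} then translates $\Gamma$-orbits to $\Gamma_n$-orbits, so $\bar d_{u_n}(\phi_n\pi(x),\phi_n\pi(y))=\inf_{\sigma\in\Gamma}d_{u_n}(\tilde\phi_n(x),\tilde\phi_n(\sigma y))$. Comparing this infimum with $\bar d_u(\pi(x),\pi(y))=\inf_{\sigma\in\Gamma}d_u(x,\sigma y)$ and using the uniform convergence from Lemma~\ref{3.18} on the compact set $F'$ (which bounds the relevant finitely many $\sigma y$), together with the fact that $\tilde\phi_n$ and $\tilde\phi_n\circ\sigma$ move points of $F$ by $d_{u_n}$-amounts tending to $0$ uniformly, I would conclude that $\sup_{x,y\in F}\big|\bar d_{u_n}(\phi_n\pi(x),\phi_n\pi(y))-\bar d_u(\pi(x),\pi(y))\big|\to 0$. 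This is exactly the statement that $\phi_n$ realizes the uniform convergence $(\mathbb{H}^2/\Gamma_n,\bar d_{u_n})\to(\mathbb{H}^2/\Gamma,\bar d_u)$ in the sense of Definition~\ref{def unif conv}.

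The main obstacle is the passage from distances on $\mathbb{H}^2$ to distances on the quotients, i.e.\ controlling the infimum over $\Gamma$ uniformly in $n$: a priori the minimizing $\sigma$ for $d_{u_n}$ could drift with $n$ and escape any fixed compact set. The key point that resolves this is that all the $d_{u_n}$ are uniformly comparable to $d_{\mathbb{H}^2}$ from above (Lemma~\ref{major}) and, because the $u_n$ are Fuchsian with a uniform bound $u_n<R$, uniformly comparable from below as well --- the constant $K$ in Lemma~\ref{minor} can be taken independent of $n$ by the same subsequence/compactness argument used in its proof (if not, one extracts a sequence of pointings contradicting Lemma~\ref{convex then spacelike} via Lemma~\ref{lem:subseq cv}). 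This uniform bi-Lipschitz control forces the minimizing translates $\sigma y$ to lie in a fixed compact set $F'$ for all large $n$, which is precisely what makes the uniform convergence on $F'$ applicable. Since, as the authors note, this is formally identical to the proof of Proposition~3.19 in \cite{FFDS}, once these uniformity points are in place the remaining estimates are routine triangle-inequality manipulations.
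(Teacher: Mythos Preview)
Your proposal is correct and follows essentially the same approach as the paper, which simply refers to Proposition~3.19 in \cite{FFDS}: apply Lemma~\ref{3.18} to get uniform convergence of $d_{u_n}(\tilde\phi_n(\cdot),\tilde\phi_n(\cdot))$ to $d_u$ on compact sets, then pass to the quotient via the formula $\bar d(\pi(x),\pi(y))=\inf_{\sigma}d(x,\sigma y)$ using the equivariance of $\tilde\phi_n$, with the key point being that the minimizing $\sigma$ stays in a fixed finite set thanks to the uniform two-sided comparison with $d_{\mathbb{H}^2}$. One small correction: the uniform bound $u_n<R<\pi/2$ is not literally part of the definition of convergence of Fuchsian C-convex functions, but it follows from it --- since $\Gamma_n\to\Gamma$ there is a compact set containing fundamental domains for all $\Gamma_n$ (for large $n$), on which $u_n\to u\le R_0<\pi/2$ uniformly, and $\Gamma_n$-invariance then propagates the bound to all of $\mathbb{H}^2$.
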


\subsection{Convergence of metrics implies convergence of groups}

The aim of this section is to prove Proposition~\ref{this propo}, 
that may be seen as a kind of converse of Proposition \ref{5.10}. 
The distance $\bar{d}_{u}$ was defined in Definition~\ref{def bard}.

 \begin{proposition}\label{this propo}
 Let $(S,d)$ be a metric of curvature $\leq -1$ and let $(u_n,\Gamma_n)$ be smooth Fuchsian C-convex functions, such that the sequence  $(\mathbb{H}^2/\Gamma_n,\bar{d}_{u_n})_n$ uniformly converges to $(S,d)$. Up to extracting a subsequence,
  \begin{itemize}
  \item $(\Gamma_n)_n$ converges to a Fuchsian group $\Gamma$;
 \item there exists $0<\beta<\pi/2 $ such that  $0\leq u_n<\beta$.
  \end{itemize}
  \end{proposition}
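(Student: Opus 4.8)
The strategy is to extract from the hypothesis (uniform convergence of $(\mathbb{H}^2/\Gamma_n,\bar d_{u_n})$ to $(S,d)$, a metric of curvature $\leq -1$) two uniform control statements: a \emph{diameter/systole} control, which will give subconvergence of the Fuchsian groups, and an \emph{upper bound on the $u_n$}, which comes from relating the AdS length structure to the hyperbolic one. I would model the argument on Proposition~3.20 (or its analogue) in \cite{FFDS}, using Lemma~\ref{major} and Lemma~\ref{minor} as the substitutes for the flat-case estimates.

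\textbf{Step 1: uniform geometry of the quotients.} Since $(\mathbb{H}^2/\Gamma_n,\bar d_{u_n})$ converges uniformly to the compact space $(S,d)$, the diameters $\operatorname{diam}(\mathbb{H}^2/\Gamma_n,\bar d_{u_n})$ are uniformly bounded, say by $D$, and so are the areas / total curvatures (here one uses that curvature $\leq -1$ passes to the limit and Gauss--Bonnet-type bounds for Alexandrov surfaces of bounded genus, cf.\ the discussion preceding Theorem~\ref{1.4}). By Lemma~\ref{major}, $d_{u_n}\leq d_{\mathbb{H}^2}$, hence the $\bar d_{u_n}$-diameter dominates the \emph{hyperbolic} diameter only after we also control it from below. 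For the lower control: if the systole (length of the shortest noncontractible geodesic) of $\mathbb{H}^2/\Gamma_n$ for the hyperbolic metric went to $0$, then, combining a collar lemma with the uniform upper bound $u_n< \pi/2$ and formula~\eqref{lengthstructure}, the $\bar d_{u_n}$-space would develop arbitrarily long thin parts or pinch, contradicting uniform convergence to the fixed compact space $(S,d)$; and if it went to $\infty$, the hyperbolic (hence, via Lemma~\ref{minor} once Step~2 is in place, the $\bar d_{u_n}$-) diameter would blow up, again a contradiction. So the hyperbolic systoles of the $\mathbb{H}^2/\Gamma_n$ stay in a fixed interval $[\varepsilon_0,C_0]$, and the genus is fixed ($=\operatorname{genus}(S)>1$); by Mumford compactness this forces, up to extracting a subsequence and conjugating, $(\Gamma_n)_n$ to converge to a Fuchsian group $\Gamma$ in the sense of Definition~\ref{3.9}.

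\textbf{Step 2: uniform upper bound on the $u_n$.} This is the step I expect to be the main obstacle, because in the AdS setting there is no ambient vector-space structure to give an elementary a~priori bound (the point stressed in the introduction). The idea: suppose after passing to a subsequence that $\sup_{\mathbb{H}^2} u_n\to \pi/2$. Pick $x_n\in\mathbb{H}^2$ realizing (up to the $\Gamma_n$-action, so we may take $x_n$ in a fundamental domain) a value of $u_n$ close to $\pi/2$. Then the graph point $(x_n,u_n(x_n))$ sits near the "top" of the affine AdS cylinder, where by Lemma~\ref{relation distances} $\bar u_n$ is forced to be very negative near $\bar x_n$ while vanishing on $\partial\mathbb{D}$; convexity of $\bar u_n$ then forces a steep slope, and via \eqref{lengthstructure} (the $\cos^2(u_n\circ c)$ factor degenerates to $0$) one shows that $\bar d_{u_n}$ contracts distances near $x_n$ so strongly that a whole $d_{\mathbb{H}^2}$-ball of definite radius around $x_n$ has $\bar d_{u_n}$-diameter $\to 0$, or else that some noncontractible loop through $x_n$ acquires $\bar d_{u_n}$-length $\to 0$. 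Either way the limit space $(S,d)$ would fail to be a genuine length metric of curvature $\leq -1$ (it would have a point of vanishing local diameter, or a pinched handle), contradicting the hypothesis. Hence $\sup u_n<\beta$ for some $\beta<\pi/2$ uniformly.

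\textbf{Step 3: conclusion.} Combining Steps~1 and~2 gives the two bullet points. I would remark that once the uniform bound $u_n<\beta$ is known, Lemma~\ref{lem:subseq cv} also lets one extract a limiting C-convex function $u$ with $u\circ\sigma=u$ for $\sigma\in\Gamma$, so that $(u,\Gamma)$ is a Fuchsian C-convex function; this is not strictly required by the statement but is what makes the proposition usable in Section~\ref{sec proof}. The delicate point throughout is keeping the two estimates from being circular: Step~1's lower diameter bound uses Lemma~\ref{minor}, whose constant $K>0$ is only available for a \emph{fixed} Fuchsian $u$, not uniformly along the sequence; so I would instead run Step~2 first to get the uniform bound $u_n<\beta$, then invoke the uniform version of Lemma~\ref{minor} (the compactness argument in its proof applies verbatim with $\beta$ in place of the $u$-dependent bound) to get a uniform $K>0$, and only then close Step~1.
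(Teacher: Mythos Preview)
Your proposal has a genuine gap, and it stems from the circularity you yourself flag at the end. You try to resolve it by running Step~2 first, but Step~2 as written cannot be made rigorous without Step~1 already in hand. Concretely: if $\sup u_n = u_n(x_n)\to\pi/2$, you want to say $\bar u_n(\bar x_n)$ is very negative and then use convexity. But by Lemma~\ref{relation distances}, $\bar u_n(\bar x_n)=-\tan(u_n(x_n))\sqrt{1-\|\bar x_n\|^2}$, and without control on the $\Gamma_n$ you cannot prevent $\bar x_n\to\partial\mathbb{D}$, in which case $\bar u_n(\bar x_n)$ need not blow up at all. Even if it did, convexity alone does not force $u_n$ to be close to $\pi/2$ on a \emph{definite} $d_{\mathbb{H}^2}$-ball; the graph could be a sharp spike. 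So neither the ``whole ball of $\bar d_{u_n}$-diameter $\to 0$'' nor the ``short noncontractible loop'' alternative is actually established.

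The point you are missing is that the circularity is illusory: group convergence needs only the \emph{easy} inequality, Lemma~\ref{major}, not Lemma~\ref{minor}. Uniform convergence of $(\mathbb{H}^2/\Gamma_n,\bar d_{u_n})$ to the genuine compact metric space $(S,d)$ gives a uniform lower bound $G>0$ on the $d_{u_n}$-systole (this is Lemma~\ref{geq}, a direct consequence of the definition of uniform convergence). Then Lemma~\ref{major} reads $G\leq d_{u_n}(x,\sigma_n x)\leq d_{\mathbb{H}^2}(x,\sigma_n x)$, so the \emph{hyperbolic} translation lengths are uniformly bounded below, and Mumford compactness applies immediately---no collar lemma, no lower bi-Lipschitz control, no appeal to Lemma~\ref{minor}. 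With group convergence in hand, there is a fixed compact $C\subset\mathbb{D}$ containing a fundamental domain for every $\Gamma_n$; the easy observation (Lemma~\ref{not all the surface}) that if $u_n\geq M_k\to\pi/2$ everywhere then $\bar d_{u_n}\leq(\cos M_k)\, d_{\mathbb{H}^2}\to 0$ gives a point $\bar x_n\in C$ where $\bar u_n$ is bounded; and convexity of $\bar u_n$ together with $\bar u_n|_{\partial\mathbb{D}}=0$ then bounds $\bar u_n$ uniformly on all of $C$, hence $u_n<\beta$ everywhere. The order is therefore \emph{groups first, then heights}, and Lemma~\ref{minor} plays no role in either step.
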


Under the hypothesis of Proposition \ref{this propo}, let's first   prove the convergence of groups. We first have a consequence of simple hyperbolic geometry, see \cite[Corollary 4.2]{FFDS}.

\begin{lemma} \label{geq}
There exists $G>0$ and $N>0$ such that for any $n>N$, for any $x\in\mathbb{H}^2$,  for every element $\sigma_n\in\Gamma_n\setminus\{0\}$
$$d_{u_n}(x,\sigma_n(x))\geq G.$$ 
\end{lemma}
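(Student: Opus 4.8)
The statement to be proved is Lemma~\ref{geq}: a uniform lower bound $G>0$ on the displacement $d_{u_n}(x,\sigma_n(x))$ for all nontrivial $\sigma_n\in\Gamma_n$, valid for $n$ large. The first move is to replace the intrinsic AdS distances $d_{u_n}$ by the hyperbolic distance $d_{\mathbb{H}^2}$, using the comparison inequality. By Lemma~\ref{minor} we have $d_{u_n}(x,y)\geq K_n\, d_{\mathbb{H}^2}(x,y)$, so it would be enough to bound $K_n$ away from $0$ \emph{uniformly in $n$} and to bound the hyperbolic displacement $d_{\mathbb{H}^2}(x,\sigma_n(x))$ away from $0$ uniformly. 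The latter is pure hyperbolic geometry: for a cocompact torsion-free Fuchsian group, the displacement of any nontrivial element is at least the length of the shortest closed geodesic, i.e. twice the injectivity radius of $\mathbb{H}^2/\Gamma_n$, which by $2\pi$-thinness / the collar lemma is controlled below in terms of the area (here fixed by genus) and the diameter of the quotient surface. So the plan is: (i) show the surfaces $(\mathbb{H}^2/\Gamma_n,\bar d_{u_n})$, being uniformly convergent to $(S,d)$, have uniformly bounded diameter, hence the hyperbolic quotients $\mathbb{H}^2/\Gamma_n$ have uniformly bounded diameter (using $d_{u_n}\leq d_{\mathbb{H}^2}$ from Lemma~\ref{major} in one direction, and a uniform lower bound on $d_{u_n}$ in terms of $d_{\mathbb{H}^2}$ for the other); (ii) deduce a uniform positive lower bound on the systole of $\mathbb{H}^2/\Gamma_n$; (iii) combine with a uniform lower bound on $K_n$.

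\textbf{Carrying it out.} For step (i): uniform convergence of metric spaces implies the diameters converge, so $\operatorname{diam}(\mathbb{H}^2/\Gamma_n,\bar d_{u_n})$ is bounded, say by $D$. By Lemma~\ref{major}, $\bar d_{u_n}\leq \bar d_{\mathbb{H}^2}$, which unfortunately goes the wrong way; instead one uses that a bounded-diameter $\mathrm{CAT}$-type space of fixed area cannot be hyperbolic-thin everywhere, or more directly: a closed hyperbolic surface of genus $g$ with a short geodesic of length $\ell$ contains an embedded collar of width $\sim\log(1/\ell)$, hence has diameter $\gtrsim\log(1/\ell)$; but its $\bar d_{u_n}$-diameter is then also large because on the collar, away from degenerating regions, $K_n$ is not too small — here one must be a little careful and this is essentially where \cite[Corollary 4.2]{FFDS} does the bookkeeping. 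An alternative and cleaner route, the one I would actually pursue, is to argue by contradiction directly: if no such $G$ exists, pass to a subsequence with $\sigma_n\in\Gamma_n$ and $x_n$ with $d_{u_n}(x_n,\sigma_n(x_n))\to 0$; translating by isometries of $\mathbb{H}^2$ one may assume $x_n=x_0$ fixed; then $d_{u_n}(x_0,\sigma_n(x_0))\to 0$, and since $d_{u_n}\geq K d_{\mathbb{H}^2}$ for the \emph{limit} function (after extracting so that $u_n$ converges, which is legitimate once one knows the $u_n$ are uniformly bounded — but that is exactly the other conclusion of Proposition~\ref{this propo}, so the two must be proved together, as in \cite{FFDS}), one concludes $d_{\mathbb{H}^2}(x_0,\sigma_n(x_0))\to 0$, forcing $\sigma_n(x_0)\to x_0$; discreteness plus freeness of the Fuchsian action then forces $\sigma_n$ to be trivial for large $n$, a contradiction. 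The point is that the displacement at a single point controls the element because the action is properly discontinuous and free.

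\textbf{Main obstacle.} The genuine difficulty is that the lower comparison constant $K_n=\inf(\|v\|_{u_n}/\|v\|_{\mathbb{H}^2})$ is only known to be positive for a \emph{fixed} Fuchsian $u_n$ (Lemma~\ref{minor}), with the positivity proved by a compactness argument that degenerates if the $u_n$ themselves are allowed to run off to $\pi/2$. So one cannot cite Lemma~\ref{minor} with a uniform constant without first controlling $\sup u_n$, and controlling $\sup u_n$ is precisely the second bullet of Proposition~\ref{this propo}, whose proof in turn uses a displacement estimate of the type of Lemma~\ref{geq}. The resolution, following \cite{FFDS}, is to prove Lemma~\ref{geq} using only hyperbolic geometry of $\mathbb{H}^2/\Gamma_n$ together with the one-sided bound $d_{u_n}\leq d_{\mathbb{H}^2}$ and the hypothesis that $\bar d_{u_n}\to d$ (which gives a lower bound on $\bar d_{u_n}$-distances between distinct points of $S$, hence keeps the $\Gamma_n$-orbits from collapsing in the $d_{u_n}$-metric, hence — this is the subtle step — keeps closed geodesics of $\mathbb{H}^2/\Gamma_n$ from becoming too short, because a very short hyperbolic geodesic would, via the thin-part structure, create points at small $\bar d_{u_n}$-distance that are nonetheless ``far'' in the limit $(S,d)$, contradicting uniform convergence). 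I expect the heart of the argument, and the step most deserving of care, to be this translation of ``no short closed geodesic on $\mathbb{H}^2/\Gamma_n$'' into a statement purely about $\bar d_{u_n}$ that survives the limit, exactly as in \cite[Corollary 4.2]{FFDS}.
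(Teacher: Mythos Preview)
Your diagnosis of the circularity hazard is correct: one cannot invoke Lemma~\ref{minor} with a uniform constant before the $u_n$ are known to be uniformly bounded, and that bound (Proposition~\ref{majorfunc}) is established only \emph{after} Lemma~\ref{geq}. But you then overcomplicate the resolution and slightly reverse the logic. Lemma~\ref{geq} is a statement about the $d_{u_n}$-displacement, not the hyperbolic displacement; it is only in the \emph{next} proposition that the paper combines it with $d_{u_n}\le d_{\mathbb{H}^2}$ (Lemma~\ref{major}) to deduce $G\le L_{\sigma_n}$ and then invoke Mumford. So the collar lemma, thin parts, and the hyperbolic structure of $\mathbb{H}^2/\Gamma_n$ are all beside the point here.

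The paper does not spell out a proof (it cites \cite[Corollary~4.2]{FFDS}), but the argument is short and purely metric. If the conclusion fails, along a subsequence there are $x_n$ and nontrivial $\sigma_n\in\Gamma_n$ with $d_{u_n}(x_n,\sigma_n(x_n))\to 0$. A shortest $d_{u_n}$-path from $x_n$ to $\sigma_n(x_n)$ projects to a homotopically nontrivial loop $\gamma_n$ in $(\mathbb{H}^2/\Gamma_n,\bar d_{u_n})$ of length $\ell_n\to 0$, hence of $\bar d_{u_n}$-diameter at most $\ell_n/2$. Pulling back by the homeomorphisms $f_n$ witnessing uniform convergence, $f_n^{-1}(\gamma_n)$ is a noncontractible loop in $S$ whose $d$-diameter is at most $\ell_n/2+\sup|f_n^*\bar d_{u_n}-d|\to 0$. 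For large $n$ it therefore lies in an arbitrarily small $d$-ball of $(S,d)$; since $(S,d)$ has curvature $\le -1$, such balls are CAT$(-1)$ and contractible, contradicting the nontriviality of the loop. That is the entire content: the lower bound $G$ is essentially the radius below which balls in the limit space are contractible, and the only input beyond uniform convergence is the local topology of $(S,d)$. Your sketch circles this idea (``points at small $\bar d_{u_n}$-distance\ldots contradicting uniform convergence'') without landing it cleanly.
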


\begin{proposition} 
Under the hypothesis of Proposition \ref{this propo}, up to extracting a subsequence, the sequence $(\Gamma_n)_n$ converges to a Fuchsian group $\Gamma$.
\end{proposition}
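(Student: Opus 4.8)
The plan follows the corresponding step of \cite{FFDS}: I would show that the hypotheses confine the underlying hyperbolic surfaces $\mathbb{H}^2/\Gamma_n$ to a compact family, and then extract a limit from it. First the bookkeeping. Since $(\mathbb{H}^2/\Gamma_n,\bar d_{u_n})$ converges uniformly to the closed surface $(S,d)$ of genus $>1$, for $n$ large $\mathbb{H}^2/\Gamma_n$ is homeomorphic to $S$ (part of the uniform-convergence data, or clear from the construction of the $(u_n,\Gamma_n)$), and the homeomorphisms (or almost-isometries) witnessing the convergence provide a coherent marking $\pi_1(S)\cong\Gamma_n$, hence the isomorphisms between the $\Gamma_n$ required by Definition~\ref{3.9}.

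The one quantitative input is Lemma~\ref{geq}: there are $G>0$ and $N$ with $d_{u_n}(x,\sigma_n(x))\ge G$ for all $n>N$, all $x\in\mathbb{H}^2$ and all $\sigma_n\in\Gamma_n\setminus\{1\}$. By Lemma~\ref{major} one has $d_{u_n}\le d_{\mathbb{H}^2}$, so also $d_{\mathbb{H}^2}(x,\sigma_n(x))\ge G$; hence every nontrivial element of $\Gamma_n$ has hyperbolic translation length $\ge G$, i.e. the closed hyperbolic surface $\mathbb{H}^2/\Gamma_n$ has systole $\ge G$. Therefore all of the $\mathbb{H}^2/\Gamma_n$ lie in the $G$-thick part of the moduli space of closed hyperbolic surfaces homeomorphic to $S$, which is compact by Mumford's compactness theorem (see e.g.\ \cite{buser}).

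By this compactness, after passing to a subsequence and replacing each $(u_n,\Gamma_n)$ by a conjugate under an isometry $\alpha_n$ of $\mathbb{H}^2$ --- equivalently, by applying the ambient AdS isometry extending $\alpha_n$, which preserves the foliation, the time-orientation, hence C-convexity, and leaves the isometry type of $(\mathbb{H}^2/\Gamma_n,\bar d_{u_n})$ unchanged --- the marked hyperbolic structures converge in Teichm\"uller space. Call $\Gamma$ the limiting group: being a limit of torsion-free, cocompact, orientation-preserving Fuchsian groups all of systole $\ge G$, it is again discrete, torsion-free, cocompact and orientation-preserving, i.e.\ Fuchsian. Convergence in Teichm\"uller space says precisely that the induced isomorphisms $\tau_n\colon\Gamma\to\Gamma_n$ satisfy $\tau_n(\sigma)\to\sigma$ for every $\sigma\in\Gamma$, which is Definition~\ref{3.9}, so $(\Gamma_n)_n\to\Gamma$.

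The delicate point, and the one I would be most careful about, is to be sure the systole estimate is genuinely uniform in $n$ and in particular does not tacitly rely on the uniform bound $0\le u_n<\beta$ --- that bound is the other conclusion of Proposition~\ref{this propo} and is proved independently. It is exactly the virtue of passing through the \emph{hyperbolic} systole that a possible degeneration $u_n\to\pi/2$, which would shrink $d_{u_n}$ relative to $d_{\mathbb{H}^2}$, still cannot shrink the hyperbolic systole, so no circularity arises. A lesser bookkeeping issue is matching the homeomorphisms from the uniform convergence with the conjugations $\alpha_n$, so that one obtains convergence in Teichm\"uller space and not merely in moduli space.
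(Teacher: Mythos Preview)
Your argument is correct and follows essentially the same route as the paper: combine Lemma~\ref{geq} with Lemma~\ref{major} to obtain a uniform lower bound on the hyperbolic translation lengths of nontrivial elements of $\Gamma_n$, and then invoke Mumford compactness to extract a convergent subsequence. The paper's own proof is terser and does not spell out the marking and conjugation bookkeeping you raise; your additional care on those points (and your observation that no circularity with the bound $u_n<\beta$ arises) is well placed, but the underlying strategy is the same.
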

\begin{proof}
First  by Lemma \ref{major} we have that for all $x,y\in\mathbb{H}^2$,  $$d_{u_n}(x,y)\leq d_{\mathbb{H}^2}(x,y),$$ and by Lemma \ref{geq}, we have that  there exists $G>0$ and $N>0$ such that for any $n>N$ and for any $x\in\mathbb{H}^2$:
$$G\leq d_{u_n}(x,\sigma_n(x))\leq d_{\mathbb{H}^2}(x,\sigma_n(x)), $$ 
in particular  if $$L_{\sigma_n}=\min_{x\in\mathbb{H}^2}d_{\mathbb{H}^2}(x,\sigma_n(x)),$$  we have $$G\leq L_{\sigma_n}.$$ 

The length is uniformly bounded from below, hence by a classical result of Mumford \cite{Mumf} we can deduce that up to extracting a subsequence, the sequence of groups converges. 
\end{proof}

 \begin{lemma}\label{not all the surface} 
 Under  the assumptions of Proposition~\ref{this propo}, there exists $M<\pi/2$ such that
 for all $n$, there is $x_n\in \mathbb{H}^2$ such that $u_n(x_n) < M$.
 \end{lemma}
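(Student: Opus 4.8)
The plan is to argue by contradiction using the hypothesis that $(\mathbb{H}^2/\Gamma_n,\bar{d}_{u_n})_n$ uniformly converges to $(S,d)$, which forces a uniform upper bound on diameters. Suppose no such $M<\pi/2$ exists; then, after passing to a subsequence, there is a sequence of radii $R_n\to\pi/2$ such that $\min_{\mathbb{H}^2}u_n = \sup_{\mathbb{H}^2}u_n$ is forced up — more precisely, since $u_n$ is $\Gamma_n$-invariant and $\mathbb{H}^2/\Gamma_n$ is compact, $u_n$ attains its minimum, and the assumption $\neg(\exists M)$ means $\min u_n\to\pi/2$. So for every $\varepsilon>0$ there is $N$ with $u_n\geq \pi/2-\varepsilon$ everywhere for $n\geq N$.

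Next I would use this lower bound on $u_n$ to produce long curves, contradicting the diameter bound coming from uniform convergence. From the length formula \eqref{lengthstructure}, $\mathcal{L}_{u_n}(c)^2 = \int \cos^2(u_n\circ c)\,\|c'\|_{\mathbb{H}^2}^2 - (u_n\circ c)'^2\,dt$, the spatial coefficient $\cos^2(u_n\circ c)$ is uniformly small; but one must control the term $(u_n\circ c)'$ as well, so the naive estimate does not immediately give a lower bound on $d_{u_n}$. Instead I would exploit the already-established lower bound $d_{u_n}\geq K d_{\mathbb{H}^2}$ in the wrong direction and rather use Lemma~\ref{minor}'s mechanism: a lower bound $u_n\geq \pi/2-\varepsilon$ means, by Lemma~\ref{relation distances}, that $|\bar{u}_n|$ is close to $\tan(\pi/2-\varepsilon)\sqrt{1-\|\bar x\|^2}$, i.e.\ $S_{\bar u_n}$ is very "deep" in the affine cylinder. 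The cleanest route is to bound diameters: since uniform convergence to $(S,d)$ gives $\sup_n \mathrm{diam}(\mathbb{H}^2/\Gamma_n,\bar d_{u_n}) =: D < \infty$, and the systole of $\Gamma_n$ is bounded below (Lemma~\ref{geq}, or the Mumford argument already invoked) while bounded above by $2D$, the groups $\Gamma_n$ stay in a compact part of moduli space; in particular the hyperbolic area of $\mathbb{H}^2/\Gamma_n$ is fixed ($=2\pi|\chi(S)|$) and a fundamental domain has hyperbolic diameter bounded above by some $D_0$ independent of $n$.

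Then, fixing a point $\bar x_n$ realizing $\min u_n$ and a point $\bar y_n$ at hyperbolic distance, say, $D_0$ from it inside $\mathbb{H}^2$, I would estimate $d_{u_n}(x_n,y_n)$ from below. Here is where I expect the main obstacle: controlling the derivative term $(u_n\circ c)'$ along a competitor geodesic. The remedy is that a shortest path $c$ for $d_{u_n}$ has $\mathcal{L}_{u_n}(c)^2\leq \cos^2(\text{something})\,L_{\mathbb{H}^2}(c)^2$ only pointwise in the spatial part, so instead one should use that $u_n$ ranges in $[\pi/2-\varepsilon,\,\pi/2)$, hence $\cos^2(u_n)\le \varepsilon^2$ roughly, giving $\mathcal{L}_{u_n}(c)^2 \le \varepsilon^2\|c'\|_{\mathbb{H}^2}^2$ whenever $c$ is reparametrized so that $(u_n\circ c)'=0$ — but that is not generally possible. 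The correct and simpler observation: along \emph{any} Lipschitz curve the integrand is $\le \cos^2(u_n\circ c)\|c'\|^2_{\mathbb{H}^2}$, so $\mathcal{L}_{u_n}(c)\le \varepsilon\, L_{\mathbb{H}^2}(c)$, hence $d_{u_n}(x,y)\le \varepsilon\, d_{\mathbb{H}^2}(x,y)$ for all $x,y$ once $u_n\geq \pi/2-\varepsilon$ wait — this makes distances \emph{small}, not large. So the contradiction is with the \emph{lower} side: $(S,d)$ being a fixed metric of curvature $\le -1$ is non-degenerate, so $\mathrm{diam}(S,d)>0$ and more importantly $d$ is a genuine distance; but $\bar d_{u_n}\le \varepsilon\, \bar d_{\mathbb{H}^2,\Gamma_n}\le \varepsilon\, D_0$, so $\mathrm{diam}(\mathbb{H}^2/\Gamma_n,\bar d_{u_n})\le \varepsilon D_0\to 0$, contradicting uniform convergence to $(S,d)$ which has positive diameter. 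Thus some $M<\pi/2$ must work. I would write this up by: (i) recording the uniform upper bound $D_0$ on hyperbolic diameters of fundamental domains from the Mumford-type compactness already proved; (ii) deriving $d_{u_n}(x,y)\le \cos(\pi/2-\varepsilon)\, d_{\mathbb{H}^2}(x,y) \le \varepsilon\, d_{\mathbb{H}^2}(x,y)$ on $\mathbb{H}^2$ under the contradiction hypothesis $u_n\ge \pi/2-\varepsilon$; (iii) concluding $\mathrm{diam}(\mathbb{H}^2/\Gamma_n,\bar d_{u_n})\to 0$; (iv) observing this contradicts $\mathrm{diam}(S,d)>0$, which holds since $(S,d)$ is a metric space on a closed surface of genus $>1$.
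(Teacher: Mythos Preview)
Your final outline (i)--(iv) is correct and is essentially the paper's own argument: assume by contradiction that (along a subsequence) $u_n\ge M_k\to\pi/2$, use the length formula to get $d_{u_n}\le\cos(M_k)\,d_{\mathbb H^2}$, and conclude that the quotient diameters collapse to $0$, contradicting uniform convergence to the non-degenerate space $(S,d)$. Your step~(i), bounding the hyperbolic diameters of $\mathbb H^2/\Gamma_n$ via the already-established convergence of groups, makes explicit a point the paper leaves implicit; it is indeed needed so that $\cos(M_k)\cdot\mathrm{diam}_{\mathbb H^2}(\mathbb H^2/\Gamma_n)\to 0$. The meandering before you reach the outline (the detour through Lemma~\ref{minor}, the momentary confusion about which direction the inequality goes) should be trimmed in a clean write-up, but the mathematics is sound.
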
 
\begin{proof}
Suppose that the result is false: for a sequence $M_k\to \pi/2$, there is $n_k$ such that $u_{n_k} \geq M_k$. By the definition of the length structure \eqref{lengthstructure}, it follows that 
$d_{u_{n_k}}\leq \cos M_k d_{\mathbb{H}^2}$. In turn, $(\mathbb{H}^2/\Gamma_n,\bar{d}_{u_n})_n$ has a subsequence converging to $0$, that is a contradiction.
\end{proof}

\begin{proposition}\label{majorfunc}
Under the hypothesis of Proposition~\ref{this propo}, there exists $0<\beta<\pi/2$ such that,  for any $n\in\mathbb{N}$, for any $x\in\mathbb{H}^2$,
$$u_n(x)<\beta.$$ 
\end{proposition}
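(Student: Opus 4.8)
\textbf{Proof plan for Proposition~\ref{majorfunc}.}
The plan is to argue by contradiction, in the spirit of Lemma~\ref{not all the surface} but now controlling the function \emph{everywhere}, not just at one point. Suppose the conclusion fails: then there is a sequence $M_k\to\pi/2$, indices $n_k$, and points $y_k\in\mathbb{H}^2$ with $u_{n_k}(y_k)\geq M_k$. By Lemma~\ref{not all the surface} there is a fixed $M<\pi/2$ and points $x_{n_k}\in\mathbb{H}^2$ with $u_{n_k}(x_{n_k})<M$. Using the $\Gamma_{n_k}$-invariance of $u_{n_k}$, we may move $x_{n_k}$ into a fixed compact fundamental domain; and after acting by $\Gamma_{n_k}$ again we may assume $y_k$ is the closest translate of the ``large value'' point to that fundamental domain, so that the hyperbolic distance $d_{\mathbb{H}^2}(x_{n_k},y_k)$ is bounded by the diameter of a fundamental domain plus $L_{\sigma}$-type quantities. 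Since the groups $\Gamma_{n_k}$ converge (by the previous Proposition), their fundamental domains have uniformly bounded diameter, so $d_{\mathbb{H}^2}(x_{n_k},y_k)\leq D$ for a fixed $D$.

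Next I would extract a uniform lower bound on $d_{u_{n_k}}$ in terms of $|u_{n_k}(y_k)-u_{n_k}(x_{n_k})|$ along any path. The key geometric input is formula~\eqref{lengthstructure}: for a Lipschitz curve $c$ from $x$ to $y$,
$$\mathcal{L}_{u_n}(c)=\int_0^1\sqrt{\cos^2(u_n\circ c)\,\|c'\|_{\mathbb{H}^2}^2-(u_n\circ c)'^2}\,dt\geq \int_0^1\sqrt{-(u_n\circ c)'^2}\,dt$$
is \emph{not} directly useful since the integrand under the square root is then negative; instead the point is that for the surface to be space-like (Lemma~\ref{convex then spacelike}) the integrand is nonnegative, which forces $|(u_n\circ c)'|\leq \|c'\|_{\mathbb{H}^2}$ pointwise, i.e. $u_n$ is $1$-Lipschitz for $d_{\mathbb{H}^2}$. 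Hence
$$M_k\leq u_{n_k}(y_k)\leq u_{n_k}(x_{n_k})+d_{\mathbb{H}^2}(x_{n_k},y_k)< M+D,$$
which is absurd once $M_k> M+D$. So the hard analytic work reduces to this Lipschitz estimate plus the diameter bound on fundamental domains.

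Alternatively, if one prefers to work intrinsically on the quotient: the uniform convergence $(\mathbb{H}^2/\Gamma_{n_k},\bar d_{u_{n_k}})\to(S,d)$ gives a uniform bound $\Delta$ on the diameters $\mathrm{diam}(\mathbb{H}^2/\Gamma_{n_k},\bar d_{u_{n_k}})$. Combining $\bar d_{u_{n_k}}\leq \Delta$ on the quotient with the pointwise Lipschitz bound $|u_{n_k}(y_k)-u_{n_k}(x_{n_k})|\leq d_{\mathbb{H}^2}(x_{n_k},y_k)$ and Lemma~\ref{not all the surface} again yields the same contradiction, provided one controls $d_{\mathbb{H}^2}$ by $\bar d_{u_{n_k}}$ on a fundamental domain via the compactness of the limit group and Lemma~\ref{minor}.

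\textbf{Main obstacle.} The delicate point is \emph{uniformity}: both the Lipschitz constant for $u_n$ (which here is the clean constant $1$, coming from space-likeness) and, more seriously, the bound $D$ on $d_{\mathbb{H}^2}(x_{n_k},y_k)$ must be independent of $k$. This is exactly where the convergence $(\Gamma_n)_n\to\Gamma$ established just above is used: it guarantees fundamental domains of uniformly bounded hyperbolic diameter, so that any point of $\mathbb{H}^2$ is within a fixed hyperbolic distance of a $\Gamma_n$-translate of the fixed base point $x_{n_k}$ supplied by Lemma~\ref{not all the surface}. Without that uniform geometric control the argument collapses, so the real content is assembling the previously proved compactness statements correctly rather than any new estimate.
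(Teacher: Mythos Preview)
Your overall strategy is sound and is \emph{different} from the paper's: the paper works in the affine chart and uses that the convex functions $\bar u_n$ vanish on $\partial\mathbb D$, so on any compact $C\subset\mathbb D$ containing a common fundamental domain the oscillation of $\bar u_n$ is uniformly controlled by its value at the single point $\bar x_n$ furnished by Lemma~\ref{not all the surface}. Your route instead stays on $\mathbb H^2$ and exploits space-likeness to get a Lipschitz bound on $u_n$; both arguments rely on the convergence of the $\Gamma_n$ to produce a compact set containing all fundamental domains.

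There is, however, a genuine gap in your final inequality. From the nonnegativity of the integrand in \eqref{lengthstructure} you correctly extract
\[
|(u_n\circ c)'|\le \cos(u_n\circ c)\,\|c'\|_{\mathbb H^2},
\]
but you then discard the $\cos$ factor and use only $|(u_n\circ c)'|\le \|c'\|_{\mathbb H^2}$. This yields $u_{n_k}(y_k)<M+D$, and you claim a contradiction ``once $M_k>M+D$''. But $M_k<\pi/2$ always, so if $M+D\ge \pi/2$ (which nothing prevents: $D$ is a hyperbolic diameter and can be large) the inequality $M_k<M+D$ is vacuous and no contradiction arises.

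The fix is simply to keep the $\cos$ factor. Parametrizing $c$ by hyperbolic arclength and writing $f=u_n\circ c$, the inequality $|f'|\le\cos f$ says that $G\circ f$ is $1$-Lipschitz, where $G(t)=\int_0^t \sec s\,ds=\ln(\sec t+\tan t)$ is a diffeomorphism $[0,\pi/2)\to[0,\infty)$. Hence
\[
u_n(y)\le G^{-1}\bigl(G(u_n(x))+d_{\mathbb H^2}(x,y)\bigr),
\]
and with $u_n(x_{n_k})<M$ and $d_{\mathbb H^2}(x_{n_k},y_k)\le D$ you get $u_{n_k}(y_k)<G^{-1}(G(M)+D)=:\beta$, which is automatically $<\pi/2$ because $G^{-1}$ maps $[0,\infty)$ into $[0,\pi/2)$. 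With this correction your argument goes through, and is a nice intrinsic alternative to the paper's convexity argument in the affine model. Your ``alternative'' paragraph via Lemma~\ref{minor} is less convincing: that lemma gives $K>0$ for a single Fuchsian $u$, not a uniform $K$ for the whole sequence $(u_n)$, so it does not directly bound $d_{\mathbb H^2}$ in terms of $\bar d_{u_n}$.
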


\begin{proof}
Let us consider the affine model of anti-de Sitter space. 
As the sequence of groups converges, there exists a compact set $C\subset \mathbb{D}$, which contains a fundamental domain for $\Gamma_n$ for all $n$. Hence the points $x_n$ given by  Lemma~\ref{not all the surface} can be chosen to all belong to $C$.
The result follows because the convex maps $\bar{u}_n$ on the disc are zero on the boundary, so for any compact set $C$ in the interior of the disc, the difference between the minimum and the maximum of  $\bar{u}_n$ on $C$ cannot be arbitrary large.
\end{proof}

Proposition~\ref{this propo} is now proved.

 \section{Proof of Theorem \ref{1.4}}\label{sec proof}
 
 The proof relies on the two following results. 
 
\begin{theorem}[{\cite{LF00}}]\label{ScH}
 Let $(S,d)$ be a  metric induced by a Riemannian metric of sectional curvature $<-1$. Then there exists 
a $C^\infty$ Fuchsian C-convex   $u:\mathbb{H}^2\to [0,\pi/2[$ such that $\bar{d}_u$ is
isometric to $d$.
\end{theorem}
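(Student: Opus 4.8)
This is the Fuchsian case of the Labourie--Schlenker isometric embedding theorem \cite{LF00}; I only sketch how one would establish it. Since $d$ comes from a smooth Riemannian metric, write $d=d_g$ with $K_g<-1$ everywhere on $S$. A pair $(u,\Gamma)$ as in the statement is the same datum as a smooth, $\Gamma$-invariant, space-like convex graph $S_u$ in the AdS cylinder, and asking that $\bar d_u$ be isometric to $d$ amounts to asking that the induced metric of this graph, pushed to $\mathbb{H}^2/\Gamma\cong S$, be isometric to $g$. So the unknowns are a hyperbolic structure on $S$ (equivalently the Fuchsian group $\Gamma$, i.e.\ a point of Teichm\"uller space) together with the function $u$. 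If $B$ denotes the shape operator of $S_u$, the Gauss equation in an ambient metric of curvature $-1$ reads $K_g=-1-\det B$, so $K_g<-1$ is exactly $\det B>0$ (the graph is convex), and moreover $\det B=-(1+K_g)$ is pinched between two positive constants. One is thus facing a Monge--Amp\`ere type problem for the pair $(u,\Gamma)$, which I would attack by the continuity method.

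First, the set of realizable metrics is nonempty: a metric homothetic to a hyperbolic one, $(1-r^2)g_{\mathbb{H}^2}$ with $0<r<1$, is realized by the umbilic surface $H_r$ from Section~\ref{sec ads}, for any cocompact Fuchsian $\Gamma$. Given an arbitrary $g$ with $K_g<-1$, I would join it, inside the space of metrics of curvature $<-1$ on $S$ (which is path-connected), to such a model metric built on the hyperbolic structure conformal to $g$; call $g_t$, $t\in[0,1]$, the resulting path and $\mathcal A\subset[0,1]$ the set of $t$ for which $g_t$ is realized by a smooth Fuchsian C-convex $(u_t,\Gamma_t)$. Then $\mathcal A$ is nonempty, and the goal is to show it is open and closed.

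Openness is the implicit function theorem: the linearization at a solution of the map $(u,\Gamma)\mapsto(\text{induced metric})$, acting on deformations modulo isotopies of $S$, is an elliptic operator whose surjectivity uses the uniform convexity $\det B>0$ and the sign of the ambient curvature, the Teichm\"uller directions accounting for precisely what is missed when one varies $u$ alone; its injectivity is the infinitesimal rigidity of convex Fuchsian surfaces of curvature $<-1$, which also yields uniqueness. Closedness is the analytic core and requires a priori estimates along a sequence $g_{t_n}\to g_{t_\infty}$ with solutions $(u_n,\Gamma_n)$: (i) the hyperbolic area is fixed by Gauss--Bonnet, and since $\bar d_{u_n}\le d_{\mathbb{H}^2}$ by Lemma~\ref{major} while $(\mathbb{H}^2/\Gamma_n,\bar d_{u_n})$ stays nondegenerate, the systoles are bounded below, so by Mumford compactness \cite{Mumf} the $\Gamma_n$ subconverge to a Fuchsian $\Gamma_\infty$; (ii) a uniform $C^0$ bound $u_n\le R<\pi/2$, keeping $S_{u_n}$ inside the AdS cylinder and away from light-like support planes, obtained from the control of the induced metric together with convexity as in Lemma~\ref{convex then spacelike} and Proposition~\ref{majorfunc}; (iii) a $C^1$ bound making the $S_{u_n}$ uniformly space-like; (iv) a Pogorelov-type interior $C^2$ bound on $B$ (its determinant being already controlled), after which uniform ellipticity and Schauder estimates give $C^\infty$ bounds. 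Passing to the limit realizes $g_{t_\infty}$, so $\mathcal A$ is closed; hence $\mathcal A=[0,1]$ and $1\in\mathcal A$ produces the desired $(u,\Gamma)$.

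The step I expect to be the real obstacle is (iv), the interior $C^2$ (Pogorelov) estimate for the Monge--Amp\`ere system, carried out simultaneously with the $C^0$ bound preventing the graph from escaping to $\partial_\infty AdS^3$ and the $C^1$ bound preventing it from becoming light-like; by contrast, controlling the possible degeneration of the Fuchsian groups in (i) is soft and is essentially already contained in the compactness results of Section~\ref{sec:fuchsian}.
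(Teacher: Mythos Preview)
The paper does not prove Theorem~\ref{ScH}: it is stated as a result quoted from \cite{LF00} and used as a black box in Section~\ref{sec proof}. So there is no ``paper's own proof'' to compare against; what you have written is a sketch of the Labourie--Schlenker argument rather than an alternative to anything in the present article.

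That said, your sketch is a faithful outline of the continuity method one expects in \cite{LF00}: nonemptiness via umbilic slices, openness via infinitesimal rigidity and the implicit function theorem, and closedness via a priori estimates coupled with Mumford compactness for the groups. It is worth noting that the estimates of Section~\ref{sec:fuchsian} in the present paper (Proposition~\ref{this propo}, Proposition~\ref{majorfunc}) are much softer than what is needed for closedness in the smooth category: they only give $C^0$ control and convergence of groups, not the $C^2$/Pogorelov-type bounds you list under~(iv), so one cannot simply import them. Your identification of~(iv) as the hard analytic step is correct, but as written this remains a roadmap, not a proof; for the purposes of this paper the right thing to do is exactly what the author does, namely cite \cite{LF00}.
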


\begin{theorem}\label{thm:approx smooth}
Let $(S,d)$ be a metric of curvature $\leq-1$. Then  there exists a sequence $(S_n,d_n)$ converging uniformly to $(S,d)$, where $S_n$ are homeomorphic to $S$ and $d_n$ are induced by Riemannian metrics  with  sectional curvature $<-1$  .
\end{theorem}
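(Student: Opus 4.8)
The plan is to produce the approximating metrics by a two-stage smoothing of the Alexandrov metric $(S,d)$ of curvature $\leq -1$, controlling the curvature bound at each stage. First I would recall the local structure theory for surfaces with an upper curvature bound (CAT$(-1)$ locally, after passing to the universal cover $(\widetilde S, \widetilde d)$ with its $\pi_1(S)$-action): by the resolution of the uniformization problem for Alexandrov surfaces (Alexandrov--Zalgaller, Reshetnyak), the metric $d$ is induced by a conformal metric $e^{2\lambda}|dz|^2$ on $S$ where $\lambda$ is the difference of two subharmonic functions and the curvature measure $\omega = -e^{2\lambda}\,dA - $ (atomic/singular part) satisfies $\omega \leq -\mathrm{Area}$; equivalently the Gauss curvature measure of $d$ is $\leq -1$ times the area measure in the sense of measures. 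The key point of this stage is that an upper curvature bound translates into a \emph{one-sided} inequality on a distributional curvature, which behaves well under convolution.

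Next I would perform the smoothing. Working on $S$ (compact, so everything is uniform), mollify the conformal factor: set $\lambda_\varepsilon = \lambda * \rho_\varepsilon$ in conformal coordinates with a suitable partition of unity, obtaining smooth metrics $g_\varepsilon = e^{2\lambda_\varepsilon}|dz|^2$. Since $\Delta\lambda$ as a distribution (the curvature form up to the conformal factor) is $\leq -e^{2\lambda}$ in the measure sense, and convolution preserves this one-sided bound, the Gauss curvature of $g_\varepsilon$ satisfies $K_{g_\varepsilon} = -e^{-2\lambda_\varepsilon}\Delta\lambda_\varepsilon \leq -e^{-2\lambda_\varepsilon} e^{2\lambda}*\rho_\varepsilon$; a short estimate using uniform continuity of $\lambda$ on compacts (and Jensen/convexity to compare $e^{2\lambda}*\rho_\varepsilon$ with $e^{2\lambda_\varepsilon}$) gives $K_{g_\varepsilon} \leq -1 + o(1)$ uniformly. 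To get the \emph{strict} bound $K < -1$ that Theorem~\ref{ScH} requires, I would then rescale slightly: replace $g_\varepsilon$ by $c_\varepsilon^2 g_\varepsilon$ with $c_\varepsilon \to 1$, $c_\varepsilon > 1$ chosen so that $c_\varepsilon^{-2}(-1+o(1)) < -1$; this multiplies the curvature by $c_\varepsilon^{-2}$ and fixes the sign of the error. Finally, uniform closeness $d_{g_\varepsilon} \to d$ follows because $\lambda_\varepsilon \to \lambda$ uniformly on compacts (hence the conformal factors, and therefore lengths of curves, converge uniformly), and the rescaling $c_\varepsilon \to 1$ is harmless; $S_n$ is just $S$ with the smooth metric, so homeomorphic to $S$.

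The main obstacle I anticipate is the \emph{singular part} of the curvature measure and the precise sense in which ``curvature $\leq -1$'' is preserved under mollification. An Alexandrov metric of curvature $\leq -1$ can have, in principle, negative curvature concentrated on sets of measure zero (in the CBA setting the singular part of the curvature must be $\leq 0$, consistent with the bound), and one must check that smoothing a measure that is $\leq -\mathrm{dA}$ yields a smooth density that is $\leq -1$ after accounting for the change in the reference area form $e^{2\lambda_\varepsilon}$ versus $e^{2\lambda}$. This is exactly the point where convexity of $t\mapsto e^t$ (Jensen's inequality for the mollifier $\rho_\varepsilon$) is needed, and where one must be careful that the $o(1)$ errors are controlled \emph{uniformly} on the compact surface $S$ — which is what the subsequent rescaling by $c_\varepsilon$ is designed to absorb. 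An alternative, if one prefers to avoid Reshetnyak's representation, is to quote directly the smoothing theorems for Alexandrov surfaces with bounded integral curvature (e.g.\ via the heat flow on Alexandrov spaces, or the classical polyhedral approximation combined with corner-rounding), which produce smooth metrics with curvature bounds converging to the given one-sided bound; I would cite such a result and then apply the same rescaling trick to upgrade $\leq -1+o(1)$ to the strict inequality $<-1$ needed to invoke Theorem~\ref{ScH}.
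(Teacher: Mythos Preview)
Your route is entirely different from the paper's. The paper proceeds geometrically in two steps: first it triangulates $(S,d)$ by small simple triangles (using that a CAT$(-1)$ surface is BIC), replaces each triangle by its hyperbolic comparison triangle to obtain a hyperbolic cone metric with all cone angles $\geq 2\pi$, and proves uniform closeness by explicit length estimates along broken geodesics; second, it smooths each cone point by realizing a neighbourhood as a space-like circular cone in anti-de Sitter space and rounding the apex, where the Gauss equation in $AdS^3$ gives sectional curvature $\leq -1$, upgraded to $<-1$ by a homothety. Your throwaway ``alternative'' of polyhedral approximation plus corner-rounding is in fact exactly the paper's argument; the Reshetnyak conformal-factor scheme is never used.

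Your conformal approach has an elegant core, but as written it has genuine gaps. First a sign slip: for $g=e^{2\lambda}|dz|^2$ one has $K_g=-e^{-2\lambda}\Delta\lambda$, so $K\leq -1$ reads $\Delta\lambda\geq e^{2\lambda}$, not $\Delta\lambda\leq -e^{2\lambda}$. With the correct sign the Jensen step is actually stronger than you claim: $(\Delta\lambda)\ast\rho_\varepsilon\geq(e^{2\lambda})\ast\rho_\varepsilon\geq e^{2(\lambda\ast\rho_\varepsilon)}$ gives $K_{g_\varepsilon}\leq -1$ on the nose, with no $o(1)$ to absorb. The real problem is the convergence of distances. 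You assert ``uniform continuity of $\lambda$ on compacts'' and hence $\lambda_\varepsilon\to\lambda$ uniformly, but $\lambda$ need not be continuous: a metric of curvature $\leq -1$ may carry cone points of angle $2\pi\alpha>2\pi$, at which $\lambda\sim(\alpha-1)\log|z|\to-\infty$. So the step ``conformal factors, and therefore lengths of curves, converge uniformly'' fails as stated. You correctly flag the singular part of the curvature as an obstacle, but it is not just a matter of checking an inequality under mollification; it breaks your convergence mechanism. One could try to repair this via the subharmonicity of $\lambda$ (then $\lambda_\varepsilon\downarrow\lambda$ and lengths decrease monotonically), but upgrading this to \emph{uniform} convergence of the induced distances on $S$ still requires an equicontinuity argument you have not supplied. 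Finally, ``with a suitable partition of unity'' is not innocent: convolution is chart-dependent, and cutting off and summing destroys the pointwise inequality $\Delta\lambda_\varepsilon\geq e^{2\lambda_\varepsilon}$; globalizing the construction while preserving the curvature bound is a real issue, not a one-clause aside.
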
 

Although Theorem~\ref{thm:approx smooth} may seem well-known, we didn't find any reference for it, so we will prove it in Section~\ref{sec:approx smooth}.
Note that we are not aware if the analogue of Theorem~\ref{thm:approx smooth} holds for metrics of curvature $\leq 1$.

Let $d$ be a metric of  curvature $\leq -1$ on  $S$. From Theorem~\ref{thm:approx smooth},
there exists a sequence $(d_n)_n$ of  metrics induced by Riemannian metrics with sectional curvature $< -1$ on $S$ that   converges uniformly to $d$. 
 By  Theorem~\ref{ScH}, for each $n\in\mathbb{N}$ there exists a Fuchsian C-convex pair $(u_n,\Gamma_n)$ such that $\bar{d}_{u_n}$ is isometric to $d_n$ and $u_n$ is smooth. 
By Proposition~\ref{this propo} there is a subsequence of $(\Gamma_n)_n$ converging to a Fuchsian group $\Gamma$, and $\beta<\pi/2$ such that $0\leq u_n<\beta$. 
 
 So Lemma~\ref{lem:subseq cv} and Proposition~\ref{5.10} applies: up to extracting a subsequence, there is a function $u$ such that the induced distance on  $\bar{d}_u$ (the quotient of $d_u$ by $\Gamma$) is the uniform limit of $(\mathbb{H}^2/\Gamma_n,\bar{d}_{u_n})$, i.e. the uniform limit of $(S,d_n)$.
The limit for uniform convergence is unique,  up to isometries \cite{BurBur}, 
so $\bar{d}_u$  is isometric to $d$.  Theorem~\ref{1.4} is proved, with $L$ the quotient of the AdS cylinder of Section~\ref{sec ads} by $\Gamma$.

\section{Approximation by smooth metrics}\label{sec:approx smooth}

 In the following we will use the uniform convergence, so let's recall its definition.
  \begin{definition} \label{def unif conv}
    We say that a sequence of metric spaces $(S_n,d_n)_n$  \emph{converges uniformly} to the metric space $(S,d)$ if there exist homeomorphisms  $f_n:S\longrightarrow S_n$ such that 
    $$\underset{x,y\in S}{\sup}|d_n(f_n(x),f_n(y))-d(x,y)|\xrightarrow[n\to\infty]{} 0~.$$
            \end{definition}
If $S_n=S$ and $f_n=id$, then this is the usual definition of uniform convergence of distance functions.

We want to check that a metric of curvature $\leq -1$ on the closed surface $S$ can be approximated (in the sense of the uniform convergence) by distances induced by Riemannian metrics with sectional curvature $<-1$.
We will first  approximate by hyperbolic metrics with conical singularities 
of negative curvature. Then we will ``smooth'' those cone metrics.

\subsection{Approximation of metrics by polyhedral metrics}

 Let $(X,d_0)$ be a   metric space such that  every pair of points can be joined by a shortest path.
  A \emph{(geodesic) triangle} $\Delta$ of $X$ consists of three points $x,y,z\in X$ and  shortest paths $[x,y],[y,z]$ and $[z,x]$. A hyperbolic 
  comparison triangle for $\Delta$ is a geodesic triangle $\tilde{\Delta}$ in the hyperbolic
   space  with vertices $\tilde{x},\tilde{y}$ and $\tilde{z}$, such that $d_0(x,y)=d_{\mathbb{H}^2}(\tilde{x},\tilde{y})$    , $d_0(y,z)=d_{\mathbb{H}^2}(\tilde{y},\tilde{z})$, $d_0(x,z)=d_{\mathbb{H}^2}(\tilde{x},\tilde{z})$. The interior angle of   $\tilde{\Delta}$ at  $\tilde{x}$ is called the  \emph{comparison angle} at $x$ of the triangle $\Delta$.

\begin{definition} Let $\gamma,\gamma'$ be two non trivial shortest paths issued from the same point $x$.
Let $\tilde{\angle}(\gamma(t)x\gamma'(t'))$ be the angle at $\tilde{x}$ of the  comparison  triangle $\tilde{\Delta}$ with vertices $\tilde{\gamma}(t),\tilde{x}$ and $\tilde{\gamma}'(t')$ in the hyperbolic  plane  corresponding to the triangle $\Delta(\gamma(t)x\gamma(t'))$ in $X$. Then the \emph{upper angle}  at $x$ of $\gamma$ and $\gamma'$  is defined by 
\begin{equation}\label{eq:angle sup}\underset{t,t'\longrightarrow 0}{\limsup \tilde{\angle}}(\gamma(t)x\gamma'(t')).\end{equation}
\end{definition}

\begin{definition}
We say that an intrinsic metric space $(X,d_0)$ is  CAT$(-1)$ if the upper angle between any couple of sides of every geodesic triangle with distinct vertices is not greater than the angle between the corresponding sides of its
comparison triangle in the hyperbolic plane. 
\end{definition}

Let $B_{d_0}(x,r)$ be the ball of center  $x$ and radius $r$ in $(X,d_0)$.
\begin{definition} \label{curvature}
An intrinsic metric space  $(X,d_0)$ has  curvature $\leq -1$ (in the Alexandrov sense), if for any  $x$ there exists  $r$ such that  $B_{d_0}(x,r)$ endowed with the induced (intrinsic) distance is CAT$(-1)$.
\end{definition}

Let us remind the notion of bounded integral curvature \cite[Chapter I, p. 6]{Alek67}. 
A \emph{simple triangle} is a triangle bounding an open set homeomorphic to a disc, consisting of three distinct points (the vertices of the triangle) and three shortest paths joining these points,  and which is convex relative to the boundary, i.e.  no two points of the boundary of the triangle, can be joined by a curve outside the triangle, which is shorter than a suitable part of the boundary joining the points, (see \cite{Alek67} for more details).

\begin{definition}
An intrinsic distance $d_0$ on a surface $S$ is said to be of bounded integral curvature (in short, BIC), if $(S,d_0)$ verifies the following property: 

 For every $x\in S$ and every neighborhood $N_x$ of $x$ homeomorphic  to an open disc, for any finite system $\mathcal{T}$ of pairwise non-overlapping simple triangles $T$ belonging to $N_x$, the sum of the excesses 
$$\delta_0(T)=\bar{\alpha}_T+\bar{\beta}_T+\bar{\gamma}_T-\pi,$$
of the triangles $T\in\mathcal{T}$ with upper angles $(\bar{\alpha}_T,\bar{\beta}_T,\bar{\gamma}_T)$ is bounded from
above by a number $C$ depending only on the neighborhood $N_x$, i.e. 

$$\sum\limits_{T\in\mathcal{T}}\delta_0(T)\leq C.$$

\end{definition}

The main tool for our approximation result is the following.  

\begin{theorem}[{\cite[Theorem 2 p. 59]{Alek67}}] \label{BIC}
Let  $\epsilon>0$. A compact $BIC$ surface admits a triangulation by a  finite number of arbitrary   non overlapping simple triangles of diameter $<\epsilon$.
\end{theorem}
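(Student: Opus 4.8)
The plan is to build the triangulation by choosing a sufficiently fine finite set of points on $S$ and joining well-chosen nearby pairs by shortest paths; the real work is to arrange this so that every complementary curvilinear triangle is \emph{simple} in the sense above and has diameter $<\epsilon$.

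First I would record the standard local consequences of the BIC hypothesis \cite{Alek67}: the distance $d_0$ has an associated curvature measure $\omega=\omega^+-\omega^-$ with $|\omega|$ finite because $S$ is compact, and at every point $x$ one has $\omega^+(\{x\})<2\pi$ --- this last fact being precisely what forces $(S,d_0)$ to be a topological surface near $x$. From this one gets that each $x$ has arbitrarily small neighborhoods $V$ which are homeomorphic to a disc, are convex (any two points of $V$ are joined by a shortest path of $S$ contained in $V$), and enclose total curvature $|\omega|(V)$ as small as we please away from the possible atom at $x$ --- near $x$ the metric is a small-curvature perturbation of a model cone of total angle $<2\pi$. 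By compactness, I would then cover $S$ by finitely many such neighborhoods $V_1,\dots,V_m$, each of diameter $<\epsilon/4$, with $|\omega|(V_i)$ smaller than a fixed absolute constant $\eta<\pi$ except at the finitely many atoms of $\omega^+$ of mass close to $2\pi$, which will be forced to be vertices of the triangulation.

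Next, I would fix $\delta$ smaller than a Lebesgue number of this cover and small enough that any subset of diameter $\le 3\delta$ lies in a single $V_i$, and take a $\delta$-dense finite set $P=\{p_1,\dots,p_n\}$ containing all the large atoms of $\omega^+$. Joining two points of $P$ within distance $2\delta$ by a shortest path, I would then pass to ``general position'': since, for each fixed point of a BIC surface, all but a measure-zero set of points are joined to it by a unique shortest path, and since inside a small low-curvature neighborhood a shortest path between two close points behaves almost like a Euclidean segment, one may move the $p_i$ slightly inside small balls so that all the chosen arcs are simple, pairwise meet only at common endpoints, and are nowhere concurrent except at vertices. These arcs cut each $V_i$, hence $S$, into curvilinear polygons; adding further shortest diagonals (and perturbing once more) I may assume every complementary face is bounded by exactly three shortest arcs. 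Finally I would check each such face is a simple triangle: it is an embedded disc and is convex relative to its boundary. This is where the smallness $|\omega|(\text{face})<\eta<\pi$ and the containment in a convex $V_i$ are used --- via Gauss--Bonnet for BIC surfaces, a region of total curvature $<\pi$ bounded by three shortest paths inside a small convex set cannot be shortcut along its boundary and must be an embedded disc --- while diameter $<\epsilon$ is automatic since the face sits inside one $V_i$.

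The step I expect to be the main obstacle is the passage to general position together with the verification that the chosen shortest arcs assemble into an honest simplicial decomposition rather than a tangled configuration: in a BIC surface shortest paths can be non-unique and curvature can concentrate, so controlling all the incidences at once is delicate. The BIC hypotheses --- finiteness of $|\omega|$ and the absence of $2\pi$-atoms --- together with the perturbation argument are exactly what makes this control possible, and this is the technical heart of the proof.
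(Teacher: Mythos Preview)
The paper does not prove this theorem; it is quoted as a black box from Alexandrov--Zalgaller \cite[Theorem~2, p.~59]{Alek67} and used as input to the approximation argument of Section~\ref{sec:approx smooth}. So there is no ``paper's own proof'' to compare against.

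Your sketch is in the spirit of the original Alexandrov--Zalgaller argument: one exploits the local finiteness of the curvature measure and the absence of $2\pi$-atoms to cover the surface by small convex discs, picks a fine net, and joins nearby points by shortest arcs to obtain a geodesic cell complex which one then subdivides. You have correctly located the genuine difficulty yourself: the ``general position'' step. On a BIC surface shortest paths may be non-unique, may branch, and may meet in uncontrolled ways, so the assertion that a small perturbation of the vertices forces all chosen shortest arcs to be simple and to meet only at endpoints is not a soft transversality statement --- it is exactly the technical core of the Alexandrov--Zalgaller theory, and it takes several preparatory lemmas (on the structure of geodesics, on convexity of small triangles, on the behavior of angles under refinement) to carry out. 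Your single sentence ``one may move the $p_i$ slightly'' hides all of this, and without it the subsequent verification that the complementary faces are simple triangles convex relative to their boundary does not go through.

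In short: your outline is the right one and matches the classical route, but what you have written is a programme rather than a proof; the paper sidesteps the issue by citing the result, and if you want an actual argument you should either do the same or work through Chapters~II--III of \cite{Alek67}.
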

We will also need the following result to prove that the sum of the angles in a cone point is not less than $2\pi$, it corresponds to Theorem 11 in \cite[Chapter II, p. 47]{Alek67}.

\begin{lemma}\label{pBIC}
Let $p$ be a point on a $BIC$ surface such  that  there  is  at  least  one shortest  arc  containing $p$ in  its  interior.  Then  for  any  decomposition  of  a  neighborhood of $p$ into sector convex relative to the boundary formed by geodesic rays issued from $p$ such that the upper angles between the sides of these sectors exist and
do not exceed $\pi$, the total sum of those angles is not less than $2\pi$.
\end{lemma}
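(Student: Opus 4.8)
\textbf{Proof plan for Lemma~\ref{pBIC}.}
The plan is to reduce the statement to a purely two-dimensional Gauss--Bonnet argument on the surface with bounded integral curvature, exploiting the hypothesis that $p$ lies in the interior of a shortest arc. First I would fix a neighborhood $N_p$ of $p$ homeomorphic to a disc, chosen small enough that $N_p$ contains the portion of the shortest arc through $p$ and so that the finitely many geodesic rays $\rho_1,\dots,\rho_k$ issued from $p$ (in cyclic order around $p$) divide $N_p$ into sectors $\Sigma_1,\dots,\Sigma_k$, each convex relative to its boundary, with upper angles $\bar\alpha_1,\dots,\bar\alpha_k$ at $p$, each $\bar\alpha_i\le\pi$. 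The goal is $\sum_i\bar\alpha_i\ge 2\pi$.

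Next I would apply the BIC triangulation result, Theorem~\ref{BIC}, inside each sector: each $\Sigma_i$, being convex relative to its boundary, can be triangulated by finitely many pairwise non-overlapping simple triangles of small diameter whose vertices include the two endpoints of $\rho_i$ and $\rho_{i+1}$ near $p$, so that the union of all these triangles over all sectors is a triangulation of $N_p$ in which $p$ is a vertex, and the angles of the triangles at $p$ sum, sector by sector, to the upper angles $\bar\alpha_i$ (using superadditivity/additivity of upper angles along a convex subdivision, which holds in a BIC surface). Summing the excess formula $\delta_0(T)=\bar\alpha_T+\bar\beta_T+\bar\gamma_T-\pi$ over all triangles $T$ in the triangulation and using that the integral curvature $\omega$ of $N_p$ equals $\sum_T\delta_0(T)$ (by definition of BIC and the additivity of curvature), I would get the Gauss--Bonnet-type identity
\begin{equation*}
\sum_i\bar\alpha_i \;=\; 2\pi \;-\; 2\pi\chi(N_p) \;+\; (\text{angle contributions at the other vertices}) \;+\; \omega(N_p)\,.
\end{equation*}
More precisely, collecting the triangle angles around every interior vertex other than $p$ and along the boundary, and using $\chi(N_p)=1$, the sum of triangle angles at $p$ equals $2\pi$ plus the integral curvature carried by the open disc $N_p\setminus\{p\}$ minus the boundary turning; the key point is that the contribution at $p$ itself is isolated.

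The main obstacle, and where the hypothesis that $p$ is interior to a shortest arc is essential, is ruling out that the integral curvature concentrated at $p$ (i.e. $2\pi$ minus the total angle at $p$) is positive: I must show $\omega(\{p\})\le 0$, equivalently the total angle $\sum_i\bar\alpha_i$ at $p$ is $\ge 2\pi$. Here I would argue that a shortest arc passing through $p$ forces the two ``half-angles'' on either side of the arc at $p$ to each be $\ge\pi$: if one side had total angle $<\pi$, one could push the arc slightly to that side and shorten it, contradicting that it is a shortest path through $p$. Since the subdivision into sectors refines (or can be taken compatible with) the partition by this shortest arc --- or, more carefully, since upper angles are superadditive under refinement so $\sum_i\bar\alpha_i$ dominates the sum of the two half-turn angles along the arc --- we conclude $\sum_i\bar\alpha_i\ge\pi+\pi=2\pi$. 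The delicate technical points are the additivity versus mere superadditivity of upper angles under the sector subdivision (which is exactly why the sectors are required to be convex relative to the boundary, so that the relevant shortest paths stay inside), and the compatibility of the triangulation from Theorem~\ref{BIC} with the prescribed rays; both are handled by the standard machinery of Alexandrov surfaces of bounded integral curvature as in \cite{Alek67}.
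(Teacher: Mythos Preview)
The paper does not prove this lemma; it is cited as Theorem~11 of \cite[Chapter~II, p.~47]{Alek67} and used as a black box, so there is no argument in the paper to compare your sketch against.

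On the merits of your sketch: the Gauss--Bonnet portion is circular. After triangulating and summing excesses you reduce to showing $\omega(\{p\})\le 0$, i.e.\ that the total angle at $p$ is at least $2\pi$, which is exactly the statement; the triangulation contributes nothing. The only real content is the shortcut argument --- if one side of the shortest arc through $p$ had angle $<\pi$ one could shorten the arc --- and that is indeed the crux.

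There is, however, a genuine gap in your passage from the two half-angles $\beta_\pm$ along the arc to the given sector sum $\sum_i\bar\alpha_i$. Superadditivity of upper angles (the triangle inequality $\overline\angle(\gamma_1,\gamma_3)\le\overline\angle(\gamma_1,\gamma_2)+\overline\angle(\gamma_2,\gamma_3)$) says that refinement \emph{increases} the sum. But the given decomposition $\{\Sigma_i\}$ is not a refinement of $\{H_+,H_-\}$ unless the arc's two directions happen to lie among the $\rho_j$, and passing to a common refinement yields a sum that dominates both $\sum_i\bar\alpha_i$ and $\beta_++\beta_-$ without comparing them to each other. What is actually needed here is \emph{additivity} of sector angles for sectors convex relative to their boundary --- a nontrivial result in the BIC theory, and precisely the reason that hypothesis appears in the statement --- or else a direct argument of Alexandrov's type showing that for each ray $\rho_j$ the two upper angles it makes with the two halves of the shortest arc already sum to at least $\pi$, and then organizing the cyclic sum accordingly.
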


To get a triangulation of our surface, we will  use some properties of BIC surfaces, so let's consider the following  Lemma.
\begin{lemma} \label{CATT}
A metric of curvature $\leq -1$  is a  $BIC$ surface.
\end{lemma}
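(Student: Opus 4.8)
The goal is to show that a metric of curvature $\leq -1$ on a closed surface $S$ is of bounded integral curvature (BIC), which will then allow us to invoke Theorem~\ref{BIC} to triangulate. The natural approach is to exploit the well-known comparison between the notion of curvature $\leq -1$ in the Alexandrov sense and the notion of bounded integral curvature developed in \cite{Alek67}: both are local conditions, so it suffices to check the BIC property on a neighborhood of each point, and by compactness of $S$ finitely many such neighborhoods cover $S$.

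The key step is to produce, for each point $x\in S$, a neighborhood $N_x$ together with a uniform bound $C$ on the sum of excesses of any finite system of pairwise non-overlapping simple triangles contained in $N_x$. First I would pick, using Definition~\ref{curvature}, a radius $r$ such that $B_{d_0}(x,r)$ with its induced intrinsic distance is CAT$(-1)$, and take $N_x$ to be a slightly smaller ball so that all the geometry stays inside the CAT$(-1)$ region. Inside a CAT$(-1)$ space, every geodesic triangle $T$ satisfies the comparison inequality: its upper angles are bounded above by the corresponding angles of the comparison triangle in $\mathbb{H}^2$. Hence the excess $\delta_0(T)=\bar\alpha_T+\bar\beta_T+\bar\gamma_T-\pi$ is at most the excess of the comparison triangle in $\mathbb{H}^2$, which by Gauss--Bonnet equals $-\operatorname{Area}_{\mathbb{H}^2}(\tilde T)\leq 0$. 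Thus each individual excess is non-positive. For a finite system $\mathcal T$ of pairwise non-overlapping simple triangles, summing gives $\sum_{T\in\mathcal T}\delta_0(T)\leq 0$, so we may take $C=0$ (or any positive constant). This establishes the BIC property with a uniform bound depending only on $N_x$, and actually shows that the curvature measure is non-positive.

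The main obstacle, and the point requiring genuine care, is matching the precise definitions: the definition of BIC uses \emph{upper angles} of \emph{simple triangles} (triangles bounding a disc and convex relative to the boundary), whereas the CAT$(-1)$ condition as stated in the excerpt concerns \emph{geodesic triangles with distinct vertices}. One must check that a simple triangle in $N_x$, with its three shortest-path sides, is in particular a geodesic triangle with distinct vertices to which the CAT$(-1)$ comparison applies, and that the upper angle appearing in the excess is exactly the quantity controlled by CAT$(-1)$; this is where one invokes the standard equivalence (e.g. from \cite{BurBur} or \cite{Alek67}) between angle comparison for all triangles and angle comparison for comparison triangles, together with the fact that in a CAT$(-1)$ space upper angles coincide with angles and the sum of angles of a geodesic triangle does not exceed the sum for the comparison triangle. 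A secondary subtlety is ensuring shortest paths between nearby points stay inside $B_{d_0}(x,r)$, which follows by choosing $N_x$ with radius at most $r/3$ so that geodesics between points of $N_x$ cannot escape $B_{d_0}(x,r)$. Once these definitional matchings are in place, the argument is essentially the Gauss--Bonnet computation above, and BIC follows.
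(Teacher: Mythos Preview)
Your proof is correct. The paper's own argument is shorter and proceeds by citation: it notes that CAT$(-1)$ implies CAT$(0)$ (Bridson--Haefliger) and then invokes \cite[Lemma~2.18]{FFDS}, which already shows that a CAT$(0)$ surface is BIC; locality of both notions finishes the proof. Your direct computation---bounding each excess $\delta_0(T)$ by the excess of the hyperbolic comparison triangle, which equals $-\operatorname{Area}_{\mathbb{H}^2}(\tilde T)\leq 0$---is precisely the content of that cited lemma (with $\mathbb{H}^2$ replacing $\mathbb{E}^2$), so the two arguments coincide in substance; you have simply unpacked the citation and handled the definitional subtleties explicitly.
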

\begin{proof}
We have that a CAT$(0)$ surface is a  BIC surface  (proved in \cite{FFDS}, Lemma 2.18), and that a CAT$(-1)$ surface is also a CAT$(0)$ surface (see \cite{BH99}, Chapter II, page 165). The Lemma follows because of the local nature of the definition of BIC and curvature $\leq -1$.
\end{proof}

For a BIC surface, the angle exists, that means that in \eqref{eq:angle sup}
the limit exists in place of the limsup \cite{Alek67}. So in the following we will speak about angles rather than upper angles.

\begin{theorem} \label{approx}
Let $(S,d)$ be a metric of curvature $\leq-1$ on the closed surface $S$. 
Then there exists a sequence $(S_n,d_n)$  converging uniformly to $(S,d)$, where $S_n$ is homeomorphic to $S$ and $d_n$ is the metric induced by  a  
hyperbolic metric with conical  singularities of negative curvature on $S_n$.
\end{theorem}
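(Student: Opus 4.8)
\textbf{Proof proposal for Theorem~\ref{approx}.}

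The plan is to fix $\epsilon_n\to 0$ and, for each $n$, produce a polyhedral hyperbolic metric $d_n$ on $S$ whose distance function is $\epsilon_n$-close to $d$. By Lemma~\ref{CATT} the metric $d$ makes $(S,d)$ a BIC surface, so Theorem~\ref{BIC} applies: $S$ admits a triangulation $\mathcal{T}_n$ into finitely many non-overlapping simple geodesic triangles of diameter $<\epsilon_n$. The key idea is to replace each simple triangle $T\in\mathcal{T}_n$, with side lengths $a,b,c$, by the hyperbolic comparison triangle $\tilde T$ in $\mathbb{H}^2$ having the same three side lengths, and to glue these hyperbolic triangles along their common edges following the combinatorics of $\mathcal{T}_n$. (The side lengths of a triangle of diameter $<\epsilon_n$ satisfy the strict triangle inequality because $(S,d)$ is a length space in which shortest paths are unique on small scales, so $\tilde T$ is genuinely non-degenerate.) This gluing yields a surface $S_n$ homeomorphic to $S$ equipped with a hyperbolic metric with conical singularities located at the vertices of the triangulation; call the induced intrinsic distance $d_n$.

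Next I would control the cone angles. At a vertex $p$ the cone angle of $d_n$ is the sum of the \emph{comparison} angles at $p$ of the hyperbolic triangles $\tilde T$ incident to $p$. Because $(S,d)$ has curvature $\leq -1$, the CAT$(-1)$ comparison inequality says that the genuine angle of $T$ at $p$ is at most the comparison angle of $\tilde T$ at $p$; summing over the triangles around $p$, the cone angle of $d_n$ at $p$ is at least the sum of the genuine angles of $(S,d)$ around $p$. Since $(S,d)$ is BIC and $p$ lies in the interior of some shortest arc (any vertex of a fine triangulation of a surface does, as it is a non-extremal point of some edge path), Lemma~\ref{pBIC} gives that this sum of genuine angles is $\geq 2\pi$. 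Hence every cone angle of $d_n$ is $\geq 2\pi$, i.e. $d_n$ is a hyperbolic metric with conical singularities of \emph{negative} curvature, as required.

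Then I would prove $d_n\to d$ uniformly. By construction, for any two vertices $x,y$ joined by an edge of $\mathcal{T}_n$ we have $d_n(x,y)=d(x,y)$, since that edge has the same length in both metrics. For general points one estimates as follows: a $d$-shortest path from $x$ to $y$ can be subdivided into $O(1/\epsilon_n)$ subarcs each of $d$-length $\lesssim\epsilon_n$; replacing each subarc by a path in $S_n$ staying within the corresponding triangle(s) (whose hyperbolic diameter is $O(\epsilon_n)$, as a hyperbolic triangle with sides $<\epsilon_n$ has diameter $<\epsilon_n$) shows $d_n(x,y)\le d(x,y)+o(1)$; symmetrically, projecting a $d_n$-shortest path back to $(S,d)$ gives the reverse bound. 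More precisely, using that on scale $\epsilon_n$ both $(S,d)$ and $(S_n,d_n)$ are CAT$(-1)$ (the latter because a hyperbolic cone surface with all cone angles $\ge 2\pi$ is CAT$(-1)$, by the Cartan–Hadamard / gluing theorem of \cite{BH99}), one shows that the identification of vertices extends to homeomorphisms $f_n:S\to S_n$ with $\sup_{x,y}|d_n(f_n(x),f_n(y))-d(x,y)|\to 0$, which is exactly uniform convergence in the sense of Definition~\ref{def unif conv}.

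The main obstacle I expect is the quantitative distance comparison: one must show that edge-by-edge agreement of the two metrics propagates to a genuinely uniform (not merely pointwise, and with an $n$-independent modulus) closeness of $d_n$ and $d$. This requires a uniform lower bound on the "quality" of the triangulation — for instance that a shortest path in either metric can be approximated by an edge path of comparable length with a controlled number of segments — which in turn uses the CAT$(-1)$ structure on small balls (uniqueness of short geodesics, convexity of the distance function) on both sides and the fact that the triangles have diameter exactly controlled by $\epsilon_n$. Handling the cone points carefully, so that short geodesics of $d_n$ either avoid the vertices or bend through them by an angle $\geq\pi$ on each side (again Lemma~\ref{pBIC} for $d_n$, whose cone angles are $\ge 2\pi$), is the technical heart of making this estimate uniform.
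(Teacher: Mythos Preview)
Your construction (fine geodesic triangulation, replace each triangle by its hyperbolic comparison triangle) and your cone-angle argument are exactly the paper's. One small correction: a vertex is an \emph{endpoint} of every incident edge, not an interior point of an edge path, so your justification for applying Lemma~\ref{pBIC} does not work as stated; the paper instead invokes the general CAT$(-1)$ fact \cite[II.5.12]{BH99} that every point of a CAT$(-1)$ space lies in the interior of some geodesic.

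The real gap is in the uniform convergence, and it is not merely a technical obstacle as you suggest---your outlined estimate actually fails. Subdividing a $d$-geodesic into $O(1/\epsilon_n)$ arcs of length $\lesssim\epsilon_n$ and replacing each by a path in a hyperbolic triangle of diameter $O(\epsilon_n)$ gives an error of order $\epsilon_n$ \emph{per arc}, hence total error $O(1)$, not $o(1)$. No amount of ``uniform CAT$(-1)$'' reasoning on both sides rescues this count: you need the error per triangle to be summable to something that goes to $0$ with $\epsilon_n$, and the diameters alone do not provide that.

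The paper's mechanism is different. One direction, $d\le \bar d_\epsilon+2\epsilon$, is immediate from CAT$(-1)$: the original distance between two boundary points of a triangle is at most the comparison distance. For the hard direction, the paper shows (Lemma~\ref{OAB}) that if a shortest path enters a triangle $T$ at $A$ and exits at $B$, then the hyperbolic comparison distance exceeds $d(A,B)$ by at most $-\delta_0(T)\sinh(\epsilon)$, where $\delta_0(T)$ is the (non-positive) angle excess of $T$. Since the triangles are convex relative to their boundary, a shortest path meets each triangle at most once, so summing over the triangles crossed and then over all triangles gives a total error $\le -\sinh(\epsilon)\sum_T\delta_0(T)$. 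A Gauss--Bonnet count (Lemma~\ref{sumsigma}) bounds $\sum_T\delta_0(T)\ge 2\pi\mathcal{X}(S)$, so the error is at most $-2\pi\mathcal{X}(S)\sinh(\epsilon)+2\epsilon\to 0$. The point is that the relevant budget is the total angle excess, controlled topologically, not the number of triangles crossed.
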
 

The remainder of this section is devoted to the proof of Theorem~\ref{approx}.

Applying  Theorem \ref{BIC} and Lemma \ref{CATT}, we obtain a triangulation $\mathcal{T}_\epsilon$ of our surface in which every simple triangle has  diameter $<\epsilon$. Replace the interiors of the triangles of $\mathcal{T}_\epsilon$ by the interiors of the hyperbolic comparison triangles.  We obtain  $(\bar{S}_\epsilon,\bar{d}_\epsilon)$ which is a hyperbolic metric with conical singularities, corresponding to the vertices of the triangles.  By construction, $\bar{S}_\epsilon$ is endowed with a triangulation $\bar{\mathcal{T}}_\epsilon$. 

\begin{lemma}
The total angles around the conical singularities of $\bar{d}_\epsilon$  are not less than $2\pi$.
\end{lemma}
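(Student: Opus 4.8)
The plan is to compare the total angle at a conical singularity $v$ of $\bar{d}_\epsilon$ with the corresponding angle of $(S,d)$ at the point $p$ that $v$ comes from, and to use the negative curvature hypothesis together with Lemma~\ref{pBIC}. Each vertex $v$ of the triangulation $\bar{\mathcal{T}}_\epsilon$ is surrounded by finitely many hyperbolic comparison triangles $\tilde{\Delta}_1,\dots,\tilde{\Delta}_k$, and the total angle at $v$ is the sum $\sum_i \tilde\beta_i$ of their angles at $v$. On the original surface $(S,d)$, the same triangles $\Delta_1,\dots,\Delta_k$ form a decomposition of a neighborhood of $p$ into sectors bounded by shortest paths (the sides of the triangulation issued from $p$), and these sectors are convex relative to their boundary since the $\Delta_i$ are simple triangles of the triangulation; let $\beta_i$ be the angle of $\Delta_i$ at $p$ in $(S,d)$.

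First I would invoke Lemma~\ref{CATT} so that $(S,d)$ is BIC and all the angles $\beta_i$ in fact exist (not merely as upper angles). Next, the key comparison: since $(S,d)$ has curvature $\leq -1$, it is locally CAT$(-1)$, so for each triangle the CAT$(-1)$ inequality gives that the angle $\beta_i$ at $p$ is \emph{not greater} than the angle $\tilde\beta_i$ at $\tilde v$ of its hyperbolic comparison triangle; that is, $\beta_i \leq \tilde\beta_i$ for every $i$. Summing over $i$ yields
\[
\sum_{i=1}^{k}\tilde\beta_i \;\geq\; \sum_{i=1}^{k}\beta_i.
\]
Then I would apply Lemma~\ref{pBIC} to the point $p$ on the BIC surface $(S,d)$: there is at least one shortest arc through $p$ in its interior (any side of the triangulation adjacent to $p$ works, or one can note every point of an Alexandrov surface lies in the interior of some shortest path), the sectors $\Delta_i$ are convex relative to the boundary with angles $\beta_i \leq \pi$, so Lemma~\ref{pBIC} gives $\sum_i \beta_i \geq 2\pi$. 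Combining, $\sum_i \tilde\beta_i \geq 2\pi$, which is exactly the assertion that the total angle of $\bar{d}_\epsilon$ around the conical singularity $v$ is at least $2\pi$.

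The main obstacle I anticipate is the hypothesis of Lemma~\ref{pBIC} that \emph{$p$ lies in the interior of some shortest arc} and that the sectors be \emph{convex relative to the boundary with angles not exceeding $\pi$}. For the first, one should argue that in a space of curvature $\leq -1$ every point is interior to a geodesic (extend a side of the triangulation slightly, or use that the triangulation can be chosen with this property), or alternatively handle separately any putative vertex that is an endpoint of every shortest arc through it — but such a point cannot arise from a genuine triangulation where edges cross $p$. For the angle bound $\beta_i \leq \pi$, one uses that the $\Delta_i$ are \emph{simple} triangles of small diameter in a CAT$(-1)$ space, where angles of small geodesic triangles are controlled by their hyperbolic comparison angles, which are $<\pi$; so for $\epsilon$ small this is automatic. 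The convexity relative to the boundary of each sector is inherited from the fact that $\Delta_i$ is a simple triangle of the triangulation $\mathcal{T}_\epsilon$ produced by Theorem~\ref{BIC}. Once these technical points are in place the computation is immediate.
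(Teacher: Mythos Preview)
Your proof is correct and follows essentially the same approach as the paper: first use the CAT$(-1)$ comparison to bound each $\beta_i \leq \tilde\beta_i$, then invoke Lemma~\ref{pBIC} to get $\sum_i \beta_i \geq 2\pi$. The paper dispatches the ``interior of a shortest arc'' hypothesis by citing local geodesic extension in CAT$(-1)$ spaces \cite[II.5.12]{BH99}, which is exactly your ``extend a side of the triangulation slightly'' option (your phrase ``edges cross $p$'' is a slip---edges end at $p$---but your first alternative already covers the point).
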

\begin{proof}
 By a property  of the  CAT$(-1)$ spaces, we have that every vertex of $\mathcal{T}_\epsilon$ lies in the interior of some geodesic in $(S,d)$ \cite[II.5.12]{BH99}. Applying Lemma \ref{pBIC} we immediately get that the sum of the sector angles $\alpha_i$ at any vertex $V$ of the triangulation $\mathcal{T}_\epsilon$ in $(S,d)$ is not less than $2\pi$. By definition of the CAT$(-1)$ spaces, we have that  the angles $\alpha_{-1,i}$ of the comparison triangles in the hyperbolic space are not less than the corresponding angles  at every vertex $V$ in the triangulation $\mathcal{T}_\epsilon$ in $(S,d)$.  It follows that $$2\pi\leq\sum_i\alpha_i\leq\sum_i\alpha_{-1,i}~.$$ 
\end{proof}

We want to prove that the finer the triangulation is, the closer  $\bar{d}_\epsilon$ is from $d$ (for the uniform convergence between metric spaces). This relies on a series of lemmas.

 \begin{lemma}\label{samepoint}
 Let $\alpha$ be the angle at a vertex of a  triangle  $T$ in a  surface of curvature $\leq -1$, and let $\alpha_{-1}$ be the corresponding angle in a comparison triangle $T_{-1}$ in the hyperbolic space then,
  $$\alpha_{-1}-\alpha\leq -area(T_{-1})-\delta_0(T).$$
 \end{lemma}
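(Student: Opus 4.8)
The plan is to compare the excess of the triangle $T$ in $(S,d)$ with the excess of the comparison triangle $T_{-1}$ in $\mathbb{H}^2$, and then apply Gauss--Bonnet in the hyperbolic plane. Write $\alpha,\beta,\gamma$ for the angles of $T$ and $\alpha_{-1},\beta_{-1},\gamma_{-1}$ for the corresponding angles of $T_{-1}$. Since $T_{-1}$ is a genuine hyperbolic triangle, the Gauss--Bonnet theorem gives exactly $\alpha_{-1}+\beta_{-1}+\gamma_{-1}-\pi=-\mathrm{area}(T_{-1})$, i.e. the hyperbolic excess is the negative of the area. On the other hand, the excess of $T$ in $(S,d)$ is by definition $\delta_0(T)=\alpha+\beta+\gamma-\pi$. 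Subtracting these two identities yields
$$(\alpha_{-1}-\alpha)+(\beta_{-1}-\beta)+(\gamma_{-1}-\gamma)=-\mathrm{area}(T_{-1})-\delta_0(T).$$

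The second, and really the only substantive, step is the monotonicity of angles under the CAT$(-1)$ comparison: because $(S,d)$ has curvature $\leq -1$ (hence is locally CAT$(-1)$, and on a small enough triangle the comparison applies), each angle of $T$ is \emph{at most} the corresponding comparison angle, so $\beta_{-1}-\beta\geq 0$ and $\gamma_{-1}-\gamma\geq 0$. One should note here that $T$ is a \emph{simple} triangle of small diameter obtained from the triangulation, so it lies inside a CAT$(-1)$ ball and the angle comparison is legitimate; this is where Lemma~\ref{CATT} and the fact that for a BIC surface genuine angles (not just upper angles) exist are used. Dropping the two nonnegative terms $\beta_{-1}-\beta$ and $\gamma_{-1}-\gamma$ from the left-hand side of the displayed identity can only decrease it, so
$$\alpha_{-1}-\alpha\leq (\alpha_{-1}-\alpha)+(\beta_{-1}-\beta)+(\gamma_{-1}-\gamma)=-\mathrm{area}(T_{-1})-\delta_0(T),$$
which is exactly the claimed inequality.

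The main obstacle is not any computation but making sure the hypotheses line up: one must be certain that on the triangle $T$ in question the CAT$(-1)$ angle comparison genuinely applies (diameter smaller than the CAT$(-1)$ radius around each vertex, triangle with distinct vertices and honest shortest sides), and that $\delta_0(T)$ in the statement is indeed $\alpha+\beta+\gamma-\pi$ with the true angles, consistently with the BIC definition used earlier. Both are guaranteed by the setup: the triangulation $\mathcal{T}_\epsilon$ is taken with mesh $<\epsilon$ small, curvature $\leq-1$ gives local CAT$(-1)$ by definition, and Lemma~\ref{CATT} together with the remark following it ensures angles exist so that $\delta_0(T)$ is well defined as a genuine excess. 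Once these are in place, the proof is just Gauss--Bonnet on $T_{-1}$ plus the angle comparison, as above.
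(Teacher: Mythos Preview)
Your proof is correct and follows essentially the same route as the paper: both add the nonnegative differences $\beta_{-1}-\beta$ and $\gamma_{-1}-\gamma$ (nonnegative by the CAT$(-1)$ angle comparison) to $\alpha_{-1}-\alpha$, and then identify the resulting sum as $\delta_0(T_{-1})-\delta_0(T)=-\mathrm{area}(T_{-1})-\delta_0(T)$ via Gauss--Bonnet in $\mathbb{H}^2$. The paper's version is simply more terse and leaves the CAT$(-1)$ justification implicit.
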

 \begin{proof}
 If $\beta$ and $\lambda$ are the other angles of $T$ and $\beta_{-1}$, $\lambda_{-1}$ the corresponding angles in $T_{-1}$, then we have   $$\alpha_{-1}-\alpha\leq \alpha_{-1}-\alpha+\beta_{-1}-\beta+\lambda_{-1}-\lambda=\delta_0(T_{-1})-\delta_0(T)=-area(T_{-1})-\delta_0(T).$$
 \end{proof}
 \begin{lemma}\label{sumsigma}
 If $\mathcal{T}$ is a triangulation of  a compact surface $(S,d)$ with curvature $\leq -1$  by non overlapping simple triangles, then 
 $$\sum\limits_{T\in\mathcal{T}}\delta_0(T)\geq 2\pi\mathcal{X}(S),$$
 with $\mathcal{X}(S)$ the Euler characteristic of $S$. 
 \end{lemma}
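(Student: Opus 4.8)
The plan is to combine the local excess estimate from Lemma~\ref{samepoint} with a global angle-counting argument (a combinatorial Gauss--Bonnet identity) on the hyperbolic cone surface $\bar S_\epsilon$. Let $\mathcal{T}=\mathcal{T}_\epsilon$ have $V$ vertices, $E$ edges and $F$ faces, so that $V-E+F=\mathcal{X}(S)$ and, since every triangle has three sides each shared by two triangles, $3F=2E$. For each triangle $T$, write $\delta_0(T)=\bar\alpha_T+\bar\beta_T+\bar\gamma_T-\pi$ using the (upper) angles of $T$ measured in $(S,d)$.

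The key step is to sum these excesses over all $T\in\mathcal{T}$ and regroup the angle terms by vertex. First I would observe that the total over all triangles of all their angles is
$$
\sum_{T\in\mathcal{T}}(\bar\alpha_T+\bar\beta_T+\bar\gamma_T)=\sum_{V\in\mathcal{T}^{(0)}}\theta(V),
$$
where $\theta(V)$ denotes the sum of the sector angles (in $(S,d)$) of all triangles incident to $V$. By the argument already used above --- every vertex of $\mathcal{T}_\epsilon$ lies in the interior of some geodesic of $(S,d)$ by \cite[II.5.12]{BH99}, so Lemma~\ref{pBIC} applies --- we have $\theta(V)\geq 2\pi$ for each vertex $V$. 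Therefore
$$
\sum_{T\in\mathcal{T}}\delta_0(T)=\sum_{V}\theta(V)-\pi F\geq 2\pi V-\pi F.
$$
Now substitute $F=2E/3$, equivalently $\pi F=\pi(3F-2E)+\pi(\text{stuff})$; more directly, using $3F=2E$ we get $E=3F/2$, hence $2\pi V-\pi F = 2\pi(V-E+F) + 2\pi E - 2\pi F - \pi F = 2\pi\mathcal{X}(S) + 3\pi F - 3\pi F = 2\pi\mathcal{X}(S)$, using $2\pi E = 3\pi F$. Thus $\sum_{T}\delta_0(T)\geq 2\pi\mathcal{X}(S)$, as claimed.

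The only real obstacle is the bookkeeping: one must be careful that the angle attached to a triangle $T$ at a vertex $V$ in the defining formula for $\delta_0(T)$ is exactly the sector angle of $T$ at $V$ in $(S,d)$ (which is legitimate since on a BIC surface angles exist, as noted after Lemma~\ref{CATT}), and that no vertex is double-counted when passing from the sum over faces to the sum over vertices. One should also note that $\mathcal{X}(S)<0$ since $S$ has genus $>1$, so the inequality is not vacuous but gives a genuine (negative) lower bound on the total excess; this is exactly what is needed in the sequel to control how far $\bar d_\epsilon$ can be from $d$. No extra hypothesis beyond ``$\mathcal{T}$ triangulates the closed surface by non-overlapping simple triangles'' is used, so the statement follows.
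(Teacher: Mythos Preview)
Your argument is correct and is essentially the same as the paper's: sum the excesses, regroup the angles by vertex, use $\theta(V)\geq 2\pi$ (via Lemma~\ref{pBIC}), and combine the Euler formula with $3F=2E$ to obtain $2\pi V-\pi F=2\pi\mathcal{X}(S)$. The paper derives the combinatorial identity slightly more cleanly as $2|N|-|T|=2\mathcal{X}(S)$, and your reference to Lemma~\ref{samepoint} in the plan is unnecessary here, but these are cosmetic differences only.
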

 \begin{proof}
 Let $|T|$ be the number of triangles, $|E|$ the number of edges  and $|N|$ the number of vertices  in our geodesic triangulation. We have $|E|=\frac{3}{2}|T|$, so that the Euler formula   $$|T|-|E|+|N|=\mathcal{X}(S)$$ implies 
 \begin{equation}
 2|N|-|T|=2\mathcal{X}(S).\label{vertex}
  \end{equation}
 If we denote by $\theta_i$ the sum of the angles of the triangles around a vertex, then using \eqref{vertex} it follows that 
 
 $$\sum\limits_{T\in\mathcal{T}}\delta_0(T)=\sum\limits_{i=1}^{N}\theta_i-|T|\pi=\sum\limits_{i=1}^{N}(\theta_i-2\pi)+2\pi\mathcal{X}(S)$$
 
 The proof follows because $\theta_i-2\pi\geq 0$ for all $i$.
 \end{proof}

\begin{lemma}\label{fkepsi}
Let $T$ be an isosceles triangle in the hyperbolic space with diameter less than a given $ \epsilon$ and with edges length $x,x,l$ and $\theta$ the angle opposite to the edge of length $l$, then 
$$l\leq \sinh(\epsilon)\theta.$$ 
\end{lemma}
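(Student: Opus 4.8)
The plan is to prove the elementary hyperbolic inequality $l\le\sinh(\epsilon)\,\theta$ for an isosceles triangle with two sides of length $x$ and base $l$, where $\theta$ is the apex angle and the diameter (so in particular $x$) is less than $\epsilon$. First I would write down the hyperbolic law of cosines for the base: $\ch l=\ch^2 x-\sh^2 x\cos\theta$, which can be rewritten as $\ch l-1=\sh^2 x\,(1-\cos\theta)$, i.e. $2\sinh^2(l/2)=\sh^2 x\cdot 2\sin^2(\theta/2)$, hence $\sinh(l/2)=\sh x\,\sin(\theta/2)$ (all quantities being nonnegative, so we may take positive square roots). This is the clean identity the rest of the estimate rests on.

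From $\sinh(l/2)=\sh x\,\sin(\theta/2)$ I would bound each side in the elementary direction. On the left, since $\sinh$ is convex and increasing on $[0,\infty)$ with $\sinh(0)=0$, we have $\sinh(l/2)\ge \tfrac12\sinh l$ is the wrong direction; instead I use $\sinh(l/2)\ge l/2$ (since $\sinh t\ge t$ for $t\ge0$), giving $l/2\le \sh x\,\sin(\theta/2)$. On the right, I use $\sin(\theta/2)\le \theta/2$ and $\sh x\le \sh\epsilon$ (because $x<\epsilon$ and $\sinh$ is increasing), obtaining $l/2\le \sh\epsilon\cdot(\theta/2)$, that is $l\le \sinh(\epsilon)\,\theta$, as desired. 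One should note that $x<\epsilon$ follows from the diameter hypothesis since $x$ is an edge length, and that $\theta\in[0,\pi]$ so $\theta/2\in[0,\pi/2]$ legitimizes $\sin(\theta/2)\le\theta/2$.

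There is really no substantive obstacle here: the only points requiring a moment's care are deriving the half-angle identity $\sinh(l/2)=\sh x\,\sin(\theta/2)$ from the hyperbolic law of cosines (using the double-angle formulas $\ch l-1=2\sinh^2(l/2)$ and $1-\cos\theta=2\sin^2(\theta/2)$), and checking that the diameter bound indeed forces the equal sides to have length $<\epsilon$ so that the monotonicity step $\sh x\le\sh\epsilon$ applies. If one prefers to avoid the law of cosines one could instead drop a perpendicular from the apex to the base, splitting $T$ into two congruent right triangles with hypotenuse $x$, half-base $l/2$, and opposite angle $\theta/2$, and invoke the right-triangle relation $\sinh(l/2)=\sh x\,\sin(\theta/2)$ directly; either route leads to the same two-line estimate.
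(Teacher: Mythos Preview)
Your proof is correct and follows essentially the same route as the paper: both start from the hyperbolic law of cosines, reduce via the half-angle identities to $\sinh(l/2)=\sinh x\,\sin(\theta/2)$, and then apply $l/2\le\sinh(l/2)$, $\sin(\theta/2)\le\theta/2$, and $\sinh x\le\sinh\epsilon$ to conclude. The only difference is cosmetic (you spell out the intermediate step $\cosh l-1=\sinh^2 x\,(1-\cos\theta)$ and remark on the right-triangle alternative).
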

\begin{proof}
By the hyperbolic cosine law, 
$$\cosh(l)=\cosh^2(x)-\sinh^2(x)\cos(\theta),$$
that is equivalent to 
$$1+2\sinh^2(\frac{l}{2})=\cosh^2(x)-\sinh^2(x)(1-2\sin^2(\frac{\theta}{2})),$$

so 

$$\frac{l}{2}\leq\sinh(\frac{l}{2})=\sinh(x)\sin(\frac{\theta}{2})\leq\sinh(\epsilon)\frac{\theta}{2}.$$

\end{proof}

\begin{lemma}\label{OAB} Let $\epsilon>0$.
Let $T$ be a simple triangle in $(S,d)$ of diameter  $<\epsilon$ with vertices $OXY$. Let $A$ (resp. $B$) be on the edge $OX$ (resp. $OY$) and at distance $a$ (resp. $b$) from $O$. Let $T_{-1}^1$ be a comparison triangle  for $T$ in the hyperbolic space, with vertices $O'X'Y'$. Let $A'$ (resp. $B'$) be the corresponding point of $A$ (resp. $B$) (i.e on  the edge $O'X'$  (resp. $O'Y'$) and at distance $a$ (resp. $b$) from $O'$), then 
$$0\leq d_{\mathbb{H}^2}(A',B')-d(A,B)\leq -\delta_0(T)\sinh(\epsilon).$$ 
\end{lemma}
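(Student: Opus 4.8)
The plan is to prove the two inequalities separately, each being a local comparison statement about the triangle $OXY$ and its subtriangles obtained by cutting along $AB$. First I would set up the triangle $OAB\subset T$, which is again a simple triangle of diameter $<\epsilon$ in $(S,d)$, and compare it with \emph{its own} hyperbolic comparison triangle $\widetilde{O}\widetilde{A}\widetilde{B}$ (not to be confused with $A'B'$, which lives in the comparison triangle $T^1_{-1}$ for the whole of $T$). The lower bound $d_{\mathbb{H}^2}(A',B')\geq d(A,B)$ is the easy half: since $(S,d)$ has curvature $\le -1$, the CAT$(-1)$ comparison for the triangle $OXY$ gives that the angle $\angle_O(A,B)$ of $T$ at $O$ is at most the corresponding angle $\angle_{O'}(A',B')$ of the comparison triangle $T^1_{-1}$; then applying the hyperbolic cosine rule (or the hyperbolic law of cosines / monotonicity of the side opposite an angle, with the two fixed sides of length $a$ and $b$) in the hyperbolic plane, $d_{\mathbb{H}^2}(A',B')$ is a monotone increasing function of that angle, so it dominates $d_{\mathbb{H}^2}(\widetilde{A},\widetilde{B})$, and the latter is in turn $\ge d(A,B)$ again by the CAT$(-1)$ property applied to the subtriangle $OAB$ (comparison angle at $O$ bounds the actual angle, hence the comparison side bounds the actual side $d(A,B)$). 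This also uses that shortest paths in a CAT$(-1)$ space are unique and that $A,B$ do lie on the geodesics realizing the angle at $O$.

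For the upper bound, the idea is to run the same chain of inequalities but keep track of the defect. By Lemma~\ref{samepoint} applied to the subtriangle $OAB$, the difference between the hyperbolic comparison angle $\widetilde{\angle}_O(A,B)$ and the true angle $\angle_O(A,B)$ is at most $-\mathrm{area}(\widetilde{O}\widetilde{A}\widetilde{B})-\delta_0(OAB)\le -\delta_0(OAB)$; and since the angle at $O$ of the big comparison triangle $T^1_{-1}$ equals the true angle $\angle_O(X,Y)=\angle_O(A,B)$ of $T$ (the comparison triangle for $T$ has the \emph{same} angle at $O'$ as the comparison angle of $T$, which by curvature $\le -1$ is at least the true one — here I must be careful: $\angle_{O'}(A',B')$ is the \emph{comparison} angle of $T$, so it differs from the true $\angle_O(A,B)$ by at most $-\delta_0(T)$ via Lemma~\ref{samepoint} applied to $T$ itself). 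Combining, $\angle_{O'}(A',B') - \angle_O(A,B) \le -\delta_0(T)$. Now $d_{\mathbb{H}^2}(A',B')$ and $d_{\mathbb{H}^2}(\widetilde{A},\widetilde{B})$ are both values of the hyperbolic cosine-rule function $\ell(\varphi)$ with the same two sides $a,b$ evaluated at angles differing by at most $-\delta_0(T)$; since $\ell'(\varphi)=\dfrac{\sinh a\,\sinh b\,\sin\varphi}{\sinh\ell}$ and $a,b<\epsilon$, $\varphi<\pi$, one bounds $\ell'(\varphi)\le \sinh\epsilon$ (this is exactly the computation packaged in Lemma~\ref{fkepsi}, with the isosceles/general case handled by the same cosine-rule estimate). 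Hence $d_{\mathbb{H}^2}(A',B')-d_{\mathbb{H}^2}(\widetilde{A},\widetilde{B})\le -\delta_0(T)\sinh\epsilon$, and since $d_{\mathbb{H}^2}(\widetilde{A},\widetilde{B})\ge d(A,B)\ge d_{\mathbb{H}^2}(A',B') - (\text{something})$... more cleanly: $d_{\mathbb{H}^2}(\widetilde A,\widetilde B)\le d(A,B)$ is false in general, so the correct route is $d_{\mathbb{H}^2}(A',B')-d(A,B)\le d_{\mathbb{H}^2}(A',B')-d_{\mathbb{H}^2}(\widetilde A,\widetilde B) + (d_{\mathbb{H}^2}(\widetilde A,\widetilde B)-d(A,B))$, bound the first term by $-\delta_0(T)\sinh\epsilon$ using the angle estimate and the Lipschitz bound on $\ell$, and bound the second term by $-\delta_0(OAB)\sinh\epsilon\cdot(\text{monotonicity})\le 0$...

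Let me state the clean version: I would prove directly that $d_{\mathbb{H}^2}(A',B') - d(A,B) \le -\delta_0(T)\sinh\epsilon$ by the single chain $d(A,B)\le d_{\mathbb{H}^2}(\widetilde A,\widetilde B)$ (CAT$(-1)$ on $OAB$) and $d_{\mathbb{H}^2}(A',B') - d_{\mathbb{H}^2}(\widetilde A,\widetilde B)\le \sinh\epsilon\,(\angle_{O'}(A',B')-\widetilde\angle_O(A,B))$ (Lipschitz bound on the cosine-rule side length, sides $\le\epsilon$), together with $\angle_{O'}(A',B')-\widetilde\angle_O(A,B)\le \angle_{O'}(A',B')-\angle_O(A,B)\le -\delta_0(T)$ (Lemma~\ref{samepoint} for $T$, plus $\widetilde\angle_O(A,B)\ge\angle_O(A,B)$ from CAT$(-1)$ on $OAB$). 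The nonnegativity $\delta_0(T)\le 0$ ensures the right-hand side is indeed $\ge 0$-bounded... no: $-\delta_0(T)\sinh\epsilon\ge 0$ since $\delta_0(T)\le 0$ on a curvature $\le -1$ surface, consistent with the stated inequality.

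\textbf{Main obstacle.} The delicate point is bookkeeping the distinction between the two hyperbolic triangles attached to the picture — the comparison triangle $T^1_{-1}$ of the \emph{whole} triangle $T$ (in which $A',B'$ are defined by arc-length) versus the comparison triangle of the \emph{sub}triangle $OAB$ — and making sure every angle comparison is applied to a genuine geodesic triangle with distinct vertices so that the CAT$(-1)$ hypothesis and Lemmas~\ref{samepoint}, \ref{fkepsi} legitimately apply; in particular one needs that $A,B$ are interior points of the sides (the cases $A=O$ or $B=O$ being trivial) and that the relevant comparison angle functions are monotone in the enclosed angle, which is where the hyperbolic cosine rule and the derivative bound $\ell'\le\sinh\epsilon$ do the real work. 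The rest is routine trigonometry already isolated in Lemma~\ref{fkepsi}.
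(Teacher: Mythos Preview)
Your final ``clean version'' is correct and is essentially the paper's argument: both reduce the upper bound to the angle estimate $\theta_1-\theta_2\le -\delta_0(T)$ (your $\angle_{O'}(A',B')-\widetilde\angle_O(A,B)$), obtained exactly as you do via Lemma~\ref{samepoint} on $T$ together with $\widetilde\angle_O(A,B)\ge\angle_O(A,B)$. The only technical difference is in converting the angle gap into a side-length gap: you invoke a Lipschitz/mean-value bound $\ell'(\varphi)\le\sinh\epsilon$ for the general (non-isosceles) cosine rule, whereas the paper places the comparison triangle for $OAB$ so that $O'A'$ is shared, calls the new vertex $B''$, uses the triangle inequality $d_{\mathbb H^2}(A',B')-d_{\mathbb H^2}(A',B'')\le d_{\mathbb H^2}(B',B'')$, and then applies Lemma~\ref{fkepsi} to the \emph{isosceles} triangle $O'B'B''$ --- so Lemma~\ref{fkepsi} is used exactly as stated, with no need for a general-triangle version.
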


\begin{proof} 
The first inequality comes from  the fact that we are in a CAT$(-1)$ neighborhood (\cite{BH99}, page 158). Let $T^2_{-1}$ be the comparison triangle for $OAB$ in the hyperbolic space drawn such that the edge of length $a$ is identified with $O'A'$ (see Figure~\ref{Trianglo}). Let $B''$ be the  corresponding comparison point  for $B$ in $T^2_{-1}$ (i.e. $B''$ satisfies  $d(A,B)=d_{\mathbb{H}^2}(A',B'')$ and $d(O,B)=d_{\mathbb{H}^2}(O',B'')$). By the triangle inequality we have 
\begin{equation}
d_{\mathbb{H}^2}(A',B')-d(A,B)=d_{\mathbb{H}^2}(A',B')-d_{\mathbb{H}^2}(A',B'')\leq d_{\mathbb{H}^2}(B',B'').\label{ineq}
\end{equation}

Let $\theta_1$ be the angle at $O'$ of $T^1_{-1}$ (i.e. the angle at $O'$ of $O'A'B'$), and let $\theta_2$ be the angle at $O'$ of $T^2_{-1}$ (i.e. $O'A'B''$).


  We have  $\theta_1-\theta_2$ is the angle at $O'$ of $O'B'B''$ which is isosceles  so by inequality \eqref{ineq} and  Lemma \ref{fkepsi} it follows that 
$$d_{\mathbb{H}^2}(A',B')-d(A,B)\leq \sinh(\epsilon)(\theta_1-\theta_2).$$ 
 If $\beta$ is the angle of $T$ at $O$, then both $\theta_1$ and $\theta_2$ are angles corresponding to $\beta$ in the different comparison triangles, so by Lemma \ref{samepoint} 
 $$\theta_1-\theta_2=\theta_1-\beta+\beta-\theta_2\leq \theta_1-\beta\leq -area(T_{-1}^1)-\delta_0(T)$$
 that leads to the result.
 \end{proof}
 \begin{center}
  \begin{figure}[H] \begin{center}
\includegraphics[scale=0.9]{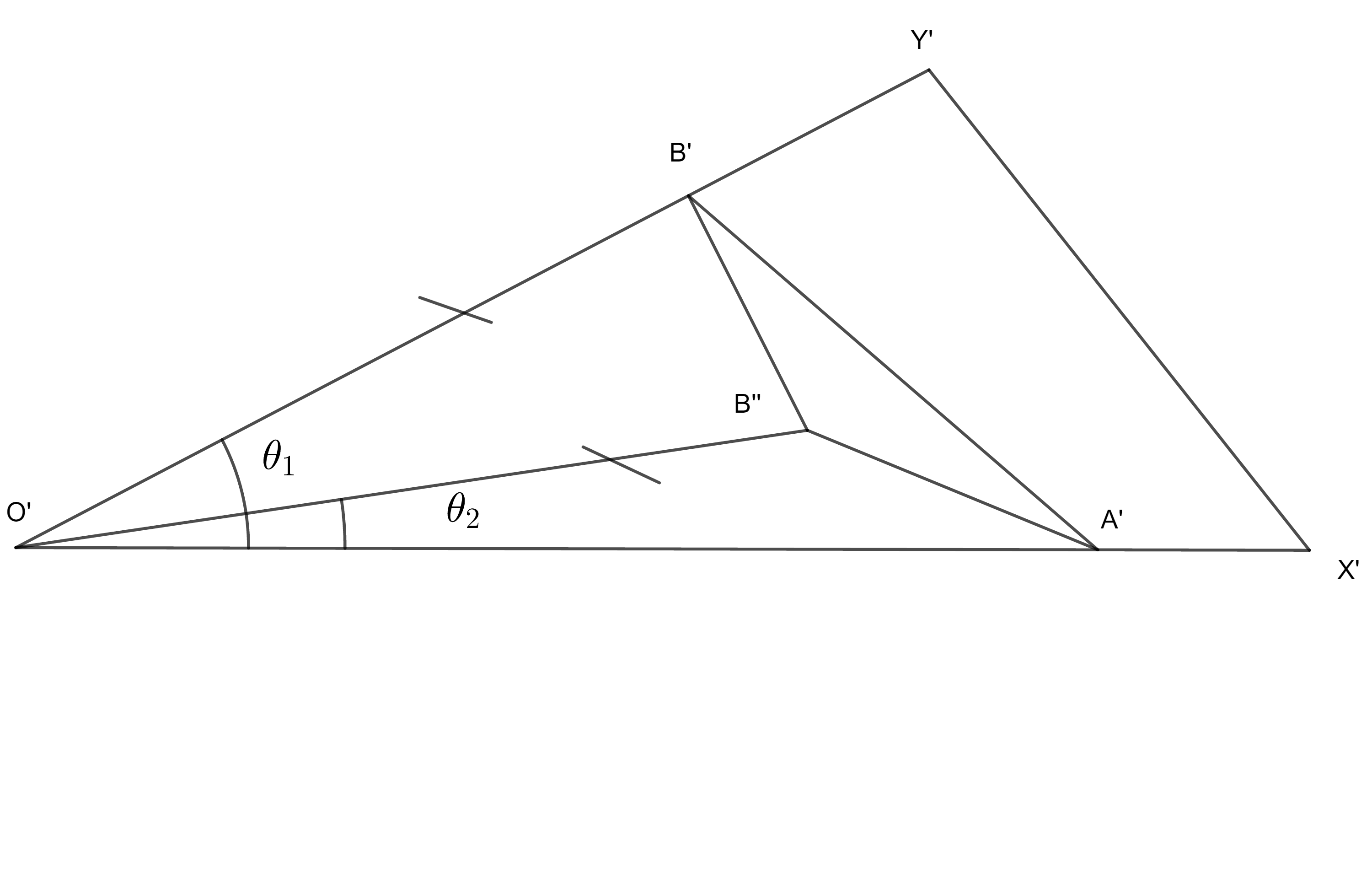}
 \caption{Notations for the proof of Lemma \ref{OAB}.}\label{Trianglo}\end{center}
  \end{figure}
\end{center}

Now, let's describe a homeomorphism between $(S,d)$ and $(\bar{S}_\epsilon,\bar{d}_\epsilon)$ in the following way. 
The triangle $\bar{T}_i$  do not degenerate into segments, since the sum of every two sides is greater than the third. Therefore, the triangles $T_i$ can be mapped homeomorphically onto the corresponding triangles $\bar{T}_i$, such that the vertices are sent to vertices and the homeomorphism restricts to an isometry along the edges.  We consider any homeomorphism from the interior of the triangles that extend the homeomorphism on the boundary.
As the surfaces are triangulated  by such triangles, this gives a homeomorphism from $S$ to $\bar{S}_\epsilon$.

For two points $H$ and $J$ on $S$, we denote by $H'$, $J'$ the correponding points on  $\bar{S}_\epsilon$.

\begin{fact}\label{4.11} With the notations above,
$-2 \epsilon \leq \bar{d}_\epsilon(H',J')-d(H,J)\leq 2\epsilon-2\pi\mathcal{X}(S)\sinh(\epsilon)$. 
\end{fact}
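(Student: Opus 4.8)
\textbf{Proof plan for Fact~\ref{4.11}.}
The plan is to compare the distance between $H$ and $J$ in $(S,d)$ with the distance between the images $H',J'$ in $(\bar S_\epsilon,\bar d_\epsilon)$ by chaining together the local comparison estimates of Lemma~\ref{OAB} along a geodesic, and then to convert the sum of excesses into the global bound $-2\pi\mathcal X(S)\sinh(\epsilon)$ via Lemma~\ref{sumsigma}. The extra additive $2\epsilon$ terms will come from the ``rounding'' needed because $H$ and $J$ need not be vertices of the triangulation: one first moves from $H$ (resp.\ $J$) to a nearby vertex of $\mathcal T_\epsilon$, which costs at most $\epsilon$ on each side, and the images of these vertices agree with the comparison points since the homeomorphism is an isometry along edges.

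First I would take a shortest path $\gamma$ in $(S,d)$ from $H$ to $J$. After the rounding step, it is enough to estimate $\bar d_\epsilon(V_0',V_k') - d(V_0,V_k)$ where $V_0,\dots,V_k$ are the successive vertices of $\mathcal T_\epsilon$ crossed by $\gamma$ (or, more carefully, the points where $\gamma$ enters and leaves the triangles it meets). On each triangle $T_i$ that $\gamma$ traverses, apply Lemma~\ref{OAB} with $O$ a vertex of $T_i$ and $A,B$ the entry/exit points of $\gamma$ on the two edges through $O$: this gives
$$0\le d_{\mathbb H^2}(A_i',B_i') - d(A_i,B_i)\le -\delta_0(T_i)\sinh(\epsilon).$$
Summing over the triangles met by $\gamma$ and using the triangle inequality in $(\bar S_\epsilon,\bar d_\epsilon)$ (together with the fact that the pieces of $\gamma$ inside each triangle, being shortest in a CAT$(-1)$ ball, map to curves realizing $d(A_i,B_i)$) yields
$$\bar d_\epsilon(V_0',V_k') - d(V_0,V_k)\le -\sinh(\epsilon)\sum_{T_i\ \text{met by}\ \gamma}\delta_0(T_i)\le -\sinh(\epsilon)\sum_{T\in\mathcal T_\epsilon}\delta_0(T),$$
the last inequality because every excess $\delta_0(T)=-\mathrm{area}(T_{-1})\le 0$, so enlarging the index set only decreases the sum. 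By Lemma~\ref{sumsigma} the full sum is $\ge 2\pi\mathcal X(S)$ (note $\mathcal X(S)<0$), hence $-\sinh(\epsilon)\sum_{T}\delta_0(T)\le -2\pi\mathcal X(S)\sinh(\epsilon)$. For the lower bound, the first inequality of Lemma~\ref{OAB} gives $\bar d_\epsilon(V_0',V_k')\ge d(V_0,V_k)$ piece by piece along $\gamma$; one also needs the reverse comparison, i.e.\ that a shortest path in $(\bar S_\epsilon,\bar d_\epsilon)$ from $V_0'$ to $V_k'$ pulls back to a curve in $(S,d)$ whose length is at least its $\bar d_\epsilon$-length minus a controlled error, which again follows from the CAT$(-1)$ comparison inside each triangle (distances can only shrink when passing to the hyperbolic comparison triangle). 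Re-adding the rounding errors $\pm\epsilon$ on each of the two endpoints gives the stated $-2\epsilon\le \bar d_\epsilon(H',J')-d(H,J)\le 2\epsilon - 2\pi\mathcal X(S)\sinh(\epsilon)$.

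The main obstacle I expect is the bookkeeping at the endpoints and along the path: one must be careful that applying Lemma~\ref{OAB} repeatedly along $\gamma$ is legitimate, i.e.\ that the union of comparison triangles glued along the edges crossed by $\gamma$ really produces, by the triangle inequality in $\bar S_\epsilon$, an upper bound for $\bar d_\epsilon(H',J')$, and that one can choose the auxiliary vertex $O$ in each triangle so that the entry and exit points of $\gamma$ lie on the two edges through $O$ (splitting a triangle into at most two sub-triangles if $\gamma$ enters and exits through the same pair of edges but not through a vertex). The lower-bound direction is slightly more delicate because it requires going from $\bar S_\epsilon$ back to $S$, but since each hyperbolic comparison triangle has the \emph{same} edge lengths as the original and sits inside a CAT$(-1)$ region, no new short-cuts are created, so the estimate is in fact cleaner than the upper one. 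None of this requires new ideas beyond Lemmas~\ref{samepoint}, \ref{sumsigma}, \ref{fkepsi} and \ref{OAB}; it is the careful concatenation that needs attention.
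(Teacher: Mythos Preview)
Your approach is essentially the paper's: for the upper bound take a shortest path $\gamma$ in $(S,d)$, apply Lemma~\ref{OAB} on each triangle it traverses, sum, and invoke Lemma~\ref{sumsigma}; for the lower bound take a shortest path in $(\bar S_\epsilon,\bar d_\epsilon)$ and use the CAT$(-1)$ contraction piecewise. Your endpoint ``rounding'' is just another way of saying what the paper does, namely bounding the lengths of the first and last pieces $\gamma_0,\gamma_{m+1}$ (and their images) by $\epsilon$ via the diameter hypothesis.

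There is, however, one genuine gap. Your passage
\[
-\sinh(\epsilon)\sum_{T_i\ \text{met by}\ \gamma}\delta_0(T_i)\ \le\ -\sinh(\epsilon)\sum_{T\in\mathcal T_\epsilon}\delta_0(T)
\]
is only valid if each triangle contributes \emph{at most once} to the left-hand sum. If $\gamma$ crossed some triangle $T$ several times, you would apply Lemma~\ref{OAB} once per crossing and the term $-\delta_0(T)\ge 0$ would be counted with multiplicity, so ``enlarging the index set'' no longer helps. The paper closes this gap by using that the triangles produced by Theorem~\ref{BIC} are \emph{convex relative to their boundary}: if a shortest path $\gamma$ left a triangle and re-entered it, the two boundary points where it exits and re-enters would be joined by a shorter curve outside the triangle, contradicting relative convexity. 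You should insert this argument.

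A minor correction: you write $\delta_0(T)=-\mathrm{area}(T_{-1})$, but that equality holds for the \emph{comparison} triangle $T_{-1}$, not for $T$. What you actually need (and what the CAT$(-1)$ hypothesis gives) is $\delta_0(T)\le \delta_0(T_{-1})=-\mathrm{area}(T_{-1})\le 0$, which suffices for your sign argument.
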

\begin{proof}
 The idea of this proof  is the same as  \cite[Lemma 2, page 263]{AL06}.  Let's prove the first inequality. Let  $H',J'\in  \bar{S}_\epsilon $ and $\gamma'$ a shortest path  joining $H'$ and $J'$   and  $\gamma$ be a path joining $H$ and $J$  such that the intersection with every triangle $T$ is a shortest path  (i.e. each connected piece of  $\gamma'$ meeting a triangle $T'$ from a point $A'$ to a point $B'$ on the boundary of $T'$ is associated in $T$ the shortest path joining the corresponding (in the sense of Lemma \ref{OAB}) points  $A$  and $B$).

      Let us denote by $\gamma'_i,~i=0,\ldots, m+1$ the decomposition of $\gamma'$ given by the triangles  it crosses, and by $l(\gamma'_i)$ their lengths.

 As $(S,d)$ is CAT$(-1)$, the length of a connected component of the intersection of $\gamma'$ with $T'$ joining two points of the boundary is greater than the length of the corresponding component of $\gamma$ in $T$ (\cite{BH99},  page 158). Now, because the diameters are not greater than $\epsilon$ then   $l(\gamma_0)+l(\gamma_{m+1})\leq 2\epsilon$  and $l(\gamma'_0)+l(\gamma'_{m+1})\leq 2\epsilon$.  It follows that 
\begin{equation*}
d(H,J)\leq \sum\limits_{i=1}^{m}l(\gamma_i)+2 \epsilon\leq\sum\limits_{i=1}^{m}l(\gamma'_i)+2 \epsilon\leq  \bar{d}_\epsilon(H',J')+ 2\epsilon \end{equation*}
 then $$-2 \epsilon \leq \bar{d}_\epsilon(H',J')-d(H,J).$$

  The first inequality is now proved. 

Let's  now prove  the second  inequality. For that, consider  a shortest path $\gamma$ joining $H$ and $J$ in $S$ and  $\gamma'$ be a path in $\bar{S}_\epsilon$ joining $H'$ and $J'$  such that the intersection with every triangle $T'$ is a shortest path  (i.e. each connected piece of  $\gamma$ meeting a triangle $T$ from a point $A$ to a point $B$ on the boundary of $T$ is associated in $T'$ the shortest path joining the corresponding (in the sense of Lemma \ref{OAB}) points  $A'$  and $B'$).



  Let us denote by $\gamma_i,~i=0,\ldots, m+1$ the decomposition of $ \gamma$ given by the triangles  it crosses, and by $l(\gamma_i)$ their lengths, we find

 $$\bar{d}_\epsilon(H',J')-d(H,J)\leq l(\gamma_0')+l(\gamma_{m+1}')+\sum\limits_{i=1}^{m}l(\gamma_i')-l(\gamma_i).$$

Since $l(\gamma_0')$ and $l(\gamma_{m+1}')$ are not greater  than $\epsilon$   then    $ l(\gamma_0')+l(\gamma'_{m+1})\leq2\epsilon$.    By Lemma \ref{OAB} it follows that  $$\bar{d}_\epsilon(H',J')-d(H,J)\leq 2\epsilon-\sum\limits_{i=1}^{m}\delta_0(T_i)\sinh(\epsilon),$$

 But $\delta_0(T_i)$ are non positive and moreover the triangles $T$ are relative  convex, so $\gamma$ meets each triangle at most once (because, if the shortest path  $\gamma$ meets the (geodesic) triangle more than once, then there will be two points on the boundary of the triangle joined by a shortest path lying outside of the triangle, that contradicts the fact that the triangles are convex relative to the boundary), so $-\sum\limits_{i=1}^{m}\delta_0(T_i)$ is less than $-\sum_T\delta_0(T)$ for all the triangles of the triangulation of $S$, which is less than $-2\mathcal{X}(S)$ by Lemma \ref{sumsigma}. The second inequality is now proved. This fact is now proved.\\
\end{proof}

The lemmas above imply the uniform convergence. Theorem  \ref{approx} is now proved. 
 
\subsection{Approximation of polyhedral metrics by smooth metrics  }

\begin{proposition}\label{smoothappr}
Let $d$ be the metric induced by  a  
hyperbolic metric with conical  singularities of negative curvature on the closed surface $S$.
Then there exists a sequence $(S_n,d_n)$  converging uniformly to $(S,d)$, where $S_n$ is homeomorphic to $S$, $d_n$ is metric induced by a Riemannian metric of  sectional curvature $<-1$.
\end{proposition}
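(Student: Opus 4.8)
The plan is to smooth the hyperbolic cone metric locally near each conical singularity while keeping curvature strictly below $-1$ everywhere, then to control the distortion of the distance so that the smoothed metrics converge uniformly. Away from the finitely many cone points the metric is already smooth and hyperbolic, so no modification is needed there; the entire difficulty is concentrated in small disjoint disks $B(p_i,\rho)$ around each singular point $p_i$, where the total cone angle $\theta_i$ satisfies $\theta_i > 2\pi$ (the excess being negative curvature concentrated at $p_i$). So I would first fix disjoint geodesic disks $D_i$ around the $p_i$, each isometric to a hyperbolic cone of angle $\theta_i$, and reduce the problem to a purely local one: replace, inside a smaller disk, the singular cone metric by a smooth rotationally symmetric metric $g_\delta = d r^2 + f_\delta(r)^2\, d\varphi^2$ that agrees with the hyperbolic cone metric $d r^2 + (\tfrac{\theta_i}{2\pi}\sinh r)^2 d\varphi^2$ outside radius $\rho$, where $\varphi \in [0,2\pi)$ is the rescaled angular coordinate.

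Next I would carry out the construction of the warping function $f_\delta$. One needs $f_\delta(r) = \tfrac{\theta_i}{2\pi}\sinh r$ for $r \geq \rho$ (to $C^\infty$ order), $f_\delta$ smooth with $f_\delta(0)=0$, $f_\delta'(0)=1$, and all even derivatives of $f_\delta$ vanishing at $0$ so that $g_\delta$ extends smoothly across the center. The Gaussian curvature of such a metric is $-f_\delta''/f_\delta$, so I need $f_\delta'' \geq (1+\eta) f_\delta$ for some $\eta>0$ on all of $[0,\rho]$, i.e. $f_\delta$ must be more convex than $\sinh$. Since $\tfrac{\theta_i}{2\pi}>1$, at $r=\rho$ the model function $\tfrac{\theta_i}{2\pi}\sinh r$ lies strictly above $\sinh$ and has strictly larger derivative than $\sinh\rho$, which leaves room: one can interpolate between the germ of $\sinh r$ at $0$ and the germ of $\tfrac{\theta_i}{2\pi}\sinh r$ at $\rho$ by a function whose second derivative stays $\geq (1+\eta)f_\delta$, for instance by taking $f_\delta$ to be a smooth convex combination $\chi(r)\,\tfrac{\theta_i}{2\pi}\sinh r + (1-\chi(r))\sinh r$ with $\chi$ a suitable cutoff and then verifying the differential inequality (the combination of two functions both satisfying $\varphi''\geq \varphi$ strictly, plus controlling the $\chi',\chi''$ terms, which are small if the interpolation is done on a scale where the two functions are already close — this forces $\rho$ to be taken small). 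As $\delta = \rho \to 0$ the support of the modification shrinks.

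Then I would address the uniform convergence of the induced distances. Call $d_\rho$ the distance of the metric obtained by performing this surgery on all disks of radius $\rho$, and $d$ the cone distance. On the complement of $\bigcup D_i$ the metrics coincide, and inside each $D_i$ the smoothed metric can be arranged to be dominated by a fixed multiple of the cone metric (or one checks directly that the diameter of $D_i$ in either metric is $O(\rho)$ uniformly); hence any path can be modified near the $p_i$ at a length cost $O(\rho)$, giving $|d_\rho(x,y) - d(x,y)| \leq C\rho$ for all $x,y$, which is exactly uniform convergence with $f_n = \mathrm{id}$ (so $S_n = S$). Finally I would note that Proposition~\ref{smoothappr} combined with Theorem~\ref{approx} yields Theorem~\ref{thm:approx smooth}: first approximate the curvature $\leq -1$ metric by hyperbolic cone metrics of negative singular curvature, then approximate each of those by smooth metrics of curvature $<-1$, and use that a uniform limit of uniform limits can be extracted as a single uniformly convergent sequence by a diagonal argument. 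The main obstacle is the second step: building the warping function so that the curvature bound $<-1$ is preserved \emph{throughout} the transition region and not merely at the endpoints — this is where the hypothesis $\theta_i>2\pi$ (negative cone curvature) is essential and where one must be careful that the derivatives of the cutoff do not spoil the strict inequality, which is why the surgery must be localized at a sufficiently small scale.
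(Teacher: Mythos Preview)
Your route is intrinsic (warped-product surgery on each disk), whereas the paper's is extrinsic: it realizes the hyperbolic cone of angle $\theta_i>2\pi$ as the induced metric on a space-like circular cone in the affine model of $AdS^3$, replaces that cone near its apex by a smooth convex cap, and invokes the Gauss equation for space-like convex surfaces in a Lorentzian space form of curvature $-1$ to obtain intrinsic curvature $\leq -1$ with no ODE analysis at all; a global rescaling of the metric by a constant tending to $1$ then yields the strict inequality, and uniform convergence is read off from Proposition~\ref{convstruc}.

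Your construction, as written, has a genuine gap in the curvature estimate. With $c=\theta_i/2\pi>1$ and $f_\delta(r)=\bigl(1+(c-1)\chi(r)\bigr)\sinh r$ one computes directly
\[
f_\delta''-f_\delta \;=\;(c-1)\bigl(\chi''\sinh r+2\chi'\cosh r\bigr)\;=\;(c-1)\,\frac{\bigl(\chi'\sinh^2 r\bigr)'}{\sinh r}\,.
\]
For any smooth cutoff with $\chi'\equiv 0$ outside the transition interval, the function $\chi'\sinh^2 r$ vanishes at both ends of that interval and is positive in its interior, hence cannot be monotone; its derivative, and therefore $f_\delta''-f_\delta$, must change sign. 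Consequently the curvature $-f_\delta''/f_\delta$ is strictly \emph{greater} than $-1$ on part of the transition annulus, contrary to what you claim. (The parenthetical justification is also off: both $\sinh r$ and $c\sinh r$ satisfy $\varphi''=\varphi$ with equality, not strict inequality.) There is a second, independent gap: even if the surgery produced curvature $\leq -1$ on each $D_i$, equality would hold near $r=0$ and everywhere on $S\setminus\bigcup_i D_i$ (where the metric is still hyperbolic), so the strict bound $<-1$ required by the statement is never achieved; a final global rescaling by some $\lambda_n\nearrow 1$ is needed, and you do not mention it. The warped-product idea can be salvaged, but not via a convex combination of $\sinh r$ and $c\sinh r$; one must either prescribe a curvature profile and integrate the corresponding Jacobi equation, or adopt the paper's extrinsic smoothing, which sidesteps the sign issue entirely.
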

We use the same method  as that in \cite[Lemma 3.9]{DSth}, but we choose the cone in  anti-de Sitter space (Figure \ref{AdSCone2}), rather than the hyperbolic space $\mathbb{H}^3$.
\begin{proof}
Let $p\in S$ be a singular point of the polyhedral hyperbolic metric $d$. Consider a neighborhood  $U_p$ of $p$ in $S$ which doesn't contain any other singular point of $d$. As the curvature is supposed to be negative, the neighborhood $U_p$ equipped with the restriction of the metric $d$ will be isometric to the neighborhood  of a space-like circular cone $C_p$ in the affine model of the anti-de Sitter space,  such that the singularity $p$ corresponds to  the apex of $C_p$. 
Consider a sequence of smooth convex functions,  whose graphs coincide with the cone $C_p$  outside a neighborhood of the apex, and converging to $C_p$ (this is very classical, see e.g. 
\cite[Lemma 3.9]{DSth}).

Using Gauss formula, one can easily check that the sectional curvature for the induced metric on the smooth approximating surfaces is  $\leq -1$.  We  can multiply those metrics by any constant  $\lambda>1$ to get the sectional  curvature $<-1$. As the surfaces differ only on a compact set, and as the approximating sequence 
is smooth, it follows from \eqref{lengthstructure} that the induced distances are uniformly bi-Lipschitz to the hyperbolic metric. From this and Proposition~\ref{convstruc}, it is classical to deduce that  the induced distances converges locally uniformly (hence uniformly in this case), see e.g. the proof of Proposition 3.12 in \cite{FFDS}. 

The proposition follows by applying this procedure simultaneously  to all singular points of the metric $d$. 

 \end{proof}
 
\begin{center}

\begin{figure}[H]
\includegraphics[scale=0.15]{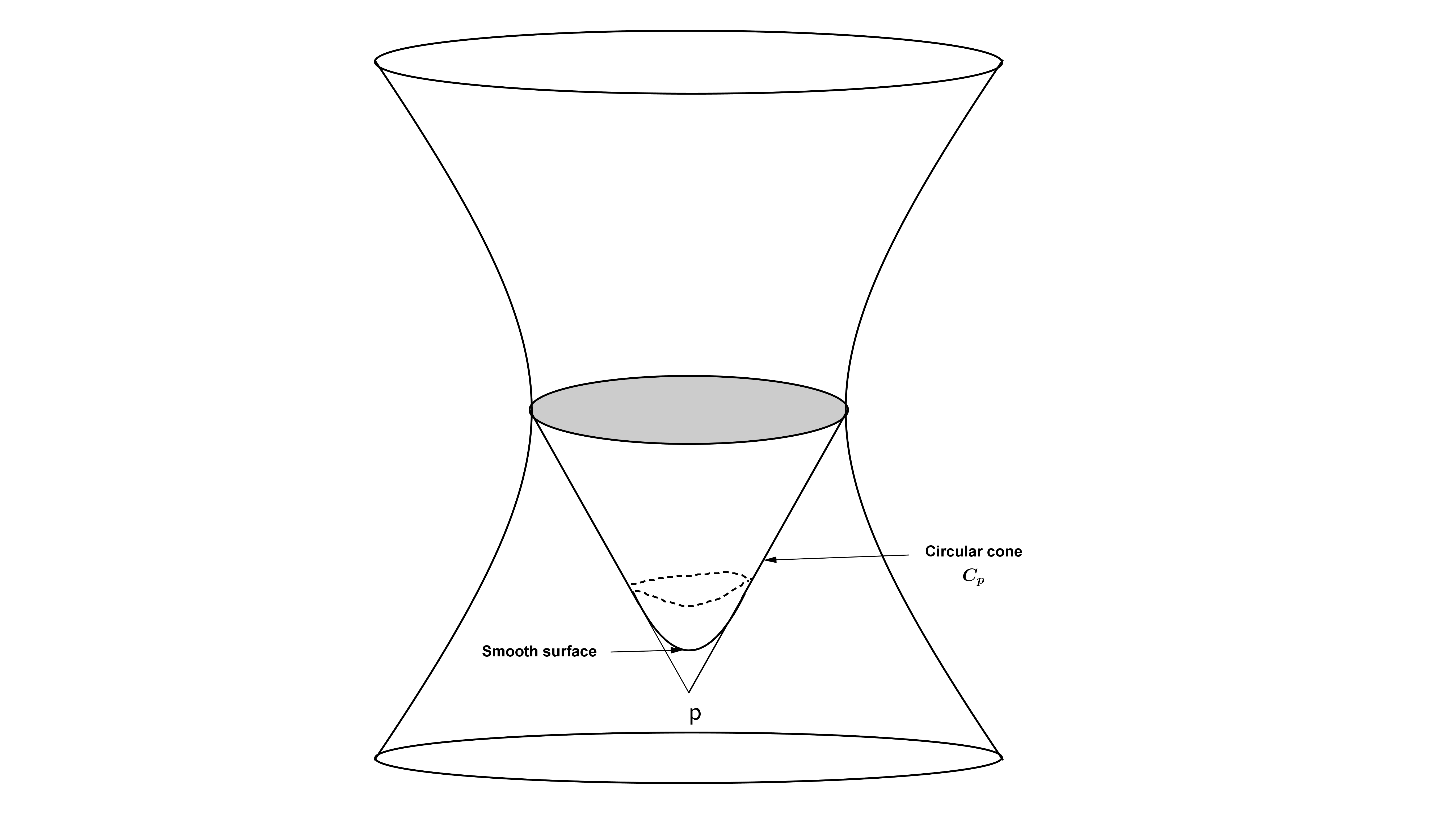}
\caption{Smooth surface and circular cone in Anti-de Sitter space.}\label{AdSCone2}
\end{figure}
 
 \end{center}

Let  $d$ be any metric of curvature $\leq -1$ on a compact surface $S$. 
We obtain a sequence $(d_n)_n$ from  Theorem~\ref{approx}, and for each $d_n$, 
a sequence $(d_{n_k})_k$ from   Proposition~\ref{smoothappr}.  Theorem~\ref{thm:approx smooth} follows from a diagonal argument.
 
\begin{footnotesize}
\nocite{}
\bibliography{arxiv}

\begin{thebibliography}{Mum71}

\bibitem[Ale06]{AL06}
A.~D. Alexandrov.
\newblock {\em A. {D}. {A}lexandrov selected works. {P}art {II}}.
\newblock Chapman \& Hall/CRC, Boca Raton, FL, 2006.
\newblock Intrinsic geometry of convex surfaces, Edited by S. S. Kutateladze,
  Translated from the Russian by S. Vakhrameyev.

\bibitem[AZ67]{Alek67}
A.~D. Aleksandrov and V.~A. Zalgaller.
\newblock {\em Intrinsic geometry of surfaces}.
\newblock Translated from the Russian by J. M. Danskin. Translations of
  Mathematical Monographs, Vol. 15. American Mathematical Society, Providence,
  R.I., 1967.

\bibitem[BBI01]{BurBur}
Dmitri Burago, Yuri Burago, and Sergei Ivanov.
\newblock {\em A course in metric geometry}, volume~33 of {\em Graduate Studies
  in Mathematics}.
\newblock American Mathematical Society, Providence, RI, 2001.

\bibitem[BH99]{BH99}
Martin~R. Bridson and Andr\'{e} Haefliger.
\newblock {\em Metric spaces of non-positive curvature}, volume 319 of {\em
  Grundlehren der Mathematischen Wissenschaften [Fundamental Principles of
  Mathematical Sciences]}.
\newblock Springer-Verlag, Berlin, 1999.

\bibitem[BS10]{BONSSCH10}
Francesco Bonsante and Jean-Marc Schlenker.
\newblock Maximal surfaces and the universal {T}eichm\"{u}ller space.
\newblock {\em Invent. Math.}, 182(2):279--333, 2010.

\bibitem[BS12]{BONSSCH}
Francesco Bonsante and Jean-Marc Schlenker.
\newblock Fixed points of compositions of earthquakes.
\newblock {\em Duke Math. J.}, 161(6):1011--1054, 2012.

\bibitem[Bur15]{Bur15}
Annegret~Y. Burtscher.
\newblock Length structures on manifolds with continuous {R}iemannian metrics.
\newblock {\em New York J. Math.}, 21:273--296, 2015.

\bibitem[Bus10]{buser}
Peter Buser.
\newblock {\em Geometry and spectra of compact {R}iemann surfaces}.
\newblock Modern Birkh\"{a}user Classics. Birkh\"{a}user Boston, Ltd., Boston,
  MA, 2010.
\newblock Reprint of the 1992 edition.

\bibitem[FIV16]{fgi}
Fran\c{c}ois Fillastre, Ivan Izmestiev, and Giona Veronelli.
\newblock Hyperbolization of cusps with convex boundary.
\newblock {\em Manuscripta Math.}, 150(3-4):475--492, 2016.

\bibitem[FS19]{FFDS}
Fran\c{c}ois Fillastre and Dmitriy Slutskiy.
\newblock Embeddings of non-positively curved compact surfaces in flat
  {L}orentzian manifolds.
\newblock {\em Math. Z.}, 291(1-2):149--178, 2019.

\bibitem[LS00]{LF00}
Fran\c{c}ois Labourie and Jean-Marc Schlenker.
\newblock Surfaces convexes fuchsiennes dans les espaces lorentziens \`a
  courbure constante.
\newblock {\em Math. Ann.}, 316(3):465--483, 2000.

\bibitem[Mes07]{Mess07}
Geoffrey Mess.
\newblock Lorentz spacetimes of constant curvature.
\newblock {\em Geom. Dedicata}, 126:3--45, 2007.

\bibitem[Mum71]{Mumf}
David Mumford.
\newblock A remark on {M}ahler's compactness theorem.
\newblock {\em Proc. Amer. Math. Soc.}, 28:289--294, 1971.

\bibitem[O'N83]{O'Neill83}
Barrett O'Neill.
\newblock {\em Semi-{R}iemannian geometry}, volume 103 of {\em Pure and Applied
  Mathematics}.
\newblock Academic Press, Inc. [Harcourt Brace Jovanovich, Publishers], New
  York, 1983.
\newblock With applications to relativity.

\bibitem[Roc97]{Roc97}
R.~Tyrrell Rockafellar.
\newblock {\em Convex analysis}.
\newblock Princeton Landmarks in Mathematics. Princeton University Press,
  Princeton, NJ, 1997.
\newblock Reprint of the 1970 original, Princeton Paperbacks.

\bibitem[Slu13]{DSth}
Dmitry Slutsky.
\newblock {\em Métriques polyèdrales sur les bords de variétés
  hyperboliques convexes et flexibilité des polyèdres hyperboliques.}
\newblock PhD thesis, Université Toulouse III Paul Sabatier, 2013.

\end{thebibliography}
\bibliographystyle{alpha}
\end{footnotesize}

 \vspace{1cm}
\noindent
Hicham Labeni \\
CY Cergy Paris Universit\'e \\
Laboratoire AGM \\
UMR 8088 du CNRS \\
F-95000 Cergy \\
France
\\ hicham.labeni@u-cergy.fr

\end{document}